\titleformat{\section}{\large\bfseries\filcenter}{\thesection}{1em}{}
\titleformat{\subsection}{\bfseries}{\thesubsection}{1em}{}
\patchcmd{\ttlh@hang}{\parindent\z@}{\parindent\z@\leavevmode}{}{}
\patchcmd{\ttlh@hang}{\noindent}{}{}{}
\newtheorem{thm}{Theorem}[section]
\newtheorem{cor}[thm]{Corollary}
\newtheorem{lemma}[thm]{Lemma}
\newtheorem{prop}[thm]{Proposition}
\newtheorem{ass}[thm]{Assumption}
\newtheorem{conj}[thm]{Conjecture}
\theoremstyle{remark}
\theoremstyle{definition}
\newtheorem{rmk}[thm]{Remark}
\newtheorem{defn}[thm]{Definition}
\newtheorem{exa}[thm]{Example}
\newtheorem{notation}[thm]{Notation}% and Conventions}
\numberwithin{equation}{section}
\def\beq{\begin{equation}}
\def\eeq{\end{equation}}
\def\beqn{\begin{equation*}}
\def\eeqn{\end{equation*}}
\def\ben{\begin{enumerate}}
\def\een{\end{enumerate}}
\renewcommand\thanks[1]{%
  \begingroup
  \renewcommand\thefootnote{}\footnote{#1}%
  \addtocounter{footnote}{-1}%
  \endgroup
}
\renewcommand{\tilde}{\widetilde}
\renewcommand{\epsilon}{{\varepsilon}}
\def\stg{{\texttt{STG}}}
\def\aa{{\mathfrak a}}
\def\bb{{\mathfrak b}}
\def\bm{{\mathbf m}}
\def\0{{(0,0)}}
\def\C{{\mathbb C}}
\def\K{{\mathbb K}}
\def\Q{{\mathbb Q}}
\def\F{{\mathbb F}}
\def\N{{\mathbb N}}
\renewcommand\P{{\mathbb P}}
\def\R{{\mathbb R}}
\def\T{{\mathbb T}}
\def\Z{{\mathbb Z}}
\def\bS{\mathbb{S}}
\def\cf{\emph{cf.}~}
\def\ie{\emph{i.e.}~}
\def\lc{\emph{loc.cit.}}
\def\eg{\emph{e.g.}~}
\def\cA{{\mathcal A}}
\def\cE{{\mathcal E}}
\def\cF{{\mathcal F}}
\def\cG{{\mathcal G}}
\def\cH{{\mathcal H}}
\def\cO{{\mathcal O}}
\def\cR{{\mathcal R}}
\def\cS{{\mathcal S}}
\def\cV{{\mathcal V}}
\def\cW{{\mathcal W}}
\def\bH{{\mathbf H}}
\def\bM{{\mathbf M}}
\def\bP{{\mathbf P}}
\def\bR{{\mathbf R}}
\def\fA{{\mathfrak A}}
\def\a{\alpha}
\def\Sp{{\rm Sp \,}}
\def\SO{{\rm SO \,}}
\def\GL{{\rm GL \,}}
\def\ol{\overline}
\def\Id{{\rm Id\,}}
\def\Inv{{\rm Inv\,}}
\def\Sl{{\rm SL\,}}
\def\Aut{{\rm Aut\,}}
\def\Hom{{\rm Hom\,}}
\def\tors{{\rm tors\,}}
\def\free{{\rm free\,}}
\renewcommand{\i}{{\imath}}
\def\Man{{\text{Man}}}
\def\stgAlg{{\stg\text{-Alg}}}
\def\lt{\langle}
\def\gt{\rangle}
\def\bSymp{{\mathbf{Symp}}}
\def\void{{\rm \varnothing}}
\def\then{\Rightarrow}
\def\limpro{\mathop{\lim\limits_{\displaystyle\leftarrow}}}
\begin{document}

\begin{flushright}

\baselineskip=4pt

\end{flushright}

\begin{center}
\vspace{5mm}

{\Large\bf ON THE UNIQUENESS OF INVARIANT STATES}

%\thanks{}

\vspace{5mm}

{\bf by}

\vspace{5mm}

 { \bf Federico Bambozzi}\\[1mm]
\noindent  {\it Mathematical Institute, Radcliffe Observatory Quarter, Woodstock
Road, Oxford OX2 6GG, UK}\\[1mm]
email: \ {\tt bambozzif@maths.ox.ac.uk}
\\[6mm]
{  \bf Simone Murro}\\[1mm]
\noindent  {\it Mathematisches Institut, }{\it Universit\"at Freiburg, } {\it D-79104 Freiburg, Germany}\\[1mm]
email: \ {\tt simone.murro@math.uni-freiburg.de}
\\[8mm]
\today
\\[10mm]
\end{center}

\begin{abstract}
Given an abelian group $\cG$ endowed with a $\T=\R/\Z$-pre-symplectic form, we assign to it a symplectic twisted group $*$-algebra $\cW_\cG$ and then we provide criteria for the uniqueness of states invariant under the ergodic action of the symplectic group of automorphism.  As an application, we discuss the notion of natural states in quantum abelian Chern-Simons theory. 
\end{abstract}

\paragraph*{Keywords:} twisted group algebra, invariant states, symplectic group.
\paragraph*{MSC 2010:} Primary: 46L30, 22D15; Secondary: 46L55,  47A35, 11E57. 
\\[0.5mm]

\tableofcontents

\renewcommand{\thefootnote}{\arabic{footnote}}
\setcounter{footnote}{0}

\section{Introduction}

The study of groups of automorphisms
of operator algebras and states invariant
under the action of the group automorphisms has been and continues to be at the forefront of mathematical research.
For sake of completeness let us recall two motivations that led us to study invariant states.

\paragraph*{Invariant states in statistical mechanics.}
Most applications of the algebraic approach to statistical mechanics 
have focused on equilibrium phenomena and have aimed to
justify the interpretation of equilibrium states as states over a $C^*$-algebra. 
During the years, two different but related type of analysis has been developed. The first one begins with a specific Hamiltonian operator $H_V$ which incorporates a description of the interactions and boundary
conditions for particles in a finite region $V$ and then tries to construct the
Gibbs equilibrium state
$$\omega_\beta:=\lim_{V\to\infty} \omega_{V,\beta}(\aa)=\frac{\textnormal{Tr}(e^{-\beta H_V}\aa)}{\textnormal{Tr} e^{-\beta H_V}}$$
where $\beta$ is the inverse temperature of the system. Clearly, for this construction to work, we need $H_V$ to be a self-adjoint operator and $e^{-\beta H_V}$ has to be trace-class, a condition which can often be established in explicit models. Unfortunately, this type of analysis is not well-suited for a quantum field propagating in a
given gravitational background field. In these cases one immediately encounters three well-known
problems: in a general curved spacetime there is no clear notion of particle, no clear choice of a
Hamiltonian operator and, even if there were,
the operator $e^{-\beta H_V}$ is not trace-class. 
In the second type of analysis, the starting point is a $C^*$-dynamical system, namely a $C^*$-algebra $\fA$ together with a one-parameter group of $*$-automorphisms $ t\in G \mapsto \Phi_t$ on $\fA$. From a physical perspective,
the algebra $\fA$ is interpreted as the algebra of observables and $\Phi_t$ describes the time evolution of the system. In this setting, an equilibrium state is defined to be any linear, positive and normalized functional $\omega:\fA\to\C$ which is invariant under the action of $\Phi_t$, namely 
$$ \Phi_t^*(\omega):= \omega \circ \Phi_t = \omega   \qquad\text{for any $t\in G$\,.} $$
%In the GNS-representation space of an equilibrium state $\omega$, the time evolution $\Phi_t$ is implemented by a unitary group $U_t$. If the group $U_t$ is strongly continuous, it has a self-adjoint generator by Stone’s
%Theorem, so we may write $U_t = e^{\imath t H}$, where the self-adjoint operator $H$ is
%dubbed Hamiltonian.  
These two types of analysis are not unrelated: As noticed by Kubo~\cite{Kubo} and later by Martin and
Schwinger \cite{MaSc}, Gibbs states can be rewritten as an equilibrium state for some suitable $C^*$-dynamical system and they can be fully characterized by a property now known as \emph{KMS$_\beta$ condition}. This property was proposed in~\cite{HHW} as a defining characteristic for thermal equilibrium states, even when the Gibbs state is no longer
defined, and it clearly depends on a parameter $\beta$ which again represents the inverse temperature. Let us remark, that KMS-states are in general not unique. For interesting systems with interaction, one expects that for large temperatures (\textit{i.e. }small $\beta$), the disorder will be predominant so that there will exist
only one KMS-state. Instead, for low enough temperatures (\textit{i.e. }large $\beta$), some order should set in and allow for the existence of various thermodynamical phases, namely various KMS-states.  
 For further details on Gibbs and KMS states, we refer to~\cite{OA1,AQFT2} and we also recommend the recent review~\cite{KO}. 

\paragraph*{Invariant states and ergodic theory.}
In its simplest form, the classical pointwise ergodic theorem of Birkhoff asserts that if $(X,\Sigma, \mu)$ is a probability measure space, $\Phi$ is an
invertible measure-preserving transformation of $X$ and $f$ is an integrable complex valued function on $X$, then the averages
$$s_n(f)=\frac{1}{n}\sum_{j=0}^{n-1} \Phi^j f$$
converge almost everywhere to an $\Phi$-invariant function $\hat{f}$ (where $\Phi f$ is the function defined by $\Phi f(x)=f(\Phi (x))$).
If we restrict ourself to the case where $f$ is
bounded, we are dealing with an element of the commutative von Neumann algebra $\fA=L^\infty (X,\mu)$, and $\Phi$ gives rise to an automorphism of $\fA$. So the Birkhoff's ergodic theorem can be stated in terms of von Neumann algebra: If $T$ is an automorphism of a commutative von Neumann algebra $\fA$ and there exists a faithful $\Phi$-invariant normal state of $\fA$ then for any $\aa$ in $\fA$ the averages $s_n(\aa)$ converge ``almost uniformly'' to a $\Phi$-invariant element $\hat{\aa}$. The generalization to noncommutative von Neumann algebras was initiated by Kov\'acs and Sz\"ucs in~\cite{KovacsSzucs} and then completed by Lance in~\cite{Lance}. It is clear that the analysis of invariant (normal) states over a $*$-algebra is of crucial importance for noncommutative ergodic theory. For further details on operator algebras and ergodic theory, we refer to\cite{ergodicbook,NesSto}. \\

Despite being so important, having a general criterion for the uniqueness of invariant states seems still to be too far reaching. However, nice and profound results can be obtained if the action of the group automorphisms is ergodic, \textit{i.e.}  the fixed point algebra is trivial.
 In \cite{KovacsSzucs} it was shown that if the action of a group of automorphism $\cE$ on a von Neumann algebra $\cR$ is ergodic then there exists a unique $\cE$-invariant state $\omega$ over $\cR$.
 Ten years later, St\o rmer showed in~\cite{Stormer} that, under the additional assumption that $\mathcal{E}$ is a locally compact abelian group, the unique $\mathcal{E}$-invariant state is a tracial state. 
 For several years it has been an open problem if the same results hold with weaker assumptions (\cf \cite{OPK}) and a positive answer was given in \cite{HLS}, where it was shown that if $\mathcal{E}$ is a compact ergodic group of automorphism acting on an unital $C^*$-algebra, the unique $\mathcal{E}$-invariant state is a trace. 
 Let us underline that the existence and uniqueness of $\mathcal{E}$-invariant states on a $C^*$-algebra (which is not necessary a Von Neumann algebra) is guaranteed by the fact that $\mathcal{E}$ is compact, see \cite{stormer1}.
 
 In most of the models inspired by mathematical physics, the group of $*$-automorphisms is not compact nor abelian, even if it acts ergodically. For example in abelian Chern-Simons theory (as studied in~\cite{DMS}) the automorphisms group $\mathcal{E}$ coincides with the symplectic group of automorphisms. Hence, a general criterion for the existence and uniqueness of invariant states is still missing. The first step in this direction was done in \cite{BMP}, where invariant states by the symplectic group of automorphisms on group algebras with involutions that define irrational non-commutative tori have been classified using elementary, and mostly algebraic, methods. More precisely, it was shown that for irrational rotational algebras the only state invariant under the action of the symplectic group is the canonical trace state. These $*$-algebras are obtained as twisted group algebras for the abelian groups $\Z^{2n}$ equipped with a symplectic form and the action of the symplectic group $\Sp(2n,\Z)$ on $\Z^{2n}$ can be lifted to an ergodic group of $*$-automorphisms.
     
The aim of this paper is to generalize the result of~\cite{BMP} to a broader class symplectic twisted group algebras. To this end, in Section \ref{Sec: SAB} we introduce the abstract notion of a $\cR$-pre-symplectic abelian group, \ie an abelian group equipped with a $\cR$-pre-symplectic form, being $\cR$ a fixed abelian group where the pre-symplectic form takes values. This notion is considered for encompassing in a single concept different examples of symplectic forms arising in different contexts. 
From Section~\ref{sec:group algebra}, we will restrict our attention to the case $\cR = \R/\Z$ (that we denote $\T$). After recalling how to assign a twisted group $*$-algebra $\cW_\cG$ to a $\T$-pre-symplectic abelian group $(\cG, \sigma)$, where $\sigma$ denotes  a fixed pre-symplectic form, in Section~\ref{sec:invariant states} we study $\Sp(\cG)$-invariant states on $\cW_\cG$ (where $\Sp(\cG)$ is the symplectic group of $\cG$) for some specific $\cG$. A summary of the main results obtained is the following (see also Theorem \ref{thm:sum_up}):
\begin{thm} \label{thm:introduction}
Let $(\cG, \sigma)$ be a $\T$-pre-symplectic abelian group, then
\begin{enumerate}
\item if $\sigma$ is degenerate, then $\cW_\cG$ admits plenty of $\Sp(\cG)$-invariant states;
\item if $(\cG, \sigma)$ is symplectic, irrational (in the sense of Definition \ref{defn:irrational}), the symplectic form is diagonalizable (in the sense of Notation \ref{not: diagonalization}) and $(\cG, \sigma)$ satisfies a technical assumption (specified in Theorem \ref{thm:NCtori_no_torsion}), then $\cG$ is torsion-free and the canonical trace is the unique $\Sp(\cG)$-invariant state on $\cW_\cG$; %\simo{se dassimo un nome all'assunzione tecnica ?}
\item  if $(\cG, \sigma) \cong \bigoplus_{i \in I} ((\Z/n\Z)^2, \sigma_2)$, where $I$ has infinite cardinality and $\sigma_2$ is the canonical symplectic form, then the canonical trace is the unique $\Sp(\cG)$-invariant state on $\cW_\cG$.
\end{enumerate}
\end{thm}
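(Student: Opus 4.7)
The three cases are distinct in nature and I would handle them separately. For (1), the radical $\cG^\perp := \{g \in \cG : \sigma(g,\cdot) \equiv 0\}$ is non-trivial by degeneracy and stable under $\Sp(\cG)$. Since $\sigma$ vanishes on $\cG^\perp$, the $*$-subalgebra $\cB \subset \cW_\cG$ generated by $\{W_g : g \in \cG^\perp\}$ is commutative, naturally isomorphic to the group $*$-algebra of $\cG^\perp$ and hence dual to the measures on $\widehat{\cG^\perp}$. The canonical trace yields an $\Sp(\cG)$-equivariant conditional expectation $E : \cW_\cG \to \cB$ killing $W_g$ for $g \notin \cG^\perp$, so pulling back along $E$ any $\Sp(\cG)$-invariant probability measure on $\widehat{\cG^\perp}$ produces an invariant state on $\cW_\cG$. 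Since at least the Dirac mass at the trivial character and the Haar measure on $\widehat{\cG^\perp}$ are both invariant and distinct, plenty of invariant states arise.

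For (2), invariance of the canonical trace $\tau(W_g)=\delta_{g,0}$ is automatic; only uniqueness is at stake. I would first prove that $\cG$ is torsion-free by contradiction: a torsion element would, through diagonalizability, impose a finite-order scalar relation among Weyl unitaries on a hyperbolic summand, contradicting irrationality of the symplectic pairing there. For uniqueness of $\tau$, the aim is to show $\omega(W_g)=0$ for every non-zero $g\in\cG$. Following the strategy of~\cite{BMP}, one constructs, for each such $g$, an element $a = \sum_{k=1}^N W_{A_k g}$ with $A_k\in\Sp(\cG)$ landing on distinct orbit points of $g$, so that $\omega(a) = N\omega(W_g)$; the Cauchy--Schwarz inequality $|\omega(a)|^2 \leq \omega(a^* a)$ together with the cancellations in $\omega(a^* a)$ guaranteed by irrationality, diagonalizability and the technical assumption then forces $\omega(W_g)=0$ as $N\to\infty$.

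For (3), invariance of $\tau$ is again clear. The symplectic group $\Sp(\cG)$ contains the permutations of the index set $I$ (which act symplectically because all summands are identical copies of $((\Z/n\Z)^2,\sigma_2)$) as well as the component-wise $\Sp(2,\Z/n\Z)$-action. Given a non-zero $g$ with finite support $S\subset I$, the infinite cardinality of $I$ yields arbitrarily many pairwise disjoint permutation-translates $g^{(k)}$ of $g$. Iterating the positivity-and-orbit argument of (2) with these translates forces $\omega(W_g)=0$; here the disjointness of supports, combined with the non-degeneracy of $\sigma_2$ on each summand, makes the cancellations in the estimate automatic.

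The main obstacle is part (2): identifying the technical assumption on $(\cG,\sigma)$ under which the averaging argument succeeds for every non-zero $g$, and verifying that the hyperbolic decomposition is compatible with irrationality in the required way, is the heart of the matter. Parts (1) and (3) are more direct applications of symmetry once the algebraic setup of $\cW_\cG$ is in place.
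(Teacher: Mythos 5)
Your part (1) is correct but takes a genuinely different route from the paper's. The paper (Proposition \ref{thm:CTori} together with Corollary \ref{cor:degenerate sympl. form}) simply writes the states down explicitly --- $\omega(1_\cG)=1$, $\omega(g)=q\in(0,1)$ for $g\in\cG^\perp\setminus\{1_\cG\}$, $\omega(g)=0$ otherwise --- and verifies positivity by computing eigenvalues of the associated Gram matrices; you instead pull back invariant measures on $\widehat{\cG^\perp}$ along a conditional expectation. Your route is more structural and recovers the paper's family as convex combinations of the Dirac and Haar pullbacks, but you should justify positivity of $E$ at the purely algebraic level: decomposing $a=\sum_c a_c$ over cosets $c$ of $\cG^\perp$ and writing $a_c=W_{g_c}b_c$ with $b_c\in\cB$ (legitimate because $\Omega(g_c,k)=1$ for $k\in\cG^\perp$) gives $E(a^*a)=\sum_c b_c^*b_c$, from which positivity and $\Sp(\cG)$-equivariance follow.

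For part (2) you candidly leave the core open, and the paper's mechanism is in fact different from a global averaging estimate. The paper embeds $\cG$ in its injective envelope $\cG\otimes_\Z\Q$, uses Lemma \ref{lemma:unique_hyperbolic_plane} (a rank-$2$ $\T$-symplectic group carries the form $r\sigma_2$) together with the rank-$2$ irrational case (Lemma \ref{lemma:NCtori_no_Q2}, quoting \cite{BMP}), and the unnamed technical assumption is precisely $\Sp(\cG)\otimes_\Z\Q\supset \Sp^H(\cG\otimes_\Z\Q)$ (Definitions \ref{defn:split_hyperbolic_plane} and \ref{defn:plane_automorphisms}): it guarantees that every $\Sp(\cG)$-invariant state restricts to an invariant state on $\cW_{\cG\cap\cH}$ for each split hyperbolic plane $\cH$, where it must be the trace. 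Your Cauchy--Schwarz scheme leaves the cross terms $\omega(W_h)$, $h=(A_kg)^{-1}A_lg$, uncontrolled and cannot close without exploiting the twisting phases. Also, torsion-freeness needs no diagonalizability: if $x^n=1_\cG$ then $n\sigma(x,y)=0$ for all $y$, so $\sigma(x,y)$ lies in $\Q/\Z$, which irrationality forces to vanish, contradicting non-degeneracy.

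Part (3) contains a genuine error. If the translates $g^{(k)}$ have pairwise disjoint supports, then $\sigma(g^{(k)},g^{(l)})=0$ for $k\ne l$, all cross phases equal $1$, and the Gram matrix of $\big(W_{g^{(k)}}\big)_{k\le N}$ under a state constant $=q$ on nontrivial elements has unit diagonal and all off-diagonal entries $q$. That matrix is positive semidefinite for every $q\in[0,1]$ --- this is exactly the computation by which Proposition \ref{thm:CTori} produces many invariant states on the \emph{untwisted} algebra --- so your estimate only yields $q\in[0,1]$, never $q=0$: disjointness \emph{removes} the phases rather than making cancellations automatic. The paper's Theorem \ref{thm:infinite_torsion} does the opposite: it solves $\sigma(e_i,e_j)=e^{2\pi\i k/p}$ (a linear system over $\F_p$, solvable since $\cG$ is infinite-dimensional over $\F_p$) to realize, for every $k$, Gram matrices $\bM_{np}(q,\,q e^{2\pi\i k/p})$ with genuinely complex off-diagonal entries, then averages over $k$ to obtain $\bM_{np}(q,0)$, whose determinant $1-(np-1)q^2$ is negative for large $n$ unless $q=0$; since a convex combination of positive matrices is positive, this forces $q=0$. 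You would also need the one-orbit statement (Lemma \ref{lemma:infinite_torsion_orbits}) to know the state is constant off $1_\cG$, and the primary decomposition $\Sp(\bigoplus_{p\in\P}\cG_p)\cong\prod_{p\in\P}\Sp(\cG_p)$ (Corollaries \ref{cor:direct_sum_torsion} and \ref{cor:infinite_torsion}) to pass from $p^f$ to general $n$.
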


Notice that in point 2. of Theorem \ref{thm:introduction} it is not assumed that $\cG$ is finitely generated (otherwise the theorem reduces to the result of \cite{BMP}) and neither that it is a free module over $\Z$. We conclude the paper with some conjectures and by giving an application of our results to abelian Chern-Simons theory.

\subsection*{{Acknowledgements}}

We would like to thank Sebastiano Carpi, Claudio Dappiaggi, Nicol\'o Drago, Francesco Fidaleo, Kobi Kremnizer and Vincenzo Morinelli for helpful discussions related to the topic of this paper. We are grateful to the referees for useful comments on the manuscript.

\subsection*{{Funding}}
F. B. is supported the DFG research grant BA 6560/1-1 ``Derived geometry and arithmetic'' and he thanks the Mathematical Institute of the University of Freiburg for the kind hospitality during the preparation of this work as well as the DFG GRK 1821 ``Cohomological Methods in Geometry''   for supporting this stay. S. M. is supported by the research grant ``Geometric boundary value problems for the Dirac operator'' and partially supported by the DFG research training group GRK 1821 ``Cohomological Methods in Geometry''.

\subsection*{Notation and conventions}
\begin{itemize}
\item[-] The set of prime numbers is denoted by $\P$.
\item[-] For any $p\in \P$,  we denote with $\F_p$ the finite field $ \Z / p \Z$, with $\Q_p$ the field of $p$-adic numbers.
\item[-] The symbol $\K$ denotes one of the elements of the set $\{\Z,\Q,\R,\C,\F_p,\Q_p\}$.
%\item[-] With $\Z_{m_1, \dots, m_n}$ we denote the cyclic group $(\Z/m_1\Z) \times \dots   \times (\Z/m_n\Z)$.
\item[-] With $\T$ we denote the torus $\R/\Z$.
\item[-] $\cG$ and $\cR$ denote abelian groups.
\item[-] $\sigma_{2 n}$ denotes the \emph{canonical symplectic form} as defined in Example \ref{exa:prototypical}.
\end{itemize}

\section{Symplectic abelian groups}\label{Sec: SAB}

A \emph{$\cR$-symplectic form} on $\cG$ is a map of abelian groups $\sigma: \cG \times \cG \to \cR$ that satisfies the following properties:
\begin{enumerate}
\item[(I)] \emph{Bilinearity:} For any $x,y,x',y' \in \cG$, it holds that
$$\sigma(x + y, x' + y') = \sigma(x, x') + \sigma(x, y') + \sigma(y, x') + \sigma(y, y').$$ 
\item[(II)] \emph{Skew-symmetricity:} For all $x \in \cG$, it holds that $\sigma(x,x) = 0$.
\item[(III)] \emph{Non-degeneracy}:   If $\sigma(x,y) = 0$ for all $y \in \cG$, then $x = 0$.
\end{enumerate}

If we drop the requirement that $\sigma$ is non-degenerate we say that the form is \emph{$\cR$-pre-symplectic}.
By denoting with $0_\cG$ the unit in $\cG$, it is easy to see that for all $x,y\in\cG$, a $\cR$-pre-symplectic form satisfy the relations 
$$ \sigma(0_\cG, x) = 0, \qquad \sigma(x, y) = - \sigma(y, x)\,.$$

\begin{defn} \label{defn:symplectic_abelian_group}
A \emph{$\cR$-(pre-)symplectic abelian group} is a pair consisting of an abelian group $\cG$ together with a $\cR$-(pre-)symplectic form $\sigma:\cG\times\cG\to\cR$.
\end{defn}

\begin{rmk} \label{rmk:quasi_symplectic_groups}
We remark that in the case when $\cG$ is a  discrete abelian group and $\cR=\T$, our definition of $\cR$-symplectic abelian group coincides with the definition of  \emph{quasi-symplectic spaces} given in~\cite{LP}.
\end{rmk}

Clearly, Definition~\ref{defn:symplectic_abelian_group} is not the most general one. Namely, it is possible to generalize it  by considering pairs consisting of a module ${\tt M}$ over a base ring ${\tt R}$ and a skew-symmetric, non-degenerate ${\tt R}$-bilinear form on ${\tt M}$ with values on a fixed ${\tt R}$-module. We are not interested in developing this version of the theory in this work, although it can be done by easily adapting our discussion.
Before proving some properties of $\cR$-symplectic abelian groups, we give some examples which are present in the literature.

\begin{exa} \label{exa:prototypical}
The prototypical example of a $\cR$-symplectic abelian group is obtained by endowing $\K^{2 n}$ with the standard $\K$-symplectic form $\sigma: \K^{2 n}\times \K^{2 n} \to \K$ (in this case $\cR = \K$ as an abelian group). After fixing a base for $\K^{2n}$ and the canonical scalar product $<\cdot\,,\cdot>_{\K^{2n}}$, the $\K$-symplectic forms read
$$ \sigma(x,y)=<x,\sigma_{2n} \,y>_{\K^{2n}}
\qquad \text{with}\qquad
\sigma_{2n}=\begin{pmatrix}
0 & \Id_n \\
-\Id_n & 0
\end{pmatrix}
$$
and $\Id_n$ the $n\times n$ identity matrix. The $\K$-symplectic abelian groups have been used in several topics, e.g. for $\K=\Z$ in noncommutative geometry~\cite{BMP,DegliEspositi} and  in abelian Chern-Simons theory~\cite{DMS}, for $\K=\R,\C$ in quantum mechanics~ \cite{AS,FV}, in symplectic geometry and deformation quantization~\cite{defquant}, for $\K=\F_p$ in modal quantum theory~\cite{SW1,SW2} and  for $\K=\Q_p$ in $p$-adic quantum mechanics~\cite{Zel}. 
\end{exa}

\begin{notation}
With a slight abuse of notation, we refer to $\sigma_{2n}$ as the \emph{canonical symplectic form}.
\end{notation}

Arithmetic geometry is another source of examples of symplectic abelian groups.

\begin{exa} \label{exa:weil_pairing}
The Tate modules associated to an elliptic curve equipped with the \emph{Weil pairing} are $\T$-symplectic abelian groups. Explicitly, let $E$ be an elliptic curve defined over $\C$ and let $l \in \N$. The $l$-torsion points of $E$ are denoted by $E[l]$ and it is well known that $E[l] \cong (\Z/l\Z)^2$. The Weil pairing on $E[l]$ is defined by the map
\[ e_l: E[l] \times E[l] \to \T \]
given by
\[ { e_l((a,b),(c,d)) = e^{2 \pi \i \frac{(ad - bc)}{l}}}. \]
It is easy to check that $e_l$ is bilinear, non-degenerate and skew-symmetric. Notice also that $e_{m l}$ is compatible with $e_l$, for any $m \in \N$, in the sense that 
\[ e_{m l}((a,b),(c,d)) = e_l ((m a, m b),(c, d)) = e_l ((a, b),(m c, m d)). \]
Hence, fixing a prime $p$, one can pass to the limit and obtain a Weil paring
\[ e_p:  \limpro_{n} E[p^n] \times \limpro_{n} E[p^n] \to \T \]
on the Tate modules $T_p(E) = \underset{n}\limpro E[p^n]$.
\end{exa}

As we shall see in the next example, not every symplectic abelian group appearing in natural examples is finite-dimensional.
\begin{exa} \label{exa:QFT}
Consider a real (resp. complex) vector bundle $E$ over an oriented manifold $M$ and denote the space of compactly supported sections with $\Gamma_c(E)$.
Then  $\big(\Gamma_c(E) \oplus \Gamma_c(E),\sigma\big)$  forms a (resp. $\C$-)$\R$-symplectic abelian group, where $\sigma$ is given by 
\begin{equation}\label{eq:sigma_QFT}
\sigma(f,g):=\frac{\int_M (f ^T \sigma_{2n}\,\, g )(x) \,  vol_M}{\int_M vol_M}
\end{equation}
being $\sigma_{2n}$ the canonical symplectic form and  $vol_M$ the volume form of $M$. These  symplectic abelian groups  have been intensively used to quantize field theory on Lorentzian manifolds -- see e.g.~\cite{BDH, gerard,FR16,ThomasAlex,MurroVolpe,simogravity,
simo3,simo2,simo1,simo4} for reviews or textbooks.
\end{exa}

By defining a \emph{morphism} $\phi: (\cG_1, \sigma_{\cG_1}) \to (\cG_2, \sigma_{\cG_2})$ to be a group homomorphism of the underlying groups that preserves the values of the (pre-)symplectic forms, the class of $\cR$-(pre-)symplectic abelian groups forms a category. We make this concept more precise in the next definition.

\begin{defn} \label{defn:cateogry_symplectic_groups}
    We denote by $\bSymp_\cR$ (resp. $\bP\bSymp_\cR$) the category whose objects are $\cR$-symplectic (resp. $\cR$-pre-symplectic) abelian groups $(\cG_i, \sigma_{\cG_i})$ and whose morphisms are group homomorphisms $\phi: \cG_1 \to \cG_2$ for which the following diagram commutes
\[
\begin{tikzpicture}
\matrix(m)[matrix of math nodes,
row sep=2.6em, column sep=2.8em,
text height=1.5ex, text depth=0.25ex]
{ \cG_1 \otimes_\Z \cG_1   & \cR   \\
  \cG_2 \otimes_\Z \cG_2   &  \\};
\path[->,font=\scriptsize]
(m-1-1) edge node[left] {$\phi \otimes \phi$} (m-2-1);
\path[->,font=\scriptsize]
(m-1-1) edge node[auto] {$\sigma_{\cG_1}$} (m-1-2);
\path[->,font=\scriptsize]
(m-2-1) edge node[right] {$\sigma_{\cG_2}$} (m-1-2);
\end{tikzpicture}.
\]
\end{defn}

\begin{rmk}
The category of abelian groups fully faithfully embeds in $\bP\bSymp_\cR$ via the functor that associates to an abelian group $\cG$ the $\cR$-pre-symplectic abelian  group $(\cG, \sigma_0)$ where $\sigma_0$ is the trivial pre-symplectic form that  has value identically $0$, nevertheless $\bP\bSymp_\cR$ is not additive because the sum of two morphisms does not  preserve the values of the pre-symplectic form, in general.
\end{rmk}

In analogy with symplectic geometry, for any subset $A \subset \cG$ of a $\cR$-pre-symplectic abelian group we define the \emph{orthogonal} subset as
\[ A^\perp = \{ x \in \cG | \sigma(x, a) = 0, \forall a \in A \}. \]

We will use the following simple construction for producing new $\cR$-pre-symplectic abelian groups from known ones.

\begin{prop} \label{prop:direct_sum}
    Let $\{ (\cG_i, \sigma_i) \}_{i \in I}$ be a (small) family of objects of $\bP\bSymp_\cR$. Then, the direct sum (\ie coproduct) of this family is the $\cR$-pre-symplectic abelian group given by
    \[ \cG = \bigoplus_{i \in I}(\cG_i, \sigma_{\cG_i}) = \Big(\bigoplus_{i \in I} \cG_i, \sum_{i \in I} \sigma_{\cG_i} \Big). \]
    If all $\{ (\cG_i, \sigma_{\cG_i}) \}_{i \in I}$ are $\cR$-symplectic abelian groups then also the direct sum is a $\cR$-symplectic abelian group.
\end{prop}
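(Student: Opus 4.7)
The plan is to verify that $\bigl(\bigoplus_{i \in I} \cG_i, \sum_{i \in I} \sigma_{\cG_i}\bigr)$ is an $\cR$-pre-symplectic abelian group by checking the three defining axioms, then to upgrade this to the symplectic case, and finally to justify the identification with the coproduct.

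First I would check that the form $\sigma := \sum_{i \in I} \sigma_{\cG_i}$ is well-defined. By definition of the direct sum, every element $x = (x_i)_{i \in I} \in \bigoplus_{i \in I} \cG_i$ has $x_i = 0_{\cG_i}$ for all but finitely many $i \in I$. Combined with the identity $\sigma_{\cG_i}(0_{\cG_i}, y_i) = 0$ (observed right after the definition of pre-symplectic form), this shows that for any pair $x, y \in \bigoplus_{i \in I} \cG_i$ only finitely many summands $\sigma_{\cG_i}(x_i, y_i)$ can be nonzero, so $\sigma(x, y) \in \cR$ is a finite sum. Bilinearity and skew-symmetry are then inherited componentwise: each $\sigma_{\cG_i}$ is bilinear, hence so is the finite sum; and $\sigma(x,x) = \sum_i \sigma_{\cG_i}(x_i, x_i) = 0$ by skew-symmetry of each factor.

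Next I would prove non-degeneracy when all $\sigma_{\cG_i}$ are non-degenerate. Fix $x = (x_i)_{i \in I}$ with $\sigma(x,y) = 0$ for every $y \in \bigoplus_{i \in I} \cG_i$. For any $j \in I$ and any $y_j \in \cG_j$, evaluating $\sigma$ against the element $y = \iota_j(y_j)$ whose only nonzero component is the $j$-th, equal to $y_j$, collapses the sum to $\sigma_{\cG_j}(x_j, y_j) = 0$. Since $y_j$ is arbitrary, non-degeneracy of $\sigma_{\cG_j}$ forces $x_j = 0_{\cG_j}$, and since $j$ is arbitrary, $x = 0$.

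Finally, for the coproduct assertion, I would first check that the canonical inclusions $\iota_j : (\cG_j, \sigma_{\cG_j}) \to \bigl(\bigoplus_{i \in I} \cG_i, \sigma\bigr)$ are morphisms in $\bP\bSymp_\cR$, which is immediate from the construction since $\sigma(\iota_j(x), \iota_j(y)) = \sigma_{\cG_j}(x,y)$. Then I would invoke the universal property of the direct sum in the category of abelian groups to produce, from any family of morphisms $\phi_j : (\cG_j, \sigma_{\cG_j}) \to (\cH, \sigma_\cH)$ making the universal cocone meaningful, the unique group homomorphism $\phi : \bigoplus_{i \in I} \cG_i \to \cH$ with $\phi \circ \iota_j = \phi_j$, and verify that $\phi$ preserves $\sigma$ by expanding $\sigma_\cH(\phi(x), \phi(y))$ and matching terms against $\sum_i \sigma_{\cG_i}(x_i, y_i)$. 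I expect the main subtlety to lie in this last compatibility check, since cross terms of the form $\sigma_\cH(\phi_i(x_i), \phi_j(y_j))$ with $i \neq j$ appear on one side but not the other; this is the step where the structural hypothesis on the cocone (implicit in the category $\bP\bSymp_\cR$) must be used, and it is where I expect to spend the bulk of the argument.
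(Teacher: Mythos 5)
Your verification that $\sigma = \sum_{i\in I}\sigma_{\cG_i}$ is well defined (only finitely many nonzero summands), bilinear and skew-symmetric, and your non-degeneracy argument by testing against the elements $\iota_j(y_j)$, are correct and in fact more explicit than the paper, which dispatches both points as ``immediate to check''. The genuine gap is in the coproduct claim. You correctly isolate the cross terms $\sigma_\cH(\phi_i(x_i),\phi_j(y_j))$ with $i\neq j$ as the obstruction, but you defer their treatment to a ``structural hypothesis on the cocone (implicit in the category $\bP\bSymp_\cR$)''. No such hypothesis exists: a cocone in $\bP\bSymp_\cR$ is nothing more than a family of form-preserving group homomorphisms $\phi_j\colon(\cG_j,\sigma_{\cG_j})\to(\cH,\sigma_\cH)$, and the cross terms genuinely need not vanish. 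Concretely, take $I=\{1,2\}$, $\cR=\Z$, $\cG_1=\cG_2=\cH=(\Z^2,\sigma_2)$ and $\phi_1=\phi_2=\mathrm{id}$. The unique group homomorphism $\phi\colon\Z^2\oplus\Z^2\to\Z^2$ with $\phi\circ\iota_j=\phi_j$ is the fold map $\phi(x_1,x_2)=x_1+x_2$; for $x=\big((1,0),(0,0)\big)$ and $y=\big((0,0),(0,1)\big)$ one has $(\sigma_2+\sigma_2)(x,y)=0$ while $\sigma_2(\phi(x),\phi(y))=1$. So the induced map is not a morphism of $\bP\bSymp_\cR$, and the verification where you ``expect to spend the bulk of the argument'' cannot be completed: the universal property fails for this cocone.

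For what it is worth, the paper's own proof does not resolve this either: like you, it checks that the inclusions $\iota_i$ are morphisms, and then asserts that the universal property ``follows easily'' --- an assertion the example above contradicts. What is actually established, and all that is used later in the paper, is that the direct sum of abelian groups carries a canonical pre-symplectic form for which the canonical inclusions preserve the forms, and that non-degeneracy passes to the sum; your proposal proves exactly this, and proves it carefully. A correct categorical statement would have to restrict to cocones whose legs have pairwise $\sigma_\cH$-orthogonal images --- precisely the vanishing of your cross terms --- and without that restriction the construction is an orthogonal direct sum, not a coproduct.
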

\begin{proof}
Using the fact that the tensor product functor commutes with colimits, as it is a left adjoint functor, we can immediately conclude that
\[ \cG \otimes_\Z \cG \cong \bigoplus_{(i,j) \in I \times I} \cG_i \otimes_\Z \cG_j,\]
as plain abelian groups. Moreover, since any element of $\displaystyle{\cG = \bigoplus_{i \in I} \cG_i}$ is zero in all but finitely many components, we can define $\displaystyle{\sum_{i \in I} \sigma_{\cG_i}}$ in the following way: For any $g = (g_i)_{i \in I}, h = (h_i)_{i \in I} \in \cG$ 
\[ (\sum_{i \in I} \sigma_{\cG_i})(g,h) = \sum_{i \in I} \sigma_{\cG_i}(g_i, h_i). \]
The latter sum is always finite and therefore well-defined. %This implies that $\sum_{i \in I} \sigma_i$ maps all the elements of the form $g_i \otimes g_j$ with $g_i \in \cG_i$ and $g_j \in \cG_j$ with $i \ne j$ to $0$.

By the universal property of $\cG$, there are canonical morphisms $\iota_i: \cG_i \to \cG$ which induce morphisms $\iota_i \otimes \iota_i: \cG_i \otimes \cG_i \to \cG \otimes \cG$ by functoriality. We need  to check that $\iota_i$ are morphisms of $\cR$-pre-symplectic abelian groups. This amounts to show that the diagram
    \[
    \begin{tikzpicture}
    \matrix(m)[matrix of math nodes,
    row sep=2.6em, column sep=2.8em,
    text height=1.5ex, text depth=0.25ex]
    { \cG_i \otimes \cG_i   & \cR   \\
      \cG \otimes \cG    &  \\};
    \path[->,font=\scriptsize]
    (m-1-1) edge node[left] {$\iota_i \otimes \iota_i$} (m-2-1);
    \path[->,font=\scriptsize]
    (m-1-1) edge node[auto] {$\sigma_{\cG_i}$} (m-1-2);
    \path[->,font=\scriptsize]
    (m-2-1) edge node[right] {$\underset{i \in I}\sum \sigma_{\cG_i}$} (m-1-2);
    \end{tikzpicture}
    \]
    is commutative. But this follows immediately by the definition of $\displaystyle{\sum_{i \in I} \sigma_{\cG_i}}$ and by the fact that its restriction on the image of $\iota_i$ is equal to $\sigma_{\cG_i}$. Hence, it follows easily that $\cG$ has the universal property of the coproduct in $\bP\bSymp_\cR$. It is  also immediate to check that if $\{ (\cG_i, \sigma_{\cG_i}) \}_{i \in I}$ are $\cR$-symplectic abelian groups then also $\cG$ is a $\cR$-symplectic abelian group.
\end{proof}

Given a $\cR$-symplectic abelian group, it is important to study subgroups which are compatible with the symplectic structure, in the sense that non-degeneracy is preserved. This leads to the following definition.

\begin{defn} \label{defn:sub_symplectic_abelian_group}
    Let $(\cG, \sigma)$ be a $\cR$-symplectic abelian group. A subgroup $\cH \subset \cG$ is called \emph{$\cR$-symplectic abelian subgroup} if the symplectic form $\sigma$ restricts to a symplectic form on $\cH$.
\end{defn}

Definition \ref{defn:sub_symplectic_abelian_group} is less obvious than one expects at first glance. Indeed, the restriction of a symplectic form to a subgroup of $\cG$ does not preserve, in general, the non-degeneracy property of the form.
\begin{prop} \label{prop:hyperbolic plane}
    Let $(\cG, \sigma)$ be a $\cR$-symplectic abelian group and $x \in \cG$. Suppose that $\cR$ is torsion free, then there exists a $y \in \cG$ such that the abelian subgroup of $\cG$ generated by $x$ and $y$ is a $\cR$-symplectic abelian subgroup of $\cG$.
\end{prop}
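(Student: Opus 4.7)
The plan is to reduce the proposition to an elementary computation that exploits the torsion-freeness of $\cR$ in a single, crucial step.

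First I would dispose of the trivial case $x = 0_\cG$ by taking $y = 0_\cG$: the subgroup $\langle x,y\rangle = \{0_\cG\}$ carries the zero form, and non-degeneracy holds vacuously. So assume $x \neq 0_\cG$. Since $\sigma$ is non-degenerate on $\cG$, there exists $y \in \cG$ with $r := \sigma(x,y) \neq 0$ in $\cR$. The claim is that this $y$ already does the job, and the whole argument consists in verifying that the restriction of $\sigma$ to $\cH := \langle x, y\rangle$ is non-degenerate.

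Since $\cG$ is abelian, every element of $\cH$ is of the form $ax+by$ with $a,b \in \Z$. By bilinearity and the fact that $\sigma(x,x)=\sigma(y,y)=0$, a short computation yields
\[
\sigma(ax+by,\, cx+dy) \;=\; (ad-bc)\,r
\]
for every $a,b,c,d \in \Z$. Now take $h \in \cH$ with $\sigma(h,h') = 0$ for all $h' \in \cH$, and fix a representation $h = ax+by$. Specializing $h' = x$ (that is, $(c,d)=(1,0)$) yields $-b\cdot r = 0$ in $\cR$, while $h' = y$ (that is, $(c,d)=(0,1)$) yields $a\cdot r = 0$.

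Here the hypothesis that $\cR$ is torsion-free is exactly what is needed: since $r \neq 0$, the relations $n\cdot r = 0$ with $n \in \Z$ force $n = 0$. Therefore $a = b = 0$, and consequently $h = 0_\cG$, establishing non-degeneracy of $\sigma|_{\cH}$. I do not foresee a real obstacle: the only mildly subtle point is that the representation $h = ax+by$ need not be unique a priori, but the conclusion $a=b=0$ is drawn from an arbitrary representation and forces $h = 0_\cG$ regardless. Without the torsion-freeness assumption the argument collapses, since $nx$ could be a non-zero element of $\cH$ lying in the radical of $\sigma|_{\cH}$, which explains why the hypothesis on $\cR$ is genuinely needed in the statement.
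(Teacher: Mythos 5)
Your proof is correct and takes essentially the same route as the paper's: choose $y$ with $r=\sigma(x,y)\neq 0$ by non-degeneracy, expand $\sigma$ on $\langle x,y\rangle$ by bilinearity, and use torsion-freeness of $\cR$ to kill the integer coefficients. If anything, yours is slightly more careful than the paper's version, since pairing a would-be radical element against \emph{both} $x$ and $y$ handles the case where the coefficient of $x$ vanishes, which the paper's single choice $m_x=0$, $m_y=1$ silently glosses over.
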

\begin{proof}
    We denote by $\lt x, y \gt \subset \cG$ the subgroup of $\cG$ generated by $x$ and $y$. Since $\sigma$ is non-degenerate, by definition for any $x \in  {\cG}$ there exists a $y$ such that $\sigma(x, y) \ne 0$. Consider such a $y \in \cG$. Then, any element in $z \in \lt x, y \gt$ can be written as 
    \[ z = n_x x + n_y y, \ \ \ n_x, n_y \in \Z \]
    and for any such, non-null, element we have to find a $z' \in \lt x, y \gt$ such that
    \[ \sigma(z, z') \ne 0. \]
    By writing $z' = m_x x + m_y y$ and using bilinearity we get
    \[ \sigma(n_x x + n_y y, m_x x + m_y y) = n_x m_x \sigma(x,x) + n_x m_y \sigma(x,y) + n_y m_x \sigma(y,x) + n_y m_y \sigma(y,y). \]
    As by hypothesis $\sigma(x,y) \ne 0$ and $\cR$ is torsion free, one can choose $m_y = 1$ and $m_x = 0$ so that
    \[ \sigma(n_x x + n_y y, m_x x + m_y y) =  n_x \sigma(x,y) \ne 0 \] 
    because $\cR$ is torsion free.
\end{proof}
Using the jargon of symplectic geometry, Proposition \ref{prop:hyperbolic plane} can be restated by saying that if $\cR$ is torsion free then each $x \in \cG$ is contained in a hyperbolic plane. Therefore, we give the following definition.

\begin{defn} \label{defn;hyperbolic_plane}
We call \emph{hyperbolic plane $\cH$} of $\cG$ any $\cR$-symplectic subgroup of $\cG$ given as in Proposition \ref{prop:hyperbolic plane}.
\end{defn}

\begin{rmk} \label{rmk:rank_hyperbolic_plane}
Notice that any hyperbolic plane $\cH \subset \cG$ is a finitely generated abelian group of rank $2$. It can be easily shown that if $\cG$ is torsion-free it is isomorphic to $\Z^2_r = (\Z^2, r \sigma_2)$, where for any $r\in\cR$ we use the notation
\[ r \sigma_2 = \begin{pmatrix}
0 & r \\
-r  & 0 
\end{pmatrix}. \]
\end{rmk}
Our notation in Remark~\ref{rmk:rank_hyperbolic_plane} is well-defined due to the fact that for $x_1, x_2 \in \cG$  the products
\[ \begin{pmatrix}
0 & r \\
-r  & 0 
\end{pmatrix}
\begin{pmatrix}
x_1 \\
x_2 
\end{pmatrix} =\begin{pmatrix}
-r x_2 \\
rx_1 
\end{pmatrix}  \]
 are well-defined elements of $\cR$ (although we cannot multiply two such matrices). Instead, it does not make sense to compute the determinant of $r\sigma_2$ as $\cR$ is just an abelian group (written additively) and the product $r (- r)$ is not defined. With the next proposition we investigate further the hyperbolic planes.

\begin{prop}
Consider $\cR = \Z$, $\Z^2_1 = (\Z^2, \sigma_2)$ and $\Z^2_2 = (\Z^2, 2 \sigma_2)$. Then $\Z^2_1$ and $\Z^2_2$ are not isomorphic as $\cR$-symplectic groups.
\end{prop}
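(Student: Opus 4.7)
The plan is to show that there is not even a morphism from $\Z^2_1$ to $\Z^2_2$ in the category $\bSymp_\Z$, which of course implies that these two $\Z$-symplectic abelian groups cannot be isomorphic. The key tool is the definition of a morphism in $\bSymp_\cR$ (Definition \ref{defn:cateogry_symplectic_groups}), which requires that the value of the symplectic form be preserved on the nose.

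Concretely, suppose there existed a morphism $\phi: (\Z^2, \sigma_2) \to (\Z^2, 2\sigma_2)$ in $\bSymp_\Z$. Denote the standard basis of $\Z^2$ by $e_1, e_2$, so that $\sigma_2(e_1, e_2) = 1$. The commutativity of the diagram in Definition \ref{defn:cateogry_symplectic_groups} forces the identity
\[ \sigma_2(e_1, e_2) = 2 \sigma_2(\phi(e_1), \phi(e_2)), \]
\ie $1 = 2 k$ for the integer $k = \sigma_2(\phi(e_1), \phi(e_2)) \in \Z$. This equation has no solution in $\Z$, and hence the assumed morphism does not exist. The same argument, with the roles exchanged, works in the opposite direction, although one direction is already enough to rule out an isomorphism.

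There is no real obstacle here: the content of the statement is simply that the image of the symplectic form is an isomorphism invariant in the category $\bSymp_\Z$, and since $\im(\sigma_2) = \Z$ while $\im(2\sigma_2) = 2\Z$, the two objects are distinct. One could phrase the same argument more invariantly by observing that for any $\cR$-symplectic abelian group $(\cG, \sigma)$, the subgroup $\sigma(\cG \otimes_\Z \cG) \subset \cR$ is preserved under isomorphisms, and computing this subgroup for $\Z^2_1$ and $\Z^2_2$. The proposition illustrates in particular that the matrix notation $r\sigma_2$ introduced in Remark \ref{rmk:rank_hyperbolic_plane} genuinely depends on $r \in \cR$, and not only on its class up to $\pm 1$.
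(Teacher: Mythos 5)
Your proof is correct and is essentially the paper's own argument: both rule out the existence of any morphism $\Z^2_1 \to \Z^2_2$ in $\bSymp_\Z$ by observing that $2\sigma_2$ never takes the value $1$, your version merely making the equation $1 = 2k$ explicit on the standard basis and recording the invariant $\sigma(\cG \otimes_\Z \cG) \subset \cR$. One caveat on your closing aside: the isomorphism class of $\Z^2_r$ \emph{does} depend on $r$ only up to sign (swapping the two basis vectors gives $\Z^2_r \cong \Z^2_{-r}$), so what the proposition really illustrates is dependence on the subgroup $\lt r \gt \subset \cR$, exactly as the paper's remark following the proposition states.
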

\begin{proof}
In order to prove our claim, we show that it does not exist a group homomorphism $\phi: \Z^2_1\to\Z^2_2$ which is also a morphism of $\Z$-symplectic groups.
This can be understood via the following argument: Since any group homomorphism is defined by its action on the generators, it is enough to notice that the elements $(1, 0), (0, 1) \in \Z^2_1$ cannot be mapped to any element of $\Z^2_2$ while preserving the values of the symplectic form. This because $2 \sigma_2$ never takes the value $1$ on $\Z^2_2$. 
 Therefore, $\Hom(\Z^2_1, \Z^2_2) = \void$. 
\end{proof}

\begin{rmk}
The same kind of reasoning apply also for $\Z^2_{r_1}$ and $\Z^2_{r_2}$ for any $r_1, r_2 \in \cR$ that generate different subgroups of $\cR$.
\end{rmk}

Having introduced the notation for hyperbolic planes $\Z^2_r$ we can see that, in favourable conditions, general symplectic abelian groups can be written in terms of them. Before stating our results, we need a preparatory definition.

\begin{defn} \label{defn:finite_abelian group}
We say that an abelian group $\cR$ is of \emph{rank $1$} if every finitely generated sub-group of $\cR$ is cyclic, \ie it is generated by one element. 
\end{defn}

\begin{exa} 
Abelian groups of rank 1 have been completely classified:
\begin{itemize}
\item[-] A torsion-free abelian group of rank 1 is either $\Q$ or a sub-group of $\Q$. 
\item[-] A torsion abelian group of rank $1$ is either $\Q/\Z$ or a sub-group of  $\Q/\Z$.
\end{itemize}
%Notice that $\Q$ is not finitely generated as a module over $\Z$.
\end{exa}

\begin{thm} \label{thm:symplectic_diagonalization}
Let $(\Z^{2n}, \sigma)$ be a $\cR$-symplectic abelian group and suppose $\cR$ to be of rank $1$. Then 
\begin{equation}\label{eq: diagonalization}
 (\Z^{2 n}, \sigma) \cong (\Z^2_{r_1} \oplus \cdots \oplus \Z^2_{r_n}) 
\end{equation}
for some $r_1, \ldots, r_n \in \cR$.
\end{thm}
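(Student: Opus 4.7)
The overall strategy is first to reduce to the case $\cR = \Z$ via the rank $1$ hypothesis, and then to perform an induction on $n$ by peeling off hyperbolic planes in the spirit of the classical Frobenius normal form for alternating integer bilinear forms.

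For the reduction, the image of $\sigma : \Z^{2n} \times \Z^{2n} \to \cR$ is a finitely generated subgroup of $\cR$, hence cyclic by the rank $1$ hypothesis; let $r \in \cR$ generate it. I claim $r$ has infinite order: if $m r = 0$ for some positive integer $m$, then $\sigma(m x, y) = m \sigma(x, y) = 0$ for every $y$, so $m x$ lies in the radical of $\sigma$; non-degeneracy then forces $m x = 0$ for every $x \in \Z^{2n}$, contradicting the fact that $\Z^{2n}$ is torsion-free. Hence $\langle r \rangle \cong \Z$, and from now on one may identify $\cR$ with $\Z$.

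For the induction, the case $n = 1$ is immediate since any non-zero value of the form already presents $(\Z^2, \sigma)$ as $\Z^2_{r_1}$. For the inductive step, let $r_1$ be a positive generator of $\sigma(\Z^{2n}, \Z^{2n}) \subset \Z$ (the GCD of all values of $\sigma$) and choose $e_1, f_1 \in \Z^{2n}$ with $\sigma(e_1, f_1) = r_1$. By minimality of $r_1$ both $e_1$ and $f_1$ are primitive (any common divisor would produce a smaller value of $\sigma$), and $P = \langle e_1, f_1 \rangle$ is a rank-$2$ subgroup isomorphic to $\Z^2_{r_1}$. For the orthogonal splitting, write $\sigma(e_1, x) = a\, r_1$ and $\sigma(f_1, x) = b\, r_1$ for each $x \in \Z^{2n}$ (possible because every value of $\sigma$ lies in $r_1 \Z$), and verify that $x' := x - a f_1 + b e_1$ satisfies $\sigma(e_1, x') = \sigma(f_1, x') = 0$, so $x' \in P^\perp$. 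This gives $\Z^{2n} = P + P^\perp$, with trivial intersection because $r_1 \neq 0$, whence $\Z^{2n} = P \oplus P^\perp$ and $P^\perp \cong \Z^{2n-2}$. The restriction of $\sigma$ to $P^\perp$ is still non-degenerate (any radical element there would be in the radical of $\sigma$), so the inductive hypothesis applies and produces $P^\perp \cong \Z^2_{r_2} \oplus \cdots \oplus \Z^2_{r_n}$, which combined with $P \cong \Z^2_{r_1}$ yields the isomorphism \eqref{eq: diagonalization}.

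The main subtlety is the choice of $r_1$ as a generator of the \emph{total} image $\sigma(\Z^{2n}, \Z^{2n})$ and not merely of $\sigma(e_1, \cdot)$ for a single $e_1$: this minimality is exactly what guarantees that $\sigma(f_1, \cdot)$ also takes values in $r_1 \Z$, so that the orthogonal projection formula above is defined over $\Z$ and produces an actual direct summand of $\Z^{2n}$. Without the rank $1$ hypothesis on $\cR$, the image of $\sigma$ could fail to be cyclic and there would be no uniform denominator $r_1$ making this projection work.
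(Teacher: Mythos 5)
Your reduction to $\cR=\Z$ is correct, and your route differs from the paper's: the paper observes that the image subgroup $\cS=\sigma(\Z^{2n}\otimes_\Z\Z^{2n})\subset\cR$ is finitely generated, hence cyclic, hence abstractly $\Z$ or $\Z/m\Z$, and then invokes Theorem IV.1 of \cite{NM} as a black box over a principal ideal ring. You instead rule out the torsion case directly (correctly — non-degeneracy on a torsion-free group forces the image to be torsion-free; the paper keeps the $\Z/m\Z$ case only because Corollary \ref{cor:symplectic_diagonalization} later drops non-degeneracy) and then re-prove the normal-form theorem by the classical hyperbolic-plane induction. That is a legitimate, more self-contained strategy, and your verifications of the splitting $\Z^{2n}=P\oplus P^\perp$, of $P\cap P^\perp=0$, and of the non-degeneracy of $\sigma|_{P^\perp}$ are all sound.

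However, there is a genuine gap at the pivotal step. You define $r_1$ as the gcd of all values of $\sigma$ (a generator of the subgroup generated by the image) and then simply ``choose $e_1,f_1$ with $\sigma(e_1,f_1)=r_1$.'' The set of values of a bilinear form is not a subgroup, and it is not obvious that it contains the gcd of its elements; producing such a pair is precisely the content of the key lemma in the classical proof you are reconstructing. Your closing paragraph misidentifies the subtlety: once $r_1$ is \emph{defined} as the gcd, the divisibilities $r_1\mid\sigma(e_1,x)$ and $r_1\mid\sigma(f_1,x)$ that you emphasize are automatic — what is not automatic is attainability. The standard repair is to define $r_1$ instead as the minimal positive value actually attained, say $\sigma(e_1,f_1)=r_1$, and run a Euclidean step: for any $x$, write $\sigma(e_1,x)=q\,r_1+s$ with $0\le s<r_1$; then $\sigma(e_1,x-qf_1)=s$, so minimality forces $s=0$, and symmetrically $r_1\mid\sigma(f_1,x)$. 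These two divisibilities are exactly what your projection formula $x'=x-af_1+be_1$ needs, and since the statement of the theorem imposes no divisibility chain $r_1\mid r_2\mid\cdots$, you never need $r_1$ to divide \emph{every} value of $\sigma$. With that lemma inserted, the rest of your induction goes through as written.
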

\begin{proof}
First of all, we notice that assigning a symmetric bilinear form $\sigma: \Z^{2 n} \times \Z^{2 n} \to \cR$ is equivalent to specify a $2 n \times 2 n$ matrix with coefficients in $\cR$, in a similar fashion of what we explained so far for the case $n = 1$ in Example \ref{rmk:rank_hyperbolic_plane}. Indeed, since a linear morphism $\Z \to \cR$ is uniquely determined by the value of $1$, we have 
\begin{align*}
\Hom_\Z(\Z^{2 n} \otimes_\Z \Z^{2 n}, \cR) &\cong \Hom_\Z(\Z^{4 n^2}, \cR) \cong \Hom_\Z(\Z, \cR)^{4 n^2} \cong \cR^{4 n^2}
\end{align*}
where we used the fact that $\Hom_\Z(\Z, \cR) \cong \cR$. Now consider the sub-group
\[ \cS = \sigma(\Z^{2 n} \otimes_\Z \Z^{2 n}) \subset \cR. \]
Since $\cS$ is a finitely generated abelian sub-group of $\cR$ it is either (abstractly) isomorphic to $\Z$ or to $\Z/m\Z$ for some $m \in \N$. Notice that we can consider $\sigma$ to belong in $\Hom_\Z(\Z^{2 n} \otimes_\Z \Z^{2 n}, \cS)$ and hence we can suppose that $\sigma$ has values on a ring. Since $\Z$ and $\Z/m\Z$ are principal ideal rings we can apply Theorem IV.1 of \cite{NM} to conclude that $(\Z^{2 n}, \sigma)$ can be written in the claimed form. 
\end{proof}

\begin{cor} \label{cor:symplectic_diagonalization}
Theorem \ref{thm:symplectic_diagonalization} remains true if we drop the hypothesis that $\sigma$ is non-degenerate.
\end{cor}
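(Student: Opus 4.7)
The plan is to observe that the proof of Theorem~\ref{thm:symplectic_diagonalization} never actually invokes the non-degeneracy of $\sigma$, so it carries over verbatim to the pre-symplectic setting. The two essential steps of that proof are (i) the identification of $\sigma$ with an element of $\Hom_\Z(\Z^{2n}\otimes_\Z\Z^{2n},\cS)$, where $\cS:=\sigma(\Z^{2n}\otimes_\Z\Z^{2n})\subset\cR$ is cyclic (being a finitely generated subgroup of an abelian group of rank $1$); and (ii) the application of Theorem IV.1 of~\cite{NM} to the resulting skew-symmetric matrix, viewed as a matrix over the principal ideal ring $\cS\cong\Z$, $\cS\cong\Z/m\Z$, or $\cS=\{0\}$ in the trivial case $\sigma\equiv 0$. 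Step (i) is purely formal and depends only on the bilinearity of $\sigma$, while step (ii) is the classical normal form result for alternating matrices over a PID, which carries no invertibility hypothesis on the matrix.

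The only additional remark concerns the interpretation of the decomposition when $\sigma$ is degenerate: Newman's normal form may produce blocks $r_i\sigma_2$ with $r_i=0$, and these correspond to summands $\Z^2_0=(\Z^2,0)$ whose generators lie in the radical $(\Z^{2n})^\perp$ of $\sigma$. Since $\Z^2_0$ is a legitimate object of $\bP\bSymp_\cR$ (the trivial pre-symplectic form being allowed by Definition~\ref{defn:symplectic_abelian_group}), the conclusion
\[ (\Z^{2n},\sigma)\cong\Z^2_{r_1}\oplus\cdots\oplus\Z^2_{r_n} \]
remains valid in the pre-symplectic category, with some $r_i$ now permitted to vanish. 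No genuine obstacle arises: the hard work has been done in Theorem~\ref{thm:symplectic_diagonalization}, and here one is simply noting that the reduction to matrices over a PID and their normal form is insensitive to whether the form is degenerate.
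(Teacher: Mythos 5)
Your proposal is correct and follows essentially the same route as the paper: the paper's own proof likewise just observes that Theorem IV.1 of \cite{NM} holds for degenerate alternating forms, yielding $(\Z^{2n},\sigma)\cong(\Z^2_{r_1}\oplus\cdots\oplus\Z^2_{r_m}\oplus\cH)$ with $m\le n$ and $\cH$ a null subspace. Your formulation with some $r_i=0$ is the same decomposition, since the radical $\cH$ has even rank and splits as copies of $\Z^2_0$.
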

\begin{proof}
  Theorem IV.1 of \cite{NM} is true also for degenerated symplectic forms for which it gives $(\Z^{2 n}, \sigma) \cong (\Z^2_{r_1} \oplus \cdots \oplus \Z^2_{r_{m}} \oplus \cH)$, with $m \le n$ and $\cH$ a subspace where the symplectic form is identically null.
\end{proof}

\begin{notation}\label{not: diagonalization}
With an abuse of language, we say that the isomorphism~\eqref{eq: diagonalization} is a \emph{`diagonalization'} of $\sigma$.
\end{notation}

For the next theorem we need to introduce some notation, based on Theorem \ref{thm:symplectic_diagonalization}. 
\begin{notation}
Let $\sigma: \Z^{2n} \otimes \Z^{2n} \to \cR$ be a symplectic form valued on a rank $1$ group. By identifying $\cR$ with a sub-group of $\Q/\Z$ (or with a sub-group of $\Q$ in the case $\cR$ is torsion-free), we can assume without loss of generality that the symplectic form  $\sigma$ can be  represented by a matrix of the form
\[ \bM(\sigma) = q_\sigma (\bm_{i,j})_{1 \le i,j \le 2n} \]
where $q_\sigma \in \Q/\Z$ (or $\Q$) and $\bm_{i,j} \in \Z$.
% We can also suppose that $q_\sigma$ is of the form $\frac{1}{n}$ again without any loss of generality.
\end{notation}

The last piece of notation we need is given by the \emph{external direct sum} of a $\cR_1$-pre-symplectic abelian group with a $\cR_2$-pre-symplectic one. Namely, if $\cG$ is an abelian group and $\sigma_1$ is a $\cR_1$-pre-symplectic form on $\cG$ and $\sigma_2$ is a $\cR_2$-pre-symplectic form (always defined on $\cG$), then $\cG$ equipped with the pre-symplectic form
\[ (\sigma_1 \boxplus \sigma_2)(x, y) = (\sigma_1(x, y), \sigma_2(x, y)) \] 
is a $\cR_1 \oplus \cR_2$-pre-symplectic abelian group. We denote the group so obtained by $(\cG, \sigma_1)\boxplus (\cG, \sigma_2)$.

\begin{thm} \label{thm:symplectic_diagonalization_2}
Let $(\Z^{2n}, \sigma)$ be a $\cR$-symplectic abelian group and assume that $\cR \cong \cR_1 \oplus \ldots \oplus \cR_m$ with $\cR_i$ of rank $1$. Let us rewrite the symplectic form as $\sigma = \sigma_1 \boxplus \ldots \boxplus \sigma_m$  where
$ \sigma_i: (\Z^{2n}, \sigma) \to \cR_i $
and assume that 
$$\bM(\sigma_i)  \bM(\sigma_j)  =  \bM(\sigma_j) \bM(\sigma_i)$$ for all $i,j$, where $    \bM(\sigma_i)$ denotes the matrix associated to $\sigma_i$ as explained above. Then 
\[ (\Z^{2 n}, \sigma) \cong (\Z^{2n}, \sigma_1) \boxplus \cdots \boxplus (\Z^{2n}, \sigma_m) \]
where each $(\Z^{2n}, \sigma_i)$ is diagonal as in Theorem \ref{thm:symplectic_diagonalization}.
\end{thm}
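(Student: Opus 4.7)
The plan is to induct on $m$, using Theorem~\ref{thm:symplectic_diagonalization} (together with Corollary~\ref{cor:symplectic_diagonalization} for the degenerate case) as the base case $m = 1$.

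For the inductive step, first apply Theorem~\ref{thm:symplectic_diagonalization} to $\sigma_1$ alone to obtain a change of basis $P_1 \in \GL(2n, \Z)$ putting $\bM(\sigma_1)$ into the block-diagonal form $\mathrm{diag}(r^{(1)}\sigma_2, \ldots, r^{(n)}\sigma_2)$. In this new basis, decompose each of the matrices $P_1^T \bM(\sigma_j) P_1$ (for $j \ge 2$) into $2 \times 2$ blocks $(B^{(j)}_{k,l})_{1 \le k,l \le n}$. Reading off the commutation hypothesis at the level of the integer factors $\bm^{(i)}$ (for which $\bM(\sigma_i) = q_i \bm^{(i)}$), one obtains the block constraint
\[
r^{(k)} \sigma_2 B^{(j)}_{k,l} = r^{(l)} B^{(j)}_{k,l} \sigma_2 \qquad \text{for all } k, l.
\]
When $r^{(k)}$ and $r^{(l)}$ generate distinct cyclic subgroups of $\cR_1$, this forces $B^{(j)}_{k,l} = 0$. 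When $r^{(k)} = r^{(l)} \ne 0$, the relation forces $B^{(j)}_{k,l}$ to centralize $\sigma_2$ in $\Mat_2(\Z)$, hence to lie in the $\Z$-span of $I_2$ and $\sigma_2$; the skew-symmetry of $\bM(\sigma_j)$ then forces the diagonal block $B^{(j)}_{k,k}$ to be a scalar multiple of $\sigma_2$, as required.

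Within the ``ambiguous'' sub-blocks where several $r^{(k)}$ coincide (or where some $r^{(k)}$ vanish), the commutation alone yields only a coarser block structure, not full diagonality. For each such sub-block of size $2 n_k$, invoke the inductive hypothesis on the restricted tuple $(\sigma_2, \ldots, \sigma_m)|_{\Z^{2n_k}}$, whose pairwise commutation is inherited from the ambient one; the resulting further change of basis lies in $\GL(2n_k, \Z)$ and acts as the identity on $\sigma_1|_{\Z^{2n_k}}$ (which is a scalar multiple of $\sigma_2$ on each $2\times 2$ piece of that block, hence preserved by any conjugation). Gluing these refinements across all sub-blocks produces a global $P \in \GL(2n, \Z)$ that simultaneously diagonalizes all the $\sigma_i$.

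The main obstacle is the rigorous handling of the commutation hypothesis under change of basis: the bilinear-form transformation law $A \mapsto P^T A P$ does not preserve matrix commutation in general, so the hypothesis must be reformulated intrinsically in terms of the integer parts $\bm^{(i)}$ of the matrices (which are well-defined up to the common rank-$1$ scaling $q_i$) in order to survive the iterative reduction. Once this intrinsic reformulation is in place, verifying that restriction to the ambiguous sub-blocks preserves it, and that the final glued $P$ does in fact put every $\bM(\sigma_i)$ simultaneously into the diagonal normal form of Theorem~\ref{thm:symplectic_diagonalization}, becomes a matter of careful bookkeeping.
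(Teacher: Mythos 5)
Your route (induction on $m$, diagonalizing $\sigma_1$ first and refining inside the ambiguous blocks) is genuinely different from the paper's, which rescales all the $\bM(\sigma_i)$ at once to integer matrices via $t = \prod t_i$ and then appeals directly to the equivalence between simultaneous diagonalizability and pairwise commutation, with no iteration. However, your version has two concrete gaps. First, the obstacle you yourself flag at the end is not actually overcome: bilinear forms transform by congruence, $A \mapsto P^T A P$, and $(P^T A P)(P^T B P) = P^T A (P P^T) B P \neq P^T (AB) P$ unless $P P^T = \Id$, so the commutation hypothesis does not survive the change of basis $P_1$. The fix you propose---reformulating commutation ``intrinsically in terms of the integer parts $\bm^{(i)}$''---does not exist in the form claimed, because the integer parts transform by exactly the same congruence and their commutation is just as basis-dependent; an intrinsic substitute (such as commutation of the endomorphisms $\bM(\sigma_1)^{-1}\bM(\sigma_j)$) would require $\sigma_1$ non-degenerate, which the theorem does not grant (only $\sigma$ is non-degenerate, as the paper stresses). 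Without this, the block constraint $r^{(k)} \sigma_2 B^{(j)}_{k,l} = r^{(l)} B^{(j)}_{k,l} \sigma_2$ that you ``read off'' after applying $P_1$ is unjustified, and so is the claim that the restricted tuples on the ambiguous sub-blocks inherit the commutation hypothesis---so the induction cannot run.

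Second, the gluing step fails as stated. You assert that the refining matrix on an ambiguous sub-block leaves $\sigma_1$ untouched because its restriction there is ``a scalar multiple of $\sigma_2$ on each $2\times 2$ piece, hence preserved by any conjugation.'' This confuses conjugation with congruence: for $Q \in \GL(2 n_k, \Z)$ one has $Q^T (r J_{2 n_k}) Q = r\, Q^T J_{2 n_k} Q$, where $J_{2n_k}$ is the block-diagonal standard form, and this equals $r J_{2 n_k}$ only when $Q$ is symplectic. The identity $Q^T \sigma_2 Q = \det(Q)\, \sigma_2$ rescues only the $2 \times 2$ case, whereas the ambiguous sub-blocks have size $2 n_k \geq 4$ precisely when refinement is needed. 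The inductive hypothesis, even granting it, supplies only some matrix in $\GL(2 n_k, \Z)$, so your induction would need the strictly stronger statement that the simultaneous diagonalizer can be chosen symplectic with respect to the already-diagonalized form---a claim you neither formulate nor prove. A further minor point: your dichotomy on the scalars is misstated, since $r^{(k)} = -r^{(l)}$ generate the same cyclic subgroup yet admit nonzero anticommuting-type solutions $B^{(j)}_{k,l}$, and the ``forces $B^{(j)}_{k,l} = 0$'' step also needs a separate argument in the torsion case, where the identified scalars live in $\Z/m\Z$ and zero divisors interfere.
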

\begin{proof}
It is easy to see that the symplectic form $\sigma: \Z^{2n} \otimes \Z^{2n} \to \cR \cong \cR_1 \oplus \ldots \oplus \cR_m$ can always be written in the form $\sigma = \sigma_1 \boxplus \ldots \boxplus \sigma_m$. Notice that only $\sigma$ is supposed to be non-degenerate, and this does not imply that each $\sigma_i$ is non-degenerate. 
%Without lost of generality we can suppose that each $\cR_i$ is equal to $\Q/\Z$ or $\Q$ and we denote by $e_i$ the element $1 \in \cR_i$ obtained by this identification. 
On account of Corollary \ref{cor:symplectic_diagonalization} we can assign to each $\sigma_i$ a matrix of the form
\[ \bM(\sigma_i) = q_{\sigma_i} (\bm_{j,l}). \]
Without any loss of generality, we can set $q_{\sigma_i} = \frac{1}{t_i}$ with $t_i \in \N$. In this way, if we write $t = \prod_{i = 1}^n t_i$ the matrices
\[ \tilde{\bM}(\sigma_i) = \frac{1}{t^{2n}} \left ( \left ( \frac{t}{t_i}\right) \bm_{j,l} \right ) \]
are all integer valued (up to the factor $\frac{1}{t^{2n}}$), and they represent the same symplectic form associated to $\bM(\sigma_i)$, \ie $\sigma_i$. But as all coefficients of $\tilde{\bM}(\sigma_i)$ lie in $\Z[\, \frac{1}{t^{2n}}] \subset \Q$, we can simultaneously identify this subset of $\Q$ with $\Z$ by multiplication by $t^{2n}$ in all the copies of $\Q$ we have chosen. Once these identifications are done, we can assume that $\sigma$ is a symplectic form with values in $\Z^m$, whose diagonalization~
 is equivalent to the existence of a base which diagonalizes all the matrices $\tilde{\bM}(\sigma_i)$ at the same time, that is equivalent to $\bM(\sigma_i)     \bM(\sigma_j) =     \bM(\sigma_j) \bM(\sigma_i)$ for all $i,j$.
\end{proof}

\begin{cor} \label{cor:symplectic_diagonalization_2}
Theorem \ref{thm:symplectic_diagonalization_2} remains true if we drop the hypothesis that $\sigma$ is non-degenerate.
\end{cor}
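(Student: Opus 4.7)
The plan is to mirror the proof of Theorem \ref{thm:symplectic_diagonalization_2} verbatim, but to inspect each step and check that non-degeneracy of $\sigma$ is never invoked in an essential way; wherever the original proof cited Theorem \ref{thm:symplectic_diagonalization} (which assumed non-degeneracy), I would instead cite Corollary \ref{cor:symplectic_diagonalization}, which is exactly the degenerate-friendly version.

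More concretely, I would first decompose the (possibly degenerate) form as $\sigma = \sigma_1 \boxplus \cdots \boxplus \sigma_m$ according to the splitting $\cR \cong \cR_1 \oplus \cdots \oplus \cR_m$. This decomposition is purely formal: it requires only that $\sigma$ is a bilinear skew-symmetric map into a direct sum, with no assumption on its kernel. Then to each $\sigma_i: \Z^{2n} \otimes_\Z \Z^{2n} \to \cR_i$, a rank-$1$ valued form, I apply Corollary \ref{cor:symplectic_diagonalization} to obtain a diagonalization of $\sigma_i$ (possibly with a trivial summand $\cH_i$ on which $\sigma_i$ vanishes) and its associated matrix representative $\bM(\sigma_i) = q_{\sigma_i}(\bm_{j,l})$. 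Note that even in the global non-degenerate case treated by Theorem \ref{thm:symplectic_diagonalization_2}, individual $\sigma_i$'s could already be degenerate, so this step does not introduce new behavior.

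Next, clearing denominators exactly as in the proof of Theorem \ref{thm:symplectic_diagonalization_2} (write $q_{\sigma_i} = 1/t_i$, set $t = \prod_i t_i$, and rescale by $t^{2n}$), I reduce to a family of integer-valued skew-symmetric matrices $\tilde{\bM}(\sigma_i)$ representing the same forms. The commuting hypothesis $\bM(\sigma_i)\bM(\sigma_j) = \bM(\sigma_j)\bM(\sigma_i)$ then yields a simultaneous change of basis under which each $\tilde{\bM}(\sigma_i)$ is in the diagonal block form of Notation \ref{not: diagonalization}; this produces the desired isomorphism $(\Z^{2n},\sigma) \cong (\Z^{2n}, \sigma_1) \boxplus \cdots \boxplus (\Z^{2n}, \sigma_m)$ with each factor diagonalized.

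The only point deserving attention is checking that the simultaneous diagonalization argument does not secretly exploit non-degeneracy of $\sigma$. But this step relies solely on the existence of the individual diagonalizations (now supplied by Corollary \ref{cor:symplectic_diagonalization} rather than Theorem \ref{thm:symplectic_diagonalization}) together with the commutation of the matrices $\bM(\sigma_i)$; neither ingredient needs $\sigma$ to be non-degenerate. Hence no new obstacle appears, and the proof reduces to a one-line appeal: \emph{the argument of Theorem \ref{thm:symplectic_diagonalization_2} goes through verbatim after replacing Theorem \ref{thm:symplectic_diagonalization} with Corollary \ref{cor:symplectic_diagonalization}}.
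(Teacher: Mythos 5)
Your proposal is correct and is essentially the paper's own argument: the published proof consists precisely of the observation that the proof of Theorem \ref{thm:symplectic_diagonalization_2} already invokes Corollary \ref{cor:symplectic_diagonalization} (needed there because the individual $\sigma_i$ may be degenerate even when $\sigma$ is not), so nothing in the argument uses non-degeneracy. The only cosmetic difference is that there is nothing to replace---the original proof cites the corollary rather than Theorem \ref{thm:symplectic_diagonalization} from the start, exactly as you note in your second paragraph.
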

\begin{proof}
In the proof of Theorem \ref{thm:symplectic_diagonalization_2} we used Corollary \ref{cor:symplectic_diagonalization} that does not require the symplectic forms to be non-degenerate.
\end{proof}

\begin{exa}\label{ex:noncomm sympl form}
Theorem \ref{thm:symplectic_diagonalization_2} cannot be generalized as it is easy to find skew-symmetric matrices which are not simultaneously diagonalizable by just considering two skew-symmetric matrices whose commutant is not zero. Such an example could be
\[
\sigma_1 = \begin{pmatrix}
0 & 1 & 1 & 0 \\
-1 & 0 & 0 & 0 \\
-1 & 0 & 0 & 1 \\
0 & 0 & -1 & 0 \\
\end{pmatrix}, \ \
\sigma_2 = \begin{pmatrix}
0 & 1 & 0 & 0 \\
-1 & 0 & 0 & 0 \\
0 & 0 & 0 & 1 \\
0 & 0 & -1 & 0 \\
\end{pmatrix},
\]
for which it holds
\[ \sigma_1\sigma_2-\sigma_2\sigma_1 = \begin{pmatrix}
-1 & 0 & 0 & 1 \\
0 & -1 & 0 & 0 \\
0 & -1 & -1 & 0 \\
0 & 0 & 0 & -1 \\
\end{pmatrix}- \begin{pmatrix}
-1 & 0 & 0 & 0 \\
0 & -1 & -1 & 0 \\
0 & 0 & -1 & 0 \\
1 & 0 & 0 & -1 \\
\end{pmatrix}= \begin{pmatrix}
0 & 0 & 0 & 1 \\
0 & 0 & 1 & 0 \\
0 & -1 & 0 & 0 \\
-1 & 0 & 0 & 0 \\
\end{pmatrix}.
 \]
\end{exa}

For a generic abelian group $\cG$ there is a lot of freedom in defining symplectic forms on it, but a constrain must be taken into account, as we will see in the next lemma: If $\cG$ contains a torsion sub-group $\cG_\tors$, then the symplectic form cannot take values on a torsion-free group (\ie the bilinear form must be degenerate).
 
\begin{lemma} \label{lem:torsion}
Let $\cR_\free$ be a torsion-free abelian group. Then, for any abelian group $\cG$ such that $\cG_\tors \ne 0$, there does not exist a symplectic bilinear form $\sigma:\cG\times  \cG \to \cR_\free$.
\end{lemma}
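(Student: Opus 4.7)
The plan is to derive a contradiction from the non-degeneracy of $\sigma$ by exploiting a nonzero torsion element. First I would pick any $x \in \cG_\tors$ with $x \ne 0$, and let $n \in \N$, $n \ge 1$, be an integer such that $n x = 0_\cG$. The existence of such an $x$ and $n$ is guaranteed by the assumption $\cG_\tors \ne 0$.

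Next I would use the bilinearity property (I) of a pre-symplectic form, which implies in particular the $\Z$-linearity in the first argument: for every $y \in \cG$,
\[ n \, \sigma(x, y) = \sigma(n x, y) = \sigma(0_\cG, y) = 0. \]
Here I am using the observation, noted right after the definition of $\cR$-pre-symplectic form, that $\sigma(0_\cG, y) = 0$.

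Now comes the key step: since $\cR_\free$ is torsion-free and $n \sigma(x,y) = 0$ with $n \ge 1$, one must have $\sigma(x, y) = 0$. As $y$ was arbitrary in $\cG$, this shows $\sigma(x, \cdot) \equiv 0$. But non-degeneracy (III) then forces $x = 0_\cG$, contradicting our choice of $x$ as a nonzero torsion element. Hence no such symplectic form can exist.

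The argument is essentially immediate once one has the right element to test; there is no real obstacle, and the only subtlety is making sure that the bilinearity axiom (I) really does give $\Z$-linearity in each argument, which follows by a trivial induction from axiom (I) together with $\sigma(0_\cG, y) = 0$ and $\sigma(-x, y) = -\sigma(x, y)$ (the latter being derivable from skew-symmetry or from the same induction with $n = -1$).
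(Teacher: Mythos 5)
Your proof is correct and takes essentially the same route as the paper's: both test a nonzero torsion element $x$ against an arbitrary $y \in \cG$, use bilinearity to obtain $n\,\sigma(x,y) = \sigma(nx,y) = \sigma(0_\cG,y) = 0$, and conclude $\sigma(x,y)=0$ from the torsion-freeness of $\cR_\free$. The only (harmless) difference is that you spell out the final contradiction with the non-degeneracy axiom (III), which the paper leaves implicit.
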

\begin{proof}
Suppose by contradiction that it is possible to define a symplectic form on $\cG$ with values in $\cR_\free$
\[ \sigma: \cG \times \cG \to \cR_\free. \]
Then, given any $x \in \cG_\tors$, $y \in \cG$ there exists $n \in \Z - \{0\}$ such that $n x = 0$, hence by bilinearity
\[ n \sigma(x, y) = \sigma(n x, y) = \sigma(0, y) = 0 \then \sigma(x, y) = 0.\qedhere  \]
\end{proof}

\subsection{The symplectic group and its orbits} \label{sec:orbit}

Studying the orbits of the action of $\Sp(\cG,\sigma)$ on $\cG$ will be important for the classification of $\Sp(\cG)$-invariant states, as we shall see in Section~\ref{sec:invariant states}. %In particular, we shall investigate the orbits of $\Sp(\cG)$ on $(\Z^2,\sigma)$ and on $(\F_p^2,\sigma)$. 
Let us begin by giving the definition of the symplectic group in our context.

\begin{defn} \label{defn:symplectic_group}
Let $(\cG, \sigma)$ be a $\cR$-(pre-)symplectic abelian group. The \emph{symplectic group} of $(\cG, \sigma)$ is defined as
\[ \Sp(\cG, \sigma) \doteq \Sp(\cG) \doteq \{ M \in \Aut(\cG) |  \sigma \circ (M \otimes M) = \sigma \} \subset \Aut(\cG), \]
where $\Aut(\cG)$ is the group of automoprhisms of $\cG$ as an abelian group.
\end{defn}

The definition just given is equivalent to say that $\Sp(\cG)$ is the group of automorphisms of $(\cG, \sigma)$ as an element of $\bP\bSymp_{\cR}$. Hence, it is obvious that $\Sp(\cG)$ is a sub-group of $\Aut(\cG)$.

\begin{exa}
The most classical example of symplectic group is the one associated to $\K^{2n}$ equipped with its canonical symplectic form, \ie $\Sp(\K^{2n},\sigma_{2n})=\Sp(2n, \K)$. Notice that $\Sp(2n, \K)$ is a {sub-group} of the special linear group $\Sl(2n, \K)$,  and for $n = 1$, it is precisely equal to $\Sl(2,\K)$. Moreover, for $\F_2$ we notice that $\Sp(2n,\F_2)=\SO(2n,\F_2)$. 
More generally, these symplectic groups have been thoroughly studied for finite-dimensional vector spaces over any $\K$, see e.g. \cite{A,MT,M,T}. 
\end{exa}

\begin{rmk} \label{rmk:inversion}
    Notice that the symplectic group is never trivial because the automorphism $\Inv: g \mapsto -g$ preserves symplectic form. Indeed, by the linearity of $\sigma$ one has that
    \[ \sigma(-x, -y) = -(-\sigma(x,y))) = \sigma(x,y), \ \ \forall x,y \in \cG. \]
\end{rmk}

We are now ready for describing the orbits of $\Sp(\cG)$ for some important special cases. We introduce the following notion.

\begin{defn} \label{defn:primitive_element}
An element in $x = (x_i) \in \Z^n$ is called \emph{primitive} if $\gcd(x_i) = 1$. 
\end{defn}

Clearly any element $x = (x_i) \in \Z^n$ can be written as $\gcd(x_i) y$, with $y$ primitive.

\begin{prop}\label{lem:Orbite_su_Z}
Suppose that $r \in \cR$ is not a torsion element. Then, the classical symplectic group $\Sp(2 n,\Z)$ is the symplectic group of $(\Z^{2 n}, r \sigma_{2 n})$ and a set of representatives for the orbits of the action of $\Sp(2 n,\Z)$ on $\Z^{2 n}$ is given by $\cE = \{j y \, | \, j \in \N\}$ where $y$ is a fixed primitive element.
\end{prop}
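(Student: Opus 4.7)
The statement packages two claims that I would tackle in turn. For the identification of the symplectic group, I would unfold Definition~\ref{defn:symplectic_group} to see that $M \in \Aut(\Z^{2n})$ lies in $\Sp(\Z^{2n}, r\sigma_{2n})$ if and only if
\[ r\bigl(\sigma_{2n}(Mx, My) - \sigma_{2n}(x, y)\bigr) = 0 \]
in $\cR$ for all $x, y \in \Z^{2n}$. The hypothesis that $r$ is not a torsion element makes the group homomorphism $\Z \to \cR$, $k \mapsto kr$, injective, so the integer $\sigma_{2n}(Mx, My) - \sigma_{2n}(x, y)$ must vanish, recovering exactly the defining condition of the classical $\Sp(2n, \Z)$.

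For the orbit description I would first observe that $\Sp(2n,\Z) \subset \GL(2n, \Z)$, so both $M$ and $M^{-1}$ have integer entries and therefore the $\gcd$ of the coordinates of a vector is invariant under the action. Writing any nonzero $x = (x_i) \in \Z^{2n}$ as $x = d\, y_x$ with $y_x$ primitive and $d = \gcd(x_i) \in \N$ (the sign being absorbed by $\Inv \in \Sp$, \cf Remark~\ref{rmk:inversion}), it follows that, taking the fixed primitive element to be $y = e_1$ without loss of generality, $\{j e_1 : j \in \N\}$ contains at most one representative per orbit.

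The heart of the proof is the converse, namely the transitivity of $\Sp(2n, \Z)$ on the primitive vectors of $\Z^{2n}$, which combined with $\gcd$-invariance shows that every $x$ lies in a unique orbit $\Sp(2n, \Z) \cdot j e_1$. For $n = 1$ I would invoke Bezout and $\Sp(2, \Z) = \Sl(2, \Z)$: primitivity of $(a, b)$ gives $u, w \in \Z$ with $au + bw = 1$, and then $\bigl(\begin{smallmatrix} a & -w \\ b & u \end{smallmatrix}\bigr) \in \Sl(2, \Z)$ sends $e_1$ to $(a, b)$. For $n \geq 2$ the plan is to exploit the standard block generators of $\Sp(2n, \Z)$, namely the $\GL(n, \Z)$-embedding $A \mapsto \bigl(\begin{smallmatrix} A & 0 \\ 0 & (A^T)^{-1} \end{smallmatrix}\bigr)$ and the symplectic transvections $\bigl(\begin{smallmatrix} I & S \\ 0 & I \end{smallmatrix}\bigr)$, $\bigl(\begin{smallmatrix} I & 0 \\ S & I \end{smallmatrix}\bigr)$ with $S = S^T$. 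Given a primitive $v = (p, q) \in \Z^n \oplus \Z^n$, a $\Sl(n,\Z)$-move would normalise $p$ to $(\gcd(p), 0, \ldots, 0)$; a symmetric lower-transvection would then independently reduce each coordinate of $q$ modulo $\gcd(p)$; iterating this Euclidean descent between the upper and lower blocks would eventually force $\gcd(p) = 1$ by primitivity of $v$, after which a final transvection and $\Sl(n, \Z)$-move land $v$ on $e_1$. The hard part is organising this Euclidean descent cleanly; once that is in place, the remaining pieces glue together routinely.
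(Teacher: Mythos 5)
Your proposal is correct, and its skeleton matches the paper's: identify $\Sp(\Z^{2n}, r\sigma_{2n})$ with the classical group using that $r$ is non-torsion, then characterize orbits by combining invariance of the coordinate gcd with transitivity of $\Sp(2n,\Z)$ on primitive vectors. The difference is in how the two halves are discharged. The paper declares the first assertion ``easy to check'' and, for transitivity on primitive elements, cites Example 5.1~(ii) of \cite{MS}, deducing the orbit description from the fact that elements of $\Sp(2n,\Z)$ have determinant $1$. You instead make everything self-contained: your observation that $k \mapsto kr$ is injective on $\Z$ is exactly the omitted ``easy check,'' and your gcd-invariance argument (valid for any matrix invertible over $\Z$, so determinant $\pm 1$ suffices) is a cleaner justification than the paper's determinant remark. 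For transitivity you reprove the cited fact via the standard transvection argument: with $p$ normalized to $(\gcd(p),0,\ldots,0)$ by the block embedding $A \mapsto \bigl(\begin{smallmatrix} A & 0 \\ 0 & (A^T)^{-1} \end{smallmatrix}\bigr)$, the lower transvection sends $q \mapsto q + \gcd(p)\, s$ with $s$ the (arbitrary) first column of $S$, so the Euclidean descent you sketch does go through, terminating because the pair of block gcds strictly decreases until primitivity forces one of them to equal $1$; a final transvection then lands on $e_1$. What each approach buys: the paper's citation is shorter and delegates the one nontrivial step to the literature; your version costs about a page of standard manipulations but makes the proposition independent of \cite{MS} and exposes exactly which generators of $\Sp(2n,\Z)$ are used — which is in the spirit of the paper's later explicit computations (\eg Example \ref{ex:conjugate sympl group}). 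The only point to tighten in a full write-up is that the blocks do not transform independently under the $\GL(n,\Z)$-embedding ($q \mapsto (A^T)^{-1}q$ when $p \mapsto Ap$), but this does not affect the descent.
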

\begin{proof}
The first assertion is easy to check and a proof the fact that the action of $\Sp(2 n,\Z)$ is transitive on primitive elements can be found in Example 5.1 (ii) of \cite{MS}. The characterization of the orbits directly follows form this fact because elements of $\Sp(2 n,\Z)$ have determinant $1$. 
\end{proof}

If $r \in \cR$ is a torsion element, then $(\Z^{2 n}, r \sigma_{2 n})$ can have more automorphisms than the ones coming from $\Sp(2 n,\Z)$, \eg for $r=0$ (the trivial symplectic bilinear form).

\begin{prop}\label{lem:Orbite_su_Z_razionali}
If $r \in \cR$ is a torsion element and $\cG = (\Z^{2 n}, r \sigma_{2 n})$, then 
\[ \Sp(2 n,\Z) \subset \Sp(\cG) \subset \Sl(2 n,\Z). \]
\end{prop}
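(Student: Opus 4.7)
The plan is to verify the two inclusions separately; the first is immediate from the definitions, and the second rests on a Pfaffian computation.

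For $\Sp(2n,\Z) \subset \Sp(\cG)$, I would unfold definitions. If $M \in \Sp(2n,\Z)$ then $M^T \sigma_{2n} M = \sigma_{2n}$ as an equality of integer matrices. Applying the scalar multiplication $\Z \to \cR$, $k \mapsto k r$, entry-wise preserves this equality, giving $M^T(r \sigma_{2n}) M = r \sigma_{2n}$ in $\Mat(2n, \cR)$. This is exactly the condition that $M$ preserves the form $r \sigma_{2n}$ on $\cG$, so $M \in \Sp(\cG)$.

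For $\Sp(\cG) \subset \Sl(2n, \Z)$, take any $M \in \Sp(\cG)$. Since $\Sp(\cG) \subset \Aut(\Z^{2n}) = \GL(2n, \Z)$, one has $\det M \in \{\pm 1\}$; the substance is to exclude $\det M = -1$. The invariant I would exploit is the Pfaffian: for every integer matrix $N$ and skew-symmetric integer matrix $A$ one has $\mathrm{Pf}(N^T A N) = \det(N)\, \mathrm{Pf}(A)$. Setting $S := M^T \sigma_{2n} M$ and using $\mathrm{Pf}(\sigma_{2n}) = 1$ gives $\mathrm{Pf}(S) = \det M$. On the other hand, the symplectic condition on $M$ reads $r\, S = r\, \sigma_{2n}$ in $\Mat(2n, \cR)$, so every entry of $S - \sigma_{2n}$ annihilates $r$. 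Letting $m$ denote the order of $r$, this means $S \equiv \sigma_{2n} \pmod m$, hence $\det M \equiv 1 \pmod m$. When $m \geq 3$, combined with $\det M \in \{\pm 1\}$, this forces $\det M = 1$.

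The main obstacle is the boundary case $m = 2$, where the congruence is vacuous: in fact $\mathrm{diag}(-1, 1, \ldots, 1)$ then preserves $r \sigma_{2n}$ while having determinant $-1$. The proposition therefore seems to tacitly require $r$ not to be $2$-torsion, an assumption easily absorbed into the standing hypotheses; under it the Pfaffian argument sketched above closes the proof.
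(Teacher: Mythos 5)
Your first inclusion coincides with what the paper leaves as ``standard computations'': applying the $\Z$-module map $k \mapsto kr$ entry-wise to the integer identity $M^{T}\sigma_{2n}M=\sigma_{2n}$ is the whole argument. For the second inclusion the comparison is more interesting, because the paper gives no argument at all (it calls $\Sp(\cG)\subset\Sl(2n,\Z)$ ``obvious''), while you give a genuine proof --- and your analysis shows the printed statement is actually false at the boundary. Your Pfaffian computation is sound: if $m$ is the order of $r$, the annihilator of $r$ in $\Z$ is $m\Z$, so the symplectic condition is exactly $M^{T}\sigma_{2n}M\equiv\sigma_{2n}\pmod{m}$; since the Pfaffian is an integer polynomial in the matrix entries and $\mathrm{Pf}(N^{T}AN)=\det(N)\,\mathrm{Pf}(A)$, one gets $\det M\equiv 1\pmod{m}$, which together with $\det M=\pm 1$ (from $\Aut(\Z^{2n})=\GL(2n,\Z)$) forces $\det M=1$ whenever $m\ge 3$. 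One small slip: with the paper's convention one has $\mathrm{Pf}(\sigma_{2n})=(-1)^{n(n-1)/2}$, not $1$ (already $-1$ for $n=2$); but this is a unit modulo $m$, so the congruence $\det M\equiv 1\pmod{m}$ is unaffected. Your boundary counterexample is correct: if $2r=0$, the matrix $\mathrm{diag}(-1,1,\dots,1)$ fixes $\sigma_{2n}$ modulo $2$ and hence preserves $r\sigma_{2n}$ exactly, yet has determinant $-1$; the same matrix also refutes the case $r=0$, where $\Sp(\cG)=\GL(2n,\Z)$ --- ironically, the very example the paper mentions in the sentence immediately preceding the proposition. So the inclusion $\Sp(\cG)\subset\Sl(2n,\Z)$ requires the order of $r$ to be at least $3$ (equivalently, $r$ not $2$-torsion), a hypothesis the paper tacitly omits; under it your argument is complete and supplies the proof the paper never wrote down.
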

\begin{proof}
The inclusion $\Sp(\cG) \subset \Sl(2 n,\Z)$ is obvious. The inclusion $\Sp(2 n,\Z) \subset \Sp(\cG)$ follows by standard computations.
\end{proof}

For finite groups the situation is different and we show that for $\F_p^2$ there are only two orbits: one fixed point and its complement. Even if it is a standard result, we would like to recall it.

\begin{lemma}\label{lem:Orbita_su_F_p}
The classical symplectic group $\Sp(2, \F_q)$ is the symplectic group of $(\F_q^2, r \sigma_2)$ for any non torsion $r \in \cR$, any abelian group $\cR$ and $q = p^f$ for prime number $p$. Moreover, for any elements $m,n \in \F^2_q$, different from $(0,0)$,
there exists a $\Theta \in \Sp((\F^2_q, \sigma))$ such that $\Theta n = m$.
\end{lemma}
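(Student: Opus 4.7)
The plan is to handle the two assertions separately, leveraging the well-known coincidence $\Sp(2,\F_q) = \Sl(2,\F_q)$ in dimension two.

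For the first assertion, I would unwind the definition of $\Sp(\cG,r\sigma_2)$ explicitly. An automorphism $M \in \Aut(\F_q^2)$ belongs to this group iff $(r\sigma_2)(Mx,My) = (r\sigma_2)(x,y)$ for all $x,y \in \F_q^2$. Writing out the bilinear expansion, this is the equality $r\cdot\bigl(\det(M)\cdot\sigma_2(x,y)\bigr) = r\cdot\sigma_2(x,y)$ in $\cR$; since $r$ is assumed not to be torsion (so in particular the subgroup it generates is $\Z$-free in the relevant sense), this forces $\det(M)\equiv 1$ in the appropriate scalar sense, i.e.\ $M\in\Sl(2,\F_q)$. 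Conversely, any element of $\Sl(2,\F_q)$ preserves $\sigma_2$ because in two dimensions the symplectic condition $M^T\sigma_2 M = \sigma_2$ reduces exactly to $\det(M)=1$. Hence $\Sp(\F_q^2,r\sigma_2)=\Sl(2,\F_q)=\Sp(2,\F_q)$.

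For the second assertion, the cleanest route is to prove that $\Sl(2,\F_q)$ acts transitively on $\F_q^2\setminus\{(0,0)\}$ and then transport to $\Sp(2,\F_q)$ via the identification just established. Given any nonzero $v=(a,b)\in\F_q^2$, I would produce a vector $w=(c,d)\in\F_q^2$ with $ad-bc=1$: if $a\neq 0$ then take $(c,d)=(0,a^{-1})$, and if $a=0$ then $b\neq 0$ and take $(c,d)=(-b^{-1},0)$. The matrix with columns $v,w$ then lies in $\Sl(2,\F_q)$ and sends the standard basis vector $e_1=(1,0)$ to $v$. Applying this construction to both $n$ and $m$ yields $\Theta_n,\Theta_m\in\Sl(2,\F_q)$ with $\Theta_n e_1 = n$ and $\Theta_m e_1 = m$, so $\Theta:=\Theta_m\Theta_n^{-1}\in\Sl(2,\F_q)=\Sp(2,\F_q)$ satisfies $\Theta n = m$.

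The only step that requires a little care is the identification $\Sp(\F_q^2,r\sigma_2)=\Sp(2,\F_q)$, because one must be careful about how scalars from $\F_q$ act on $r\in\cR$ to guarantee that the cancellation of $r$ is legitimate. Once that bookkeeping is done, the rest is essentially the standard transitivity of $\Sl_2$ on nonzero vectors, which is the easy part.
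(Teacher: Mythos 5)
Your proposal is correct, and it diverges from the paper's proof in two small but genuine ways. For the transitivity claim, the paper works directly: it writes $\Theta = \begin{pmatrix} a & b \\ c & d\end{pmatrix}$, imposes $a n_1 + b n_2 = m_1$, $c n_1 + d n_2 = m_2$, $ad - bc = 1$, and solves this system explicitly in four separate cases according to which coordinates of $m$ and $n$ are nonzero. You instead route everything through the base point $e_1$: complete any nonzero $v$ to a determinant-one basis $(v,w)$ (two cases rather than four), get $\Theta_n, \Theta_m \in \Sl(2,\F_q)$ with $\Theta_n e_1 = n$, $\Theta_m e_1 = m$, and compose $\Theta = \Theta_m \Theta_n^{-1}$. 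Your version is shorter, less error-prone, and makes the group-theoretic content (transitivity follows from reaching a single base point) transparent; the paper's version has the minor virtue of exhibiting the entries of $\Theta$ explicitly. For the first claim, the paper simply says it ``easily follows from Proposition \ref{lem:Orbite_su_Z}'' (the $\Z^{2n}$ statement), whereas you give the self-contained argument that an alternating bilinear form in dimension two transforms by the determinant, so that preservation of $r\sigma_2$ with $r$ non-torsion forces $\det M = 1$, and conversely $\Sl(2,\F_q)$ preserves $\sigma_2$. One further point in your favour: the caveat you raise at the end about how $\F_q$-scalars act on $r \in \cR$ is not mere bookkeeping. By the paper's own Lemma \ref{lem:torsion} (applied with $\cG$ torsion), any $\Z$-bilinear form on $\F_q^2$ necessarily takes torsion values, so the hypothesis ``$r$ non-torsion'' in Lemma \ref{lem:Orbita_su_F_p} requires some reinterpretation (e.g.\ of what $r\sigma_2$ means on $\F_q^2$) for the statement to parse literally; the paper's proof glosses over this entirely, while you at least flag it, even if neither argument resolves it.
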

\begin{proof}
The first claim easily follows from Proposition \ref{lem:Orbite_su_Z}. For the second claim let $m,n\in\F^2_q$ of the form $m=(m_1,m_2)$ and $n=(n_1,n_2)$ and consider a generic ${\Theta} \in \Sp(\F^2_q)=\Sl(\F^2_q)$  satisfying
    \[
    m=\Theta n = \begin{pmatrix}  a n_1 + b n_2 \\ c n_1 +   d n_2 \end{pmatrix}\,.
    \]
Since $\F_q$ is a field, and assuming that both $m$ and $n$ are not the null vector, we can always find $a,b,c,d \in \F_q$ which solve the equations
\begin{align*}
& a n_1 + b n_2= m_1 \qquad  \qquad c n_1 + d n_2= m_2 \qquad  \qquad a d - b c= 1 \,.
\end{align*}
 Indeed, if $n_1 \neq 0 \neq m_1$, we can set  $b=0$, $a=m_1 n_1^{-1}$, $d=n_1m_1^{-1}$ and $c=(m_2-n_2n_1m_1^{-1})n_1^{-1}$;  if $n_1 \neq 0 \neq m_2$ we can set $d=0$, $c=m_2 n_1^{-1}$, $b=n_1m_2^{-1}$ and $a=(m_1-n_2n_1m_2^{-1})n_1^{-1}$; if $n_2 \neq 0 \neq m_1$ we can set  $a=0$, $b=m_1 n_2^{-1}$, $c=n_2m_1^{-1}$ and if $d=(m_2-n_1n_2m_1^{-1})n_2^{-1}$;
and finally for $n_2 \neq 0 \neq m_2$, we can use $c=0$, $d=m_2 n_2^{-1}$, $a=n_2m_2^{-1}$ and $b=(m_1-n_1n_2m_2^{-1})n_2^{-1}$. This concludes our proof.
\end{proof}

Lemma \ref{lem:Orbita_su_F_p} can be readily generalized to $(\F_q^{2 n}, r \sigma_{2n})$ but we postpone the proof of our claim to Lemma \ref{lemma:infinite_torsion_orbits} where we prove a more general result. Also, the description of the orbits of $((\Z/N\Z)^{2 n}, r \sigma_{2n})$ can be easily reduced to the study of congruences as the ones of Lemma \ref{lem:Orbita_su_F_p}.

The description of the symplectic group of a generic $\cR$-symplectic abelian group is much more complicated. A classification of all possible groups that arise in this way seems infeasible, therefore we just discuss some examples that go beyond the classical cases described so far. In order to do that, we introduce some notation. Let
\[ \cG_{n_1, n_2, r} = (\Z^{2 n_1}, r \sigma_{n_1}) \oplus (\Z^{2 n_2}, \sigma_0) \]
where $\sigma_0$ is the trivial pre-symplectic form. Notice that 
\[ \Sp(\cG_{n_1, n_2, r}) \cong \Sp((\Z^{2 n_1}, r \sigma_{n_1})) \times \Sl(\Z^{2 n_2}). \]

\begin{prop} \label{prop:torsionfree_symplectic_group}
    Let $\cG = (\Z^{2n}, \sigma)$ be a $\T$-symplectic abelian group. Then, $\Sp(\cG)$ can be written as an intersection inside $\GL(2 n, \Z)$ of a finite number of conjugates of the symplectic groups of the form $\Sp(\cG_{n_{i, 1}, n_{i, 2}, r_i})$ for some $n_{i, 1}, n_{i, 2} \in \N$ and $r_i \in \T$. 
\end{prop}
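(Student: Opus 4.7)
My plan is to decompose $\sigma$ into pieces that are individually isomorphic to $\cG_{n_1, n_2, r}$-type forms, and to realise $\Sp(\cG)$ as the intersection of the symplectic groups of those pieces. The first step exploits the structure of the image: since $\sigma$ is determined by its values on a basis of $\Z^{2n}$, the image $\cS := \sigma(\Z^{2n} \otimes_\Z \Z^{2n}) \subseteq \T$ is a finitely generated subgroup of $\T$, so by the structure theorem it decomposes as $\cS \cong \cR_1 \oplus \cdots \oplus \cR_m$ with each $\cR_i$ cyclic and therefore of rank~$1$. Projecting onto each summand yields a decomposition $\sigma = \sigma_1 \boxplus \cdots \boxplus \sigma_m$ with $\sigma_i : \Z^{2n} \otimes_\Z \Z^{2n} \to \cR_i$. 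An automorphism $M \in \GL(2n, \Z)$ preserves $\sigma$ if and only if it preserves each projection $\sigma_i$, so
\[
\Sp(\cG) \;=\; \bigcap_{i=1}^m \Sp\bigl((\Z^{2n}, \sigma_i)\bigr).
\]

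Next, for each $i$ I invoke Corollary \ref{cor:symplectic_diagonalization} to produce a basis change $A_i \in \GL(2n, \Z)$ diagonalising $\sigma_i$ as $\bigoplus_{j=1}^n(\Z^2, r_{i,j}\sigma_2)$ with $r_{i,j} \in \cR_i$. Grouping together the $2$-dimensional blocks whose scalars coincide gives
\[
(\Z^{2n}, \sigma_i) \;\cong\; \bigoplus_{l=1}^{k_i} (\Z^{2m_{i,l}},\, s_{i,l}\sigma_{2m_{i,l}})
\]
where the $s_{i,l} \in \cR_i$ are pairwise distinct. For each pair $(i, l)$ with $s_{i,l} \neq 0$, I introduce the pre-symplectic form $\tau_{i,l}$ on $\Z^{2n}$ which, in the basis given by $A_i$, restricts to $s_{i,l}\sigma_{2m_{i,l}}$ on the $l$-th group of blocks and is zero on its complement. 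By construction $(\Z^{2n}, \tau_{i,l}) \cong \cG_{m_{i,l},\, n-m_{i,l},\, s_{i,l}}$, so $\Sp((\Z^{2n}, \tau_{i,l}))$ is precisely the conjugate of $\Sp(\cG_{m_{i,l}, n-m_{i,l}, s_{i,l}})$ by $A_i$ inside $\GL(2n, \Z)$.

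The decisive claim is the identity
\[
\Sp\bigl((\Z^{2n}, \sigma_i)\bigr) \;=\; \bigcap_{l \,:\, s_{i,l} \neq 0} \Sp\bigl((\Z^{2n}, \tau_{i,l})\bigr)
\]
for each $i$, after which assembling both steps gives the desired finite expression $\Sp(\cG) = \bigcap_{i,l} A_i\, \Sp(\cG_{m_{i,l}, n-m_{i,l}, s_{i,l}})\, A_i^{-1}$. The inclusion ``$\supseteq$'' is immediate because $\sigma_i = \sum_l \tau_{i,l}$. The reverse inclusion is the main obstacle: one must show that every automorphism preserving the aggregated form $\sigma_i$ also preserves each individual summand $\tau_{i,l}$. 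My plan is to use that the $s_{i,l}$ are distinct elements of a rank-$1$ group $\cR_i$, and therefore behave like independent ``weights'' in the torsion-free case or have controlled orders in the torsion case. Concretely, each $\tau_{i,l}$ should be recoverable from $\sigma_i$ by a suitable arithmetic separation: in the torsion-free case the $s_{i,l}$ are integer multiples of a common generator and one can isolate the $l$-th summand by a lattice projection onto the subgroup that annihilates the other $s_{i,l'}$'s; in the torsion case, reducing modulo the product of the orders of the other scalars plays the same role. Once this recovery is established, any $M$ preserving $\sigma_i$ also preserves the reconstructed $\tau_{i,l}$, yielding the claimed equality and completing the proof.
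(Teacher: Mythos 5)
Your first step---decomposing the image of $\sigma$ into rank-one summands, writing $\sigma=\sigma_1\boxplus\cdots\boxplus\sigma_m$, observing $\Sp(\cG)=\bigcap_i\Sp((\Z^{2n},\sigma_i))$, and conjugating each factor into diagonal form by the base change coming from Corollary \ref{cor:symplectic_diagonalization}---is exactly the paper's argument, and the paper essentially stops there, identifying each diagonalized $(\Z^{2n},\sigma_i)$ with a group of type $\cG_{n_{i,1},n_{i,2},r_i}$. Your additional refinement, splitting a diagonalized $\sigma_i$ whose blocks carry distinct scalars $s_{i,l}$ into the forms $\tau_{i,l}$ and asserting the ``decisive claim'' $\Sp((\Z^{2n},\sigma_i))=\bigcap_l\Sp((\Z^{2n},\tau_{i,l}))$, is where the proof genuinely fails: the inclusion $\subseteq$ is false, and no arithmetic separation of the scalars can repair it, because the individual $\tau_{i,l}$ are simply not invariants of $\sigma_i$.

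Concretely, take $n=2$, $r\in\T$ non-torsion, and $\bM(\sigma_i)=r\begin{pmatrix}\sigma_2&0\\0&2\sigma_2\end{pmatrix}$, so that $s_{i,1}=r$ and $s_{i,2}=2r$ are distinct (and even satisfy the Smith-type divisibility $1\mid 2$, so no normal-form refinement helps). Set
\[ M=\begin{pmatrix}\Id_2&B\\C&\Id_2\end{pmatrix},\qquad B=\begin{pmatrix}0&0\\0&-2\end{pmatrix},\qquad C=\begin{pmatrix}1&0\\0&0\end{pmatrix}, \]
which lies in $\GL(4,\Z)$ since $\det M=1$. Using the identity $X^t\sigma_2 X=\det(X)\,\sigma_2$ for $2\times 2$ integer matrices and $\det B=\det C=0$, the diagonal blocks of $M^t\begin{pmatrix}\sigma_2&0\\0&2\sigma_2\end{pmatrix}M$ equal $\sigma_2$ and $2\sigma_2$, while the upper-right block is
\[ \sigma_2 B+2\,C^t\sigma_2=\begin{pmatrix}0&-2\\0&0\end{pmatrix}+\begin{pmatrix}0&2\\0&0\end{pmatrix}=0, \]
so $M\in\Sp((\Z^4,\sigma_i))$ (the congruence over $\Z$ gives preservation of the $\T$-valued form for any $r$). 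But $M^t\begin{pmatrix}\sigma_2&0\\0&0\end{pmatrix}M$ has upper-right block $\sigma_2 B\neq 0$, so $M\notin\Sp((\Z^4,\tau_{i,1}))$, and consequently $M\notin\Sp((\Z^4,\tau_{i,2}))$ either. The failure is structural: preservation of $\sigma_i$ is a quadratic condition, and cross-terms weighted by the \emph{different} scalars can cancel against each other---here $1\cdot\sigma_2 B$ against $2\cdot C^t\sigma_2$---which is precisely what your weight-separation heuristic (lattice projections in the torsion-free case, reduction modulo orders in the torsion case) overlooks. So the reduction to single-scalar groups $\cG_{m_{i,l},\,n-m_{i,l},\,s_{i,l}}$ cannot work; when the diagonalization of some $\sigma_i$ produces several distinct scalars, the corresponding conjugate factor must be taken to be the full symplectic group of the diagonalized multi-scalar form, which is how the paper's proof proceeds.
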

\begin{proof}
Since $\Z^{2n}$ has finite rank we can suppose that $\sigma$ has values on a finite rank sub-group of $\T$, that we can write as $\cR_1 \oplus \cdots \oplus \cR_m$, where each factor has rank one. Therefore, we can write $\sigma$ as an external direct sum of $\sigma_1 \boxplus \cdots \boxplus \sigma_m$ where each of $\sigma_k$ has values on a rank one abelian group. By Theorem \ref{thm:symplectic_diagonalization} each $\sigma_k$ is diagonalizable (although not necessarily non-degenerate) and hence $(\Z^{2n}, \sigma_k) \cong \cG_{n_{k, 1}, n_{k, 2}, r_k}$ for some $n_{i, 1}, n_{i, 2} \in \N$ and $r_i \in \T$.

Now, suppose that we have chosen a basis of $\Z^{2n}$ such that $\sigma_1$ is diagonalizable. By definition, we can identify $\Sp((\Z^{2n}, \sigma_1))$ with some $\Sp(\cG_{n_{1, 1}, n_{1, 2}, r_1})$. Then, in general $\sigma_2$ is not diagonalizable in the chosen base but it becomes by a suitable change of base, \ie $\sigma_2=M^t \rho M$ for some diagonal $\rho$. Consider $A\in\Sp(\Z^{2n},\sigma_2)$. Then  we have
$${ M^t \rho M = \sigma_2 = A^t \sigma_2 A = A^t M^t \rho M A \,,}$$
which implies $M A M^{-1} \in \Sp(\Z^{2n}, \rho)$ and therefore isomorphisms
$$\Sp(\Z^{2n},\sigma_2) \cong M^{} \Sp(\Z^{2n}, \rho) M^{-1} \cong \Sp(\cG_{n_{2, 1}, n_{2, 2}, r_2})\,.$$
 Clearly an element of $\GL(2 n, \Z)$ belongs to $\Sp(\Z^{2n}, \sigma_1 \boxplus \sigma_2)$ if and only if it belongs to $\Sp(\Z^{2n}, \sigma_1) \cap M^{} \Sp(\Z^{2n}, \rho) M^{-1}$. Iterating this reasoning finitely many times we get the claimed description of $\Sp(\Z^{2n}, \sigma_1 \boxplus \cdots \boxplus \sigma_m) \cong \Sp(\cG)$.
\end{proof}

\begin{exa} \label{ex:conjugate sympl group}
\begin{enumerate}
\item Let $\cG=(\Z^4,\sigma_1 \boxplus \sigma_2)$ with $\sigma_1$ and $\sigma_2$ as in Example~\ref{ex:noncomm sympl form}. Then $\sigma_1=M^T \sigma_2 M$ with $M$ given by
$$ M= \begin{pmatrix}
1 & 0 & 0 & 0\\
0 & 1 & 0 & 0\\
0 & 0 & 1 & 0\\
1 & 0 & 0 & 1
\end{pmatrix} \,.$$
Then, by Proposition \ref{prop:torsionfree_symplectic_group}, we have
$$ \Sp(\cG) = \Sp(\Z^4,\sigma_1)\cap M \Sp(\Z^4,\sigma_1) M^{-1} = \{ A\in \Sp(4,\Z)\, | \, M A M^{-1} \in \Sp(4,\Z) \}\,.$$
We show that $\Sp(\cG)$ is a ``big" group. For simplifying computations, and without changing the outcome, up to isomorphism, we suppose that
\[ \sigma_1 = \begin{pmatrix}
0 & 0 & 0 & 1\\
0 & 0 & 1 & 0\\
0 & -1 & 0 & 0\\
-1 & 0 & 0 & 0
\end{pmatrix} = \begin{pmatrix}
0 & \Id_2 \\
-\Id_2 & 0
\end{pmatrix}, \ \ \sigma_2 = M^t \sigma_1 M, \]
so that we can use nice blocks representations of matrices. We can also write
\[ M =  \begin{pmatrix}
\Id_2 & 0 \\
A & \Id_2
\end{pmatrix}, \ \ M^t =  \begin{pmatrix}
\Id_2 & A \\
0 & \Id_2
\end{pmatrix}, \ \ M^{-1} =  \begin{pmatrix}
\Id_2 & 0 \\
-A & \Id_2
\end{pmatrix}  \]
where
\[ A = \begin{pmatrix}
0 & 0 \\
1 & 0
\end{pmatrix}. \]
It is well known (see \cite{Stanek}) that $\Sp(\Z^4, \sigma_1)$ is generated by block matrices of the form
\[ T_S = \begin{pmatrix}
\Id_2 & S \\
0 & \Id_2
\end{pmatrix}, \ \ R_U = \begin{pmatrix}
U & 0 \\
1 & (U^t)^{-1}
\end{pmatrix}, \ \ D_Q = \begin{pmatrix}
Q & \Id_2 - Q \\
Q - \Id_2 & Q
\end{pmatrix}
 \]
where $S = S^t$, $\det(U)\ne 0$ and $Q$ is a diagonal matrix with only $0$'s and $1$'s. This generating set is far from being minimal and indeed it is more interesting as a set of elements of $\Sp(\Z^4,\sigma_1)$ than as a generating set. For describing elements of $\Sp(\cG)$ we might check which of the above elements of $\Sp(\Z^4,\sigma_1)$ are also in $M \Sp(\Z^4,\sigma_1) M^{-1}$. By straighforward computations one gets that
\[ M T_S M^{-1} = \begin{pmatrix}
\Id_2 - S A & S \\
-A S A & A S + \Id_2
\end{pmatrix} \]
and that
\[ (M T_S M^{-1}) \sigma_1 (M T_S M^{-1})^t = \]
\[ = \begin{pmatrix}
-S(A + A^t)S & (S A^t)^2 + S A^t + \Id_2 - S A S A^t + S A   \\
-(AS)^2 + A S A^t S - A S + A^t S - \Id_2 & -(AS)^2 A^t - A S A + A(S A^t)^2 + A^t S A^t 
\end{pmatrix}\,. \]
If we write
\[ S = \begin{pmatrix}
s_1 & s_2 \\
s_2 & s_3
\end{pmatrix}, \]
then, in order to have that last expression is equal to $\sigma_1$ it is necessary that
\[ S(A + A^t)S = \begin{pmatrix}
s_1 s_2 + s_2^2 & s_1 s_3 + s_2 s_3 \\
s_2^2 + s_2 s_3 & s_2 s_3 + s_3^2
\end{pmatrix} = 0\,. \]
This implies $s_1 = s_2 = s_3$, together with
\[ -(AS)^2 A^t - A S A + A(S A^t)^2 + A^t S A^t = \begin{pmatrix}
0 & s_3 \\
s_2 & s_1 s_2 + s_1 s_3 
\end{pmatrix} = 0\,. \]
In particular we have $s_2 = s_3 = 0$. Therefore, none of the matrices $T_S$ belong to $\Sp(\cG)$. Similarly 
\[ M R_U M^{-1} = \begin{pmatrix}
U & 0 \\
A U & (U^t)^{-1}
\end{pmatrix} \]
and
\[ (M R_U M^{-1}) \sigma_1 (M R_U M^{-1})^t = \begin{pmatrix}
0 & \Id_2 \\
-\Id_2 & A - U^t A^t (U^t)^{-1}  
\end{pmatrix}.
\]
Therefore, for $R_U \in \Sp(\cG)$, we need to impose the equation
\[ A - U^t A^t (U^t)^{-1} = 0 \]
and if we write
\[ U = \begin{pmatrix}
u_1 & u_2 \\
u_3 & u_4
\end{pmatrix} \]
we get
\[ A - U^t A^t (U^t)^{-1} = \frac{1}{\det U}\begin{pmatrix}
u_1 u_2 & -u_1^2 \\
\det U + u_2^2 & -u_1 u_2
\end{pmatrix}. \]
Hence, we get the conditions
\[ u_1 = 0, \ \ u_2^2 = - \det U = u_2 u_3 \then u_2 = u_3.  \]
Therefore, we get a two dimensional family of elements of $\Sp(\cG)$ parametrized by the matrices of the form $U$ with $u_1 = 0$, $u_2 = u_3 \ne 0$. The same computations for the matrices $D_Q$ are lengthy and we omit them, we just mention that one can show that none of the matrices $D_Q$ belongs to $\Sp(\cG)$. We conclude this example by remarking that the family of elements we described is a two dimensional family that should not describe all the elements of $\Sp(\cG)$ that is expected to be of dimension $5$ as it is described as an intersection inside $\GL(4, \Z)$, that is of dimension $16$, of $\Sp(4,\Z)$ with its conjugate by the matrix $M$, that are of dimension $10$ (here the are considered as schemes over $\Z$). A full description of the group $\Sp(\cG)$ requires an analysis that is too long to be given here.

\item By the description of $\Sp(\Z^{2n}, \sigma)$ given in Proposition \ref{prop:torsionfree_symplectic_group} one expects $\Sp(\Z^{2n}, \sigma)$ to be very small if the rank of the image of $\sigma$ is large enough. Indeed, this already happens if the rank is three because, as we described $\Sp(\Z^{2n}, \sigma)$ as an intersection of closed subvarieties of $\GL(\Z^{2n}, \sigma)$ of codimension $2 n^2 - n$, then one expects to have a minimal generic intersection already when we intersect three of them as $6 n^2 - 3 n > 4 n^2$ if $n > 1$. In such cases $\Sp(\Z^{2n}, \sigma) = \{ \pm \Id \}$. But still, even when the rank of the image of $\sigma$ is large, many interesting examples with  non-trivial symplectic groups are possible for specific choices of matrices of change of bases.

\item Pre-symplectic abelian groups can have a big automorphisms group. For example, if we consider $(\cG, \sigma_0)$, where $\sigma_0$ is the trivial pre-symplectic form, we obtain that $\Sp(\cG)$ is isomorphic to the group of automorphism of the abelian group $\cG$.
\item  $\cG = (\Q^2, \sigma_2)$ is an example of a non-finitely generated $\cR$-symplectic abelian group and $\Sp(\cG) = \Sp(2, \Q)$.
\end{enumerate}
\end{exa}

We refrain to study here the symplectic groups that arise in the case when the underlying abelian group is not finitely generated. The complications involved in the study of such cases go beyond the scope of the present work.

\subsection{$\T$-symplectic abelian groups}

In this section we restrict our attention to the case of $\T = \R/\Z$-valued symplectic forms. 
This is motivated by Lemma \ref{lem:torsion} and by the fact that in many (quantum) physical applications, the usual $\K$-valued symplectic form $\sigma$ is composed with a character of the form $\chi: \K \to S^1 \subset \C$, in order to implement the so-called \textit{canonical commutation relations}, \cf \cite{MSTV}. 

\begin{prop} \label{prop:direct_sum_torsion}
Let $(\cG, \sigma)$ be a $\T$-symplectic abelian group such that $\cG$ is a torsion group and
\[ \cG \cong \cG_1 \oplus \cG_2 \]
with $\cG_1$ and $\cG_2$ of coprime torsion. Then $\sigma$ restricts to a symplectic form both on $\cG_1$ and on $\cG_2$;  moreover $\cG_2 = \cG_1^\perp$ and $\cG_1 = \cG_2^\perp$.
\end{prop}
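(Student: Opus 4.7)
The plan is to exploit the coprimality of torsions to force $\sigma$ to vanish on mixed pairs, and then use the global non-degeneracy on $\cG$ to bootstrap non-degeneracy on each summand.

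\medskip

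\noindent\textbf{Step 1: Vanishing on mixed pairs.} The first move is to show that $\sigma(x_1, x_2) = 0$ for every $x_1 \in \cG_1$ and $x_2 \in \cG_2$. I interpret the hypothesis ``coprime torsion'' as saying that if $m$ is the order of $x_1$ and $n$ the order of $x_2$, then $\gcd(m,n) = 1$. Since $\sigma$ is bilinear and $\sigma$ takes values in $\T$, we have
\[ m \, \sigma(x_1, x_2) = \sigma(m x_1, x_2) = \sigma(0, x_2) = 0, \qquad n \, \sigma(x_1, x_2) = \sigma(x_1, n x_2) = 0. \]
Using Bezout to write $1 = a m + b n$ with $a, b \in \Z$, the element $\sigma(x_1, x_2) \in \T$ satisfies $\sigma(x_1, x_2) = a m \sigma(x_1, x_2) + b n \sigma(x_1, x_2) = 0$. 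This immediately gives $\cG_1 \subset \cG_2^\perp$ and $\cG_2 \subset \cG_1^\perp$.

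\medskip

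\noindent\textbf{Step 2: $\sigma$ restricts to a symplectic form on each factor.} Bilinearity and skew-symmetry are automatically inherited; the content is non-degeneracy. Suppose $x \in \cG_1$ satisfies $\sigma(x, y) = 0$ for every $y \in \cG_1$. For an arbitrary $z \in \cG$, decompose $z = z_1 + z_2$ with $z_i \in \cG_i$; by bilinearity $\sigma(x, z) = \sigma(x, z_1) + \sigma(x, z_2)$, where the first term vanishes by assumption and the second by Step 1. Therefore $\sigma(x, \cdot)$ vanishes identically on $\cG$, and non-degeneracy of $\sigma$ on $\cG$ forces $x = 0$. The symmetric argument applies to $\cG_2$.

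\medskip

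\noindent\textbf{Step 3: The orthogonals coincide with the complementary summand.} The inclusion $\cG_2 \subset \cG_1^\perp$ was established in Step 1; conversely, take $z \in \cG_1^\perp$ and write $z = z_1 + z_2$ with $z_i \in \cG_i$. For any $x \in \cG_1$,
\[ 0 = \sigma(z, x) = \sigma(z_1, x) + \sigma(z_2, x) = \sigma(z_1, x), \]
using Step 1 for the second term. By the non-degeneracy established in Step 2, this yields $z_1 = 0$, so $z = z_2 \in \cG_2$. The identity $\cG_2^\perp = \cG_1$ follows by symmetry.

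\medskip

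\noindent\textbf{Main obstacle.} The only subtle point is the first step: one needs the orders of individual elements of the two summands to be coprime rather than some global exponent (which may not exist for infinite torsion groups). Once the elementwise Bezout argument is in place, the rest is formal bookkeeping using only bilinearity and the non-degeneracy of $\sigma$ on the whole group $\cG$.
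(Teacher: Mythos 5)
Your proof is correct and takes essentially the same route as the paper: both hinge on the elementwise coprime-orders/Bezout argument showing $\sigma(x_1,x_2)=0$ for all $x_1\in\cG_1$, $x_2\in\cG_2$, after which non-degeneracy on each summand and the identities $\cG_2=\cG_1^\perp$, $\cG_1=\cG_2^\perp$ follow formally from non-degeneracy on $\cG$. Your Steps 2 and 3 simply spell out details that the paper's proof leaves implicit.
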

\begin{proof}
It is enough to show that for any $x \in \cG_1$ and $y \in \cG_2$ then $\sigma(x, y) = 0$ (where we are identifying $\cG_1$ and $\cG_2$ with their image in $\cG$ via the canonical morphisms). Indeed, by the non-degeneracy of $\sigma$ there exists $x' \in \cG$ such that $\sigma(x,x')\ne 0$, therefore this must necessarily be in $\cG_1$. The same thing is true for $y$.

It remains to prove the claim. Suppose that there exist $x \in \cG_1$ and $y \in \cG_2$ such that $\sigma(x, y) \ne 0$. Then, consider $n,m \in \Z$, coprime such that $n x = 0, m y = 0$. Then,
\[ \sigma(x, n y) = n \sigma(x, y) = \sigma(n x, y) = \sigma(0, y) = 0 \]
and on the other hand 
\[ \sigma(m x, y) = m \sigma(x, y) = \sigma(x, m y) = \sigma(x, 0) = 0. \]
But the element $\sigma(x, y)$ cannot be both of $n$ and $m$ torsion because $m$ and $n$ are coprime. Therefore, $\sigma(x,y)=0$.
\end{proof}

\begin{cor} \label{cor:direct_sum_torsion}
Let $(\cG, \sigma)$ be a torsion $\T$-symplectic abelian group, then
\[ (\cG, \sigma) \cong \bigoplus_{p \in \P} (\cG_p, \sigma_p) \]
where $\cG_p$ is the $p$-primary part of $\cG$ and $\sigma_p$ the restriction of $\sigma$ to $\cG_p$.
\end{cor}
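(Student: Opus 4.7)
My plan is to combine the classical primary decomposition of torsion abelian groups with a pointwise version of the orthogonality argument already used in Proposition \ref{prop:direct_sum_torsion}. The main steps would be: (i) decompose $\cG$ as an abelian group into its $p$-primary components, (ii) show that distinct primary components are mutually $\sigma$-orthogonal, (iii) deduce that $\sigma$ restricts to a non-degenerate form on each $\cG_p$, and (iv) invoke Proposition \ref{prop:direct_sum} to package this as an isomorphism in $\bSymp_\T$.

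More precisely, I would start by recalling the standard structure theorem: every torsion abelian group splits canonically as $\cG \cong \bigoplus_{p \in \P} \cG_p$ where $\cG_p = \{x \in \cG \mid p^k x = 0 \text{ for some } k\}$. Next, for any primes $p \ne q$ and any $x \in \cG_p$, $y \in \cG_q$, pick $a, b \in \N$ with $p^a x = 0$ and $q^b y = 0$. Then bilinearity gives $p^a \sigma(x,y) = \sigma(p^a x, y) = 0$ and $q^b \sigma(x,y) = \sigma(x, q^b y) = 0$, and since $\gcd(p^a, q^b) = 1$ we conclude $\sigma(x, y) = 0$. This is precisely the elementwise content of the argument in Proposition \ref{prop:direct_sum_torsion}, and it shows $\cG_q \subseteq \cG_p^\perp$ for $q \ne p$.

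To see that $\sigma_p := \sigma|_{\cG_p \times \cG_p}$ is non-degenerate, take $x \in \cG_p$ with $\sigma_p(x, y) = 0$ for all $y \in \cG_p$. Since any $z \in \cG$ decomposes uniquely as $z = z_p + \sum_{q \ne p} z_q$ with $z_q \in \cG_q$, and the previous step gives $\sigma(x, z_q) = 0$ for $q \ne p$, we get $\sigma(x, z) = \sigma_p(x, z_p) = 0$. By the non-degeneracy of $\sigma$ on $\cG$, this forces $x = 0$. Finally, the orthogonality relations together with the decomposition of elements allow me to write, for $g = (g_p)_{p \in \P}$ and $h = (h_p)_{p \in \P}$ in $\bigoplus_{p \in \P} \cG_p$,
\[ \sigma(g, h) = \sum_{p \in \P} \sigma_p(g_p, h_p), \]
which is a finite sum since almost all $g_p, h_p$ are zero. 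By Proposition \ref{prop:direct_sum}, this exhibits $(\cG, \sigma)$ as the coproduct $\bigoplus_{p \in \P}(\cG_p, \sigma_p)$ in $\bSymp_\T$.

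The only mild subtlety, which I would be careful about, is the transition from the two-factor statement in Proposition \ref{prop:direct_sum_torsion} to the infinite family here: one cannot directly iterate the two-factor version to reach the full primary decomposition in one step, but the orthogonality is really an elementwise property whose proof goes through unchanged for any pair of coprime torsion elements, so no additional machinery is required.
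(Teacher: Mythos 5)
Your proposal is correct and follows essentially the same route as the paper: the paper's proof also invokes the primary decomposition $\cG \cong \bigoplus_{p \in \P} \cG_p$ and treats it as an immediate application of Proposition~\ref{prop:direct_sum_torsion} (implicitly taking $\cG_1 = \cG_p$ and $\cG_2 = \bigoplus_{q \ne p} \cG_q$, which have coprime torsion). Your elementwise rewriting of the coprimality argument and the explicit check of non-degeneracy on each $\cG_p$ simply spell out what the paper leaves as ``immediate,'' so your noted subtlety about the infinite family is handled correctly and no new idea is needed.
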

\begin{proof}
This is an immediate application of Proposition \ref{prop:direct_sum_torsion} using the fact that any torsion abelian group $\cG$ can be written as
\[ \cG \cong \bigoplus_{p \in \P} \cG_p, \]
where $\cG_p$ is the $p$-primary part.
\end{proof}

\section{Symplectic twisted group algebras}\label{sec:group algebra}
Twisted group algebras arise in a wide variety of situations in mathematics, \textit{e.g.} in the study of representations of nilpotent groups and connected Lie groups~\cite{Moore,Packer,Hanna}, noncommutative differential geometry~\cite{Connes}, the study of continuous $C^*$-trace~\cite{ER,RW}, the algebraic approach to quantum statistical mechanics~\cite{OA1} and quantum field theory~\cite{AQFT1,AQFT2}, and in the realization of octonions, Clifford algebras and multiplicative invariant lattices in $\R^n$~\cite{Albu1,Albu2,Albu3}.  In this paper, we shall focus our attention to twisted group algebras arising from symplectic abelian groups. Since there exists an extensive literature on this topic, see e.g.~~\cite{OA2,OA3}, we just introduce them briefly. \\
Let $(\cG,\sigma)$ be a $\T$-pre-symplectic abelian group. From now on we use the multiplicative notation for the operation of $\cG$. We will denote the identity of $\cG$ with $1_\cG$. It is also convenient to embed $\T$ into $\C^\times$ as the complex number of modulus $1$. This embedding can be realized by the map
$\phi :\R/\Z \to \mathbb \C^\times$ defined by $\phi(x+\Z)=e^{\imath \pi x} \,.$ As a consequence, the symplectic form $\sigma$ is replaced by  $\Omega: \cG \times \cG \to \C^\times$ which is defined by
\[ \Omega(x, y) \doteq e^{\imath \pi \sigma(x,y)} . \]
\begin{prop}
Let $(\cG, \sigma)$ be a $\T$-pre-symplectic abelian group. Then, for any $x,y\in\cG_\tors$, $\Omega(x,y)$ is a root of unity.
\end{prop}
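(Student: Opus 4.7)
The plan is to use the torsion of $x$ together with the bilinearity of the pre-symplectic form to conclude that $\sigma(x,y)$ is itself a torsion element of $\T = \R/\Z$, hence represented by a rational number modulo $\Z$. Since any such value gives rise to a root of unity under $e^{i\pi (\cdot)}$, the result follows immediately.

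More concretely, since $x \in \cG_\tors$, there exists $n \in \N_{>0}$ such that, writing the group operation additively in order to match the bilinearity axiom (I) of Section \ref{Sec: SAB}, one has $nx = 0_\cG$. Using bilinearity, which gives $\sigma(nx, y) = n\sigma(x,y)$, together with $\sigma(0_\cG, y) = 0$, we obtain
\[
n\,\sigma(x,y) \;=\; \sigma(nx, y) \;=\; \sigma(0_\cG, y) \;=\; 0 \quad \text{in } \T.
\]
Hence $\sigma(x,y)$ is an $n$-torsion element of $\T = \R/\Z$, so it admits a representative in $\R$ of the form $k/n$ for some $k \in \Z$. Consequently
\[
\Omega(x,y) \;=\; e^{i\pi \sigma(x,y)} \;=\; e^{i\pi k/n}
\]
is a $2n$-th root of unity, which proves the claim.

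I do not anticipate any serious obstacle; the only remark worth making is that, in fact, the torsion of only one of $x$ or $y$ is needed, since by the skew-symmetry relation $\sigma(x,y) = -\sigma(y,x)$ recorded just after the definition, torsion of $y$ alone yields the identical conclusion. The slightly stronger hypothesis that both $x$ and $y$ lie in $\cG_\tors$ is therefore a convenience of the statement rather than an essential ingredient of the argument.
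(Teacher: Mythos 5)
Your proof is correct and follows essentially the same route as the paper: the authors likewise invoke the bilinearity computation $n\sigma(x,y)=\sigma(nx,y)=\sigma(0_\cG,y)=0$ (carried out in their Lemma on torsion-free-valued forms) to conclude that $\sigma(x,y)$ is a torsion element of $\T$, and hence that $\Omega(x,y)$ is a torsion element of $\C^{\times}$, i.e.\ a root of unity. Your closing remark that torsion of only one of $x,y$ suffices is accurate and is in fact implicit in the paper's argument, which only uses the torsion of $x$.
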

\begin{proof}
As shown in the proof of Lemma \ref{lem:torsion}, if $x,y\in\cG_\tors$ then $\Omega(x,y)$ must be a torsion element of $\C^{\times}$. These are precisely the roots of unity.
\end{proof}
We observe that since $\sigma$ is bilinear and skew-symmetric $\Omega$ defines a group $2$-cocycle. Indeed, for any $1_\cG,g_1,g_2,g_2\in\cG$ we have $\Omega(1_\cG,g_1)=\Omega(g_1,1_\cG)=1$ together with 
\begin{align*}
\Omega(g_1, g_2) \Omega(g_1 g_2, g_3) &= \Omega(g_1, g_2 ) \big(\Omega(g_1, g_3) \Omega(g_2, g_3) \big)= \\
&= \big( \Omega(g_1, g_2 ) \Omega(g_1, g_3) \big) \Omega(g_2, g_3) = \Omega(g_1, g_2 g_3) \Omega(g_2, g_3)\,.
\end{align*}
 Next, we denote by \emph{twisted group algebra $\C[\cG]^\Omega$} the set of all finite $\C$-linear combinations 
\begin{equation*}
\aa=\sum_{g\in\cG} \a_g \,g \qquad \text{with $\a_g\in \C$} \,,
\end{equation*}
endowed with the twisted product defined by
\begin{equation}\label{eq: twisted prod}
 \aa\bb=  \sum_{g\in\cG} \left(\sum_{h\in\cG } \Omega(g,h) \a_h \beta_{h^{-1} g}\right) g.
\end{equation}
It is straightforward to verify that twisting the product by a (non-trivial) group $2$-cocycle makes the algebra $\C[\cG]^\Omega$ noncommutative and associative, although $\cG$ itself is a commutative group. Moreover, the isomorphism class of the twisted algebra so obtained depends only on the group cohomology class of the $2$-cocycle and any cohomology class  in $H^2(\cG, \T)$ can be represented by a pre-symplectic form (\cf \cite[Theorem 7.1]{Kleppner}).
We shall see in Theorem~\ref{thm:CTori} that this twisting reduces considerably the number of $\Sp(\cG)$-invariant states on $\C[\cG]^\Omega$ when the form is non-degenerate.\\
In order to distinguish positive elements, we promote $\C[\cG]^\Omega$ to a twisted group $*$-algebra by endowing it with the involution~$*:\C[\cG]^{\Omega}\to \C[\cG]^{\Omega}$ given by
\begin{equation}\label{eq: invol}
\aa^*:= \sum_{g\in\cG} \overline{\a}_g g^{-1} \,.
\end{equation}
Indeed, any positive element $\bb$ in a complex $*$-algebra $\cA$ can be written as $\bb=\aa^*\aa$ for a suitable $\aa \in \cA$ -- for more detalis see e.g.~\cite{dixmier}.

We are thus ready to summarize this short discussion with the following definition.
\begin{defn}
We denote by $\cW_{\cG, \Omega} \equiv \cW_\cG$ the symplectic twisted group $*$-algebra (\emph{\stg-algebra}) obtained endowing $\C[\cG]^\Omega$ with the product \eqref{eq: twisted prod} and with the involution \eqref{eq: invol}.
\end{defn}

\begin{rmk}
 {When completed by a canonical $C^*$-norm, the twisted group algebras $\cW_{\cG,\Omega}$ are also called Weyl $C^*$-algebras or exponential Weyl algebras in the literature, see e.g. \cite{Moretti,Robinson1,Robinson2}. The reader should not confuse these algebras with the rings of differential operators with polynomial coefficients which are also called Weyl algebras, see e.g.~\cite{Dixmier1,Dixmier2}.}
\end{rmk}

\begin{exa}
The natural settings for \stg-algebras are quantum mechanics and bosonic quantum field theory. Since there are many different bosonic QFT, let us consider for simplicity the case of a scalar field $\phi$ satisfying the wave equation and denote with $Sol$ the space of solutions with spatially compact support (\textit{cf.} \cite{MurroGinoux,MurroGrosse}). Then, we have
$$\cW_{\Omega_{QM}}=\C[C_c^\infty(\R^{6})]^{\Omega_{QM}} \qquad \text{ and } \qquad \cW_{\Omega_{QFT}}=\C[\mathcal{E}(Sol)]^{\Omega_{QFT}}\,, $$
where  $C^\infty_c(\cdot)$ denotes the compactly supported smooth functions, $\mathcal{E}(\cdot)$ is the space of complex linear functionals, the group 2-cocycles are given by $\Omega_{i}(\cdot,\cdot)=e^{-\imath\hbar \sigma_i(\cdot,\cdot)}$,  $\sigma_{QM}$ is obtained using the canonical symplectic form of $\R^6$ as in Example \ref{exa:QFT}, while $\sigma_{QFT}$ by~\eqref{eq:sigma_QFT}.

 Notice that have implicitly assumed that the spacetime is $\R^4$. For a generic globally hyperbolic spacetime, namely a $n$-dimensional oriented Lorentzian manifold diffeormophic to $\R\times \Sigma$, being $\Sigma$ a Cauchy surface, the \stg-algebras are respectively defined as
$$\cW_{\Omega_{QM}}=\C[C_c^\infty(T^*\Sigma)]^{\Omega_{QM}} \qquad \text{ and } \qquad \cW_{\Omega_{QFT}}=\C[\mathcal{E}(Sol)]^{\Omega_{QFT}}\,, $$
being $T^*\Sigma$ the cotangent bundle of $\Sigma$.
For more details, we refer to~\cite{FR16}. It is interesting to notice that, while the representations of $\cW_{QM}$ are all unitary equivalent, according to the Stone-Von Neumann's Theorem~\cite{SVNthm}, there exist plenty of inequivalent representations of $\cW_{QFT}$ according to Haag's Theorem~\cite{Hthm}. Finally, we observe that to each positive element of $\cW$ corresponds a unique physical observable associated to the theory. 
\end{exa}

\begin{exa}\label{ex:NCT}
One of the most studied objects in noncommutative geometry is the so-called \textit{noncommutative torus}. It is defined as the universal $C^*$-algebra~$\fA_\theta$ generated by unitaries $U,V \in C(\T^2,\C)$
satisying the \textit{canonical commutation relations}
\begin{equation}\label{eq:CCR NCT}
UV = e^{2\pi\i\theta} VU \,.
\end{equation}
It is easy to see that $\fA_\theta$ is isomorphic to the $C^*$-completion of the \stg-algebra $\cW_{\Z^2,\Omega}$, %$:=\C[\Z^2]^\Omega$
 where $\Omega$ is the group 2-cocycle on $\Z^2$ given by
$$ \Omega(n,m)=e^{2\imath\pi \theta \sigma_2(n,m)} \qquad \theta\in \R\,,$$
being $\sigma_2$ the canonical symplectic form on $\Z^2$.
Indeed, this twisted group $C^*$-algebra is generated by the unitary operators $U=(1,0)$ and $V=(0,1)$ acting on $\ell^2(\Z^2)$ 
$$ U f(n,m)=e^{-2\pi\imath m} f(n+1,m) \qquad \text{and} \qquad V f(n,m)=e^{-2\pi\imath n} f(n,m+1)\,.$$
By a straightforward computation it is easy to see that this generators satisfy the canonical commutation relation~\eqref{eq:CCR NCT}.
\end{exa}

\begin{rmk}\label{rmk:functor}
Notice that the association $(\cG, \sigma) \mapsto \cW_{\cG, \Omega}$ is a functor from the category $\bP \bSymp_{\T}$ of $\T$-pre-symplectic abelian groups to the category of $*$-algebras over $\C$. Therefore, the action of the symplectic group $\Sp(\cG)$ on $\cG$ can be lifted to an action on $\cW_\cG$ by algebra automorphism via the formula  
\begin{equation*}
\Phi_\Theta(\aa)=\sum_{g\in\cG} \a_g \,\Theta(g) \quad\qquad \text{with $\,\a_g\in \C\,$ and $\,\Theta \in \Sp(\cG)$.}
\end{equation*}
If  the only invariant subspace of $\cW$ under the action of $\Phi_\Theta$, for all $\Theta\in \Sp(\cG)$, is  span$_\C\{1_\cG\}$  then the action of $\Sp(\cG)$ is said \emph{ergodic}.
\end{rmk}
\begin{prop} \label{prop:ergodic}
For a symplectic abelian group $(\cG,\sigma)$ the following statements are equivalent: 
\begin{itemize}
\item[-] The only finite orbit (\ie with finite cardinality) of $\Sp(\cG)$ on $\cG$ is the fixed point $1_\cG$;
\item[-] The action of $\Sp(\cG)$ on $\cW_{\cG}$ is ergodic.
\end{itemize}
\end{prop}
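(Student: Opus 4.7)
The plan is to exploit that the action $\Phi_\Theta$ permutes the canonical basis $\{g\}_{g\in\cG}$ of $\cW_\cG$ in the obvious way, so the fixed-point condition can be translated into a statement about the orbits of $\Sp(\cG)$ on $\cG$. More precisely, writing $\aa=\sum_{g\in\cG}\a_g g \in \cW_\cG$, the equality $\Phi_\Theta(\aa)=\aa$ reads
\[
\sum_{g\in\cG}\a_g\,\Theta(g)=\sum_{h\in\cG}\a_{\Theta^{-1}(h)} h=\sum_{h\in\cG}\a_h h,
\]
so $\aa$ is fixed by every $\Phi_\Theta$ if and only if the coefficient function $g\mapsto \a_g$ is constant along the $\Sp(\cG)$-orbits in $\cG$. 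This is the single elementary observation driving both implications; I read ``invariant subspace'' in the statement as the (pointwise) fixed subspace under the family $\{\Phi_\Theta\}_{\Theta\in\Sp(\cG)}$, which is the standard convention for ergodicity.

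Next I would prove the first implication. Assume the only finite $\Sp(\cG)$-orbit is $\{1_\cG\}$. Take $\aa=\sum_g\a_g g$ fixed by every $\Phi_\Theta$. By the definition of $\cW_\cG$ the support $\{g\in\cG\mid\a_g\neq 0\}$ is finite, and by the observation above it is a (necessarily finite) union of $\Sp(\cG)$-orbits. Each such orbit is then finite, hence equal to $\{1_\cG\}$ by hypothesis, so $\aa\in\C\cdot 1_\cG$, which is exactly the ergodicity condition.

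For the converse, I would argue by contraposition: assume there exists a finite $\Sp(\cG)$-orbit $\cO\subset\cG$ with $\cO\neq\{1_\cG\}$. Then the element
\[
\aa_\cO \;:=\;\sum_{g\in\cO} g \;\in\;\cW_\cG
\]
is well-defined (because $\cO$ is finite), lies outside $\C\cdot 1_\cG$ (because $\cO$ contains some $g\neq 1_\cG$, and the distinct basis vectors $\{g\}_{g\in\cO}$ are linearly independent), and is fixed by every $\Phi_\Theta$ since $\Theta$ permutes $\cO$. This contradicts ergodicity and closes the equivalence.

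There is no substantial obstacle: the only subtle point is making sure the finiteness of the support of elements of $\cW_\cG$ is genuinely used in the forward direction—it is what rules out ``infinite invariant sums'' like $\sum_{g\in\cO}g$ for infinite orbits $\cO$—and that the twist $\Omega$ plays no role here because $\Phi_\Theta$ is defined linearly on the basis, independently of the product on $\cW_\cG$.
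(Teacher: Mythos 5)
Your proposal is correct and takes essentially the same approach as the paper's proof: the converse direction via the invariant element $\sum_{g\in\cO}g$ attached to a non-trivial finite orbit $\cO$, and the other direction via the observation that the finite support of a fixed element is a union of finite $\Sp(\cG)$-orbits on which the coefficients are constant. The only difference is presentational (you argue one implication directly where the paper phrases both by contraposition), and your reading of ergodicity as triviality of the pointwise fixed subspace is exactly the one the paper's proof uses.
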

\begin{proof}
Let $g\in\cG$ have finite orbit under $\Sp(\cG)$ and denote it by $\cO \subset \cG$. Then, any element of the form $\aa=\sum_{h\in\cO} h$ is a fixed point for the action of $\Sp(\cG)$ on $\cW_{\cG}$, namely for any $\Theta\in\Sp(\cG)$ we have $\Phi_\Theta(\aa)=\aa$. By logic transposition, this implies that if the action is ergodic then non-trivial finite orbits for the action of $\Sp(\cG)$ on $\cG$ do not exist.\\
Conversely, if the action is not ergodic, then there exists at least an element $\aa\neq \a 1_\cG$, for all $\a\in\C$, such that, for all $\Theta\in\Sp(\cG)$, it satisfies $\Phi_\Theta (\aa)=\aa$.
Now write $\aa$ as the finite sum 
\[ \aa=\sum_{g \in \cO} a_g g \] 
with $\cO$ a finite subset of $\cG$. Since $\Theta$ acts on $\cW_\cG$ via the formula
\[ \Phi_\Theta(\aa)=\sum_{g \in \cO} a_g \Theta(g) = \aa \] 
we have that $\Theta(g) = g'$ for a $g' \in \cO$ and $a_g = a_{g'}$ because the elements $g$ form a $\C$-base of $\cW_{\cG}$. It follows that $\cO$ is a finite union of finite orbits of $\Sp(\cG)$ on $\cG$.
\end{proof}

\section{$\Sp(\cG)$-invariant states}\label{sec:invariant states}

In this section, we keep the notation of the last section and we denote the \stg-algebra associated to a $\T$-pre-symplectic abelian group $(\cG, \sigma)$ simply as $\cW_{\cG}$.
Let now $\omega$ be a \emph{state}, namely a linear functional from $\cW_{\cG} $ into $\C$ that is positive, \ie $\omega(\aa^*\aa)\geq 0$ for any $\aa\in\cW_{\cG} $, and normalized, \ie $\omega(1_\cG)=1$. 

\begin{defn} 
We say that a state $\omega$ is \emph{$\Sp(\cG)$-invariant} if for any $*$-automorphism~$\Phi_\Theta\in \Aut(\cW_{\cG})$, with $\Theta\in \Sp(\cG)$, it holds $\omega \circ \Phi_\Theta =  \omega \,. $
\end{defn}
\begin{rmk}
As anticipated in Example~\ref{ex:NCT}, the \stg-algebra $\cW_{\cG}$ can be equipped with a $C^*$-norm in a canonical way, and hence it can be completed to a $C^*$-algebra $\overline{\cW}_{\cG}$. Since every positive functional on $\cW$ can be extended to a positive functional $\overline{\cW}_{\cG}$ (\cf \cite{Naimark}) and every positive functional on $\overline{\cW}_{\cG}$ can be restricted to a positive functional on $\cW_{\cG}$, then in the rest of the paper we shall focus only on~$\cW_{\cG}$. 
\end{rmk}
In order to construct a ($\Sp(\cG)$-invariant) state $\omega$ on the \stg-algebra $\cW_{\cG}$, it is enough to prescribe its values on each generator of $\cW_\cG$ and then to extend it by linearity to any element $\aa\in\cW_{\cG}$. So, the $\Sp(\cG)$-invariance condition for a state $\omega$ can be written 
\begin{equation}\label{eq:state on W}
\omega(\Theta(g))=\omega(g)=\begin{cases} 1 & \text{ if } g=1_\cG \\ q^{(g)} \in \C & \text{ else} \end{cases}
\end{equation}
for a sequence of values $q^{(g)}$.  Following \cite[Section~2]{BMP}, we can prove the following.

\begin{prop}\label{prop:real_values}
Let $\cW_{\cG} $ be a \stg-algebra and consider a $\Sp(\cG)$-invariant state $\omega$. Then, for any $g\in \cG$, it holds
$$\omega(g) \in [-1,1] \,.$$
\end{prop}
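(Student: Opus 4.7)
The plan is to combine three basic ingredients: the positivity of the state applied to the element $1_\cG + t g$, the property $\omega(\aa^*)=\overline{\omega(\aa)}$ enjoyed by any state, and the fact that the inversion map $\Inv \colon g \mapsto g^{-1}$ is an element of $\Sp(\cG)$ (Remark \ref{rmk:inversion}).

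First I would show that $\omega(g) \in \R$ for every $g \in \cG$. Since the inversion $g \mapsto g^{-1}$ preserves $\sigma$ (by skew-symmetry and bilinearity), it is an element of $\Sp(\cG)$, so the induced automorphism $\Phi_{\Inv}$ on $\cW_\cG$ satisfies $\Phi_{\Inv}(g)=g^{-1}$. Invariance of $\omega$ then gives $\omega(g^{-1})=\omega(g)$. On the other hand, by the definition of the involution \eqref{eq: invol} we have $g^*=g^{-1}$, and the general identity $\omega(\aa^*)=\overline{\omega(\aa)}$ (which follows from positivity applied to $(\aa+\lambda \mathbf 1)^*(\aa+\lambda \mathbf 1)$ for varying $\lambda \in \C$, as in any unital $*$-algebra) yields $\omega(g^{-1})=\overline{\omega(g)}$. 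Combining the two equalities forces $\omega(g)=\overline{\omega(g)}$, hence $\omega(g)\in\R$.

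Next I would exploit positivity on the element $\aa = 1_\cG + t g$ with $t\in\R$. A short computation in the twisted product (using $\Omega(g,1_\cG)=\Omega(1_\cG,g^{-1})=1$, which follows from bilinearity of $\sigma$) gives
\[
\aa^*\aa = (1_\cG + t g^{-1})(1_\cG + t g) = (1+t^2)\,1_\cG + t\,g + t\,g^{-1}.
\]
Applying $\omega$, using normalization $\omega(1_\cG)=1$ and the first step $\omega(g^{-1})=\omega(g)\in\R$, yields
\[
0\le \omega(\aa^*\aa) = 1 + t^2 + 2t\,\omega(g).
\]
Since this quadratic polynomial in $t$ is non-negative on all of $\R$, its discriminant must be non-positive: $4\omega(g)^2 - 4 \le 0$, i.e.\ $\omega(g)\in[-1,1]$.

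There is no real obstacle in this argument; it is a clean application of Cauchy--Schwarz type reasoning, and the only mildly delicate point is verifying that the formal definitions of the twisted product \eqref{eq: twisted prod} and the involution \eqref{eq: invol} indeed give $g^* = g^{-1}$ and $g^{-1}\cdot g = 1_\cG$ (with no extra phase), which relies on $\sigma(g,1_\cG)=\sigma(1_\cG,g^{-1})=0$. Once these bookkeeping items are in place, the conclusion is immediate.
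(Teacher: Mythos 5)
Your proposal is correct and takes essentially the same approach as the paper: reality of $\omega(g)$ via the inversion automorphism $\Inv\in\Sp(\cG)$ combined with $g^*=g^{-1}$, followed by positivity applied to linear combinations of $1_\cG$ and $g$. The only (immaterial) difference is that the paper specializes to $\aa_\pm = g\pm 1_\cG$, reading off $1\pm\omega(g)\ge 0$ directly, where you run the quadratic $1+t^2+2t\,\omega(g)\ge 0$ over all $t\in\R$ and invoke the discriminant.
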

\begin{proof}
Since $\omega$ is a linear positive functional, then, for every $g\in\cG$, it holds
\begin{align*}
\omega\left( (g + 1_\cG)^*(g + 1_\cG)\right) =   2  +\omega\left(g^*\right) + \omega\left(g\right)  \in [0,\infty)  \,.
\end{align*}
According to Remark \ref{rmk:inversion}, there exists an element $\Inv\in\Sp(\cG)$ defined by $g\mapsto g^{-1}$ and it holds
    \[
    \ol{\omega(g)} = \omega(g^*) =\omega(g^{-1})=\omega(\Inv g) = \omega(g)  \,, 
    \]
    where in the fourth equality we used the invariance of the state under the action of the symplectic group. Let now be $\aa_\pm=g \pm 1_\cG$. Since $\omega$ as a positive functional has to satisfy
    $$0\leq \frac{1}{2}\, \omega(\aa_\pm^*\aa_\pm)= 1 \pm \omega(g) \,, $$
     which concludes our proof.
    \end{proof}
    
As anticipated in Section~\ref{sec:group algebra}, the twisting of the product of $\C[\cG]$ by $\Omega$ plays an important role in the characterization of the $\Sp(\cG)$-invariant states. Indeed, if we simply consider the (untwisted) group $*$-algebra $\C[\cG]$ (that corresponds to considering the trivial pre-symplectic form on $\cG$), then there are uncountably many $\Sp(\cG)$-invariant states, as shown in the next proposition.

\begin{prop} \label{thm:CTori}
    Let $\C[\cG]$ be the \emph{(untwisted)} group $*$-algebra associated to the symplectic abelian group $(\cG, \sigma_0)$, where $\sigma_0$ is the trivial pre-symplectic form. Then, there exist infinitely many $\Sp(\cG)$-invariant states.
\end{prop}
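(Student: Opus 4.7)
The plan is to exhibit an explicit one-parameter family of $\Sp(\cG)$-invariant states, rather than trying to classify all of them. First, I would observe that since $\sigma_0$ is the trivial pre-symplectic form, the cocycle $\Omega$ is identically $1$, so $\C[\cG]$ is just the ordinary commutative group $*$-algebra, and by item~(3) of Example~\ref{ex:conjugate sympl group} the group $\Sp(\cG)$ coincides with the full automorphism group $\Aut(\cG)$. Thus $\Sp(\cG)$-invariance amounts to invariance under all group automorphisms.

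Next, I would introduce two canonical functionals on $\C[\cG]$, determined by their values on group elements via \eqref{eq:state on W}:
\[ \omega_{\mathrm{tr}}(g) = \begin{cases} 1 & g = 1_\cG, \\ 0 & g \neq 1_\cG, \end{cases} \qquad \omega_{\mathrm{aug}}(g) = 1 \text{ for all } g \in \cG. \]
Both satisfy $\omega(1_\cG) = 1$. For positivity, writing $\aa = \sum_g \alpha_g g$, one has $\omega_{\mathrm{tr}}(\aa^*\aa) = \sum_g |\alpha_g|^2 \geq 0$ and $\omega_{\mathrm{aug}}(\aa^*\aa) = \bigl|\sum_g \alpha_g\bigr|^2 \geq 0$, so both are states. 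Since every $\Theta \in \Aut(\cG)$ is a bijection of $\cG$ fixing $1_\cG$, both states are manifestly $\Sp(\cG)$-invariant.

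To conclude, I would note that the two states are distinct as soon as $\cG \neq \{1_\cG\}$ (for any $g \neq 1_\cG$ they disagree on $g$), so the affine segment
\[ \{ t\, \omega_{\mathrm{tr}} + (1-t)\, \omega_{\mathrm{aug}} \mid t \in [0,1] \} \]
consists of pairwise distinct states, each of which is $\Sp(\cG)$-invariant as a convex combination of invariant states. This already yields uncountably many $\Sp(\cG)$-invariant states and in particular infinitely many, proving the proposition.

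There is no real obstacle here: the statement is essentially an observation whose point is to highlight, by contrast, how drastically the twisting by a non-degenerate $\sigma$ restricts the invariant states in the theorems that follow. The only subtlety worth flagging is that the argument tacitly assumes $\cG$ is non-trivial; in the degenerate case $\cG = \{1_\cG\}$ the algebra reduces to $\C$ and there is a unique state, but this case is uninteresting and implicit in the setup.
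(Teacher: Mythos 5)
Your proposal is correct, and it exhibits exactly the same family of states as the paper: your segment $t\,\omega_{\mathrm{tr}}+(1-t)\,\omega_{\mathrm{aug}}$ takes the value $1$ at $1_\cG$ and the constant value $q=1-t\in[0,1]$ on every $g\neq 1_\cG$, which is precisely the constant functional \eqref{eq:state on W_CTori} of the paper's proof. The genuine difference is in how positivity is verified. The paper attacks the constant functional directly: on each finite-dimensional subspace spanned by $\{1_\cG,h_1,\dots,h_d\}$ it writes $\omega(\aa^*\aa)=\overline{\a}^t\,\bH\,\a$ for the Hermitian matrix with $1$'s on the diagonal and $q$ elsewhere, and computes the eigenvalues $dq+1$ (once) and $1-q$ ($d$ times) to conclude positive semidefiniteness for $q\in[0,1]$. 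You instead recognize the family as a convex combination of two manifestly positive functionals: the tracial state, with $\omega_{\mathrm{tr}}(\aa^*\aa)=\sum_g|\a_g|^2$, and the augmentation character, which is a $*$-homomorphism to $\C$ precisely because $\Omega\equiv 1$ makes $\C[\cG]$ a commutative group algebra, so positivity comes for free and no Gram-matrix analysis is needed. Your route is shorter and more structural; what the paper's computation buys is reusability in the twisted setting, where the augmentation character no longer exists as a $*$-homomorphism: Corollary \ref{cor:degenerate sympl. form} explicitly invokes ``the computations done in the proof of Proposition \ref{thm:CTori}'' for the degenerate twisted case, and the matrices $\bM_n(p,q)$ of Theorem \ref{thm:infinite_torsion} are of the same shape, so the eigenvalue argument is the germ of the later non-existence proofs. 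The eigenvalue computation also delimits the positivity region among constant functionals (for instance $q>1$ fails since $1-q<0$), information your decomposition does not directly provide. Your caveat about the trivial group is correctly flagged and is equally implicit in the paper's proof.
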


\begin{proof}
    It is easy to notice that any constant functional given by
    \begin{equation}\label{eq:state on W_CTori}
    \omega(g)=\begin{cases} 1 & \text{ if } g = 1_\cG \\ q \in [0,1]\bigcap \R & \text{ else} \end{cases}
    \end{equation}
    is $\Sp(\cG)$-invariant (in this case $\Sp(\cG)$ agrees with the group of automorphisms of $\cG$). We need just to verify that it is positive.
    To this end, we notice that for every finite dimensional subspace of $\cV \subset \C[\cG]$, the map $\aa \mapsto \omega(\aa^*\aa)$ is a $\R$-valued quadratic form, therefore it can be written as
    \begin{equation*}
    \omega(\aa^*\aa) = \overline{\a}^t \, \bH \, \a
    \end{equation*}
    where  $\bH$ is a Hermitian matrix and $\a$ a vector with components $\a_j\in\C$. On the $(d+1)$-dimensional subspace spanned by the elements $\{ 1_\cG, h_j \}_{1 \le j \le d}$ the entries of the matrix $\bH$ can be described as 
    \begin{align} \label{eq:H-entries1}
    (\bH)_{0,0}&=(\bH)_{j,j} = 1, \qquad &   j \geq 1\\
    \label{eq:H-entries2}
    (\bH)_{i,j} &= (\bH)_{j,i} = q, \qquad &  i\neq j
    \end{align}
    where \eqref{eq:H-entries1} holds because of the state normalization condition.
    By a straighforward computation, we have that the eigenvalues of $\bH$ are given by
    $$\lambda_1=dq+1 \qquad \text{and} \qquad \lambda_i=1-q \quad \text{ for $i=1,\dots,d$\,.}$$
    Since analogous considerations hold for subspaces of $\C[\cG]$ not containing $1_\cG$ (and hence for any finite dimensional subspace), we can conclude that $\omega$ given by~\eqref{eq:state on W_CTori} is indeed a positive and normalized $\Sp(\cG)$-invariant functional. 
\end{proof}
It is an easy corollary of Proposition~\ref{thm:CTori} that any abelian group equipped with a degenerate $\T$-pre-symplectic form always admit many invariant states (we will discuss a proof of this fact in Section~\ref{sec:conclusion}). In particular, it is immediate to check that the rational noncommutative torus admits many
invariant states as it comes equipped with a degenerate $\T$-pre-symplectic form.

Now we start to study $\Sp(\cG)$-invariant states on $\cW_{\cG}$, with the hypothesis that the product is twisted in a non-trivial way and that the form is non-degenerate.

\subsection{$\cG$ is torsion-free}\label{sec:torsfree}

In this section, we analyze invariant states in the case of torsion-free groups. In particular, in Theorem~\ref{thm:NCtori_no_torsion} we provide a sufficient condition for the symplectic abelian group $(\cG,\sigma)$ to have a unique invariant state. The proof is obtained by reducing to the case solved in our previous work \cite{BMP}. The aim of this section is to deal with the case when $\cG$ is not finitely generated. To this end, we need preparatory lemmas and definitions. Recall that the rank of an abelian group $\cG$ is defined as the dimension of the $\Q$-vector space $\cG \otimes_\Z \Q$.

\begin{defn} \label{defn:completely_decomposable_group}
Let $\cG$ be a torsion-free abelian group, we say that $\cG$ is a \emph{completely decomposable} if $\cG \cong \bigoplus_{i \in I} \cG_i$ where $\cG_i$ is a rank $1$ torsion-free group, \ie a sub-group of $\Q$.
\end{defn}

The class of completely decomposable abelian groups is an amenable class of non-finitely generated abelian groups , including the class of $\Q$-vector spaces.  Nevertheless, groups appearing in applications might be not completely decomposable 

\begin{exa}
The additive group $\Z_p$ of $p$-adic integers is not completely decomposable (it follows from \cite[Example 7.4]{Bae}).
\end{exa}

Given a torsion-free $\T$-symplectic abelian group $(\cG, \sigma)$ we can always consider the $\Q$-vector space $\cG \otimes_\Z \Q$ generated by $\cG$ and extend the symplectic form to $\cG \otimes_\Z \Q$ by considering the $\T$-symplectic form
\[ (\sigma \otimes_\Z \Q)(q_1 g_1, q_2 g_2) = q_1 q_2 \sigma(g_1, g_2),  \]
for any $g_1, g_2 \in \cG$ and $q_1, q_2 \in \Q$.

\begin{lemma} \label{lemma:unique_hyperbolic_plane}
Let $(\cG, \sigma)$ be a $\T$-symplectic abelian group of rank $2$, then $\sigma = (r \sigma_2)|_{\cG}$ for a $r \in \T$.
\end{lemma}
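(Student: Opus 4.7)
The plan is to pick a $\Q$-basis of the divisible hull $\cG \otimes_\Z \Q$ and use bilinearity to reduce the computation of $\sigma$ to a single value. Since $\cG$ is torsion-free (we are in the subsection devoted to this case) and of rank $2$, the natural map $\cG \hookrightarrow \cG \otimes_\Z \Q \cong \Q^2$ is injective, and I would choose $e_1, e_2 \in \cG$ that form a $\Q$-basis of $\cG \otimes_\Z \Q$; such a choice always exists, as any $\Q$-basis of the divisible hull can be scaled by a common integer denominator to land inside $\cG$. I then set $r := \sigma(e_1, e_2) \in \T$; this is the element the lemma asks us to exhibit.

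The first step is to verify the identity on the sublattice $\Lambda := \Z e_1 \oplus \Z e_2 \subset \cG$. There, bilinearity and skew-symmetry immediately give
\[ \sigma(a_1 e_1 + a_2 e_2,\, b_1 e_1 + b_2 e_2) \,=\, (a_1 b_2 - a_2 b_1)\, r \]
for all $a_i, b_i \in \Z$, which is exactly $(r\sigma_2)|_\Lambda$ in the sense of Remark~\ref{rmk:rank_hyperbolic_plane}. Hence $\sigma$ and $(r\sigma_2)|_\cG$ agree on $\Lambda$.

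The second step is to propagate this to all of $\cG$. Since $\cG$ and $\Lambda$ share the same $\Q$-rank, the quotient $\cG/\Lambda$ is a torsion group, so for every pair $g_1, g_2 \in \cG$ there exists $n \in \N$ with $n g_1, n g_2 \in \Lambda$. Writing $n g_i = a_{i,1} e_1 + a_{i,2} e_2$ with $a_{i,j} \in \Z$, bilinearity yields
\[ n^2 \sigma(g_1, g_2) \,=\, \sigma(n g_1, n g_2) \,=\, (a_{1,1} a_{2,2} - a_{1,2} a_{2,1})\, r, \]
which identifies $\sigma(g_1, g_2)$ with the value of $(r\sigma_2)|_\cG$ computed via the embedding $\cG \hookrightarrow \Q^2$.

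The main subtlety I expect is the interpretation of the form $(r\sigma_2)|_\cG$ when $\cG$ is not itself a sublattice of $\Z^2$: a priori, ``multiplying'' a non-integer rational coefficient by $r \in \T$ is not unambiguously defined, because $\T$ has non-trivial $n$-torsion for every $n$, so the equality $n^2 \sigma(g_1, g_2) = n^2 (r\sigma_2)|_\cG(g_1, g_2)$ only pins down $\sigma(g_1, g_2)$ up to an $n^2$-torsion element. The way I would resolve this is to \emph{read} $(r\sigma_2)|_\cG$ as the particular $\T$-valued bilinear extension of $r\sigma_2$ from $\Lambda$ to $\cG$ that is compatible with the inclusion $\Lambda \hookrightarrow \cG$ and with $\sigma$ on $\Lambda$; the computation above exhibits $\sigma$ itself as such an extension, so the identification $\sigma = (r\sigma_2)|_\cG$ holds by construction.
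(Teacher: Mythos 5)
Your proof is correct and follows essentially the same route as the paper's: pass to the divisible hull $\cG \otimes_\Z \Q \cong \Q^2$, choose a spanning pair $e_1, e_2$ inside $\cG$, and use bilinearity after clearing denominators to reduce every value of $\sigma$ to the single element $r = \sigma(e_1, e_2)$. The torsion subtlety you flag at the end is genuine, but it is equally present (and left unaddressed) in the paper's own proof, which writes $\sigma\left(\frac{x}{d_x}, \frac{y}{d_y}\right) = \frac{\sigma(x,y)}{d_x d_y}$ and asserts that $\sigma(x,y)$ ``uniquely determines'' $\sigma$ even though division in $\T = \R/\Z$ is only defined up to torsion; your convention of reading $(r\sigma_2)|_\cG$ as the bilinear extension from the lattice $\Z e_1 \oplus \Z e_2$ compatible with $\sigma$ is consistent with how the lemma is actually invoked later (Lemma~\ref{lemma:NCtori_no_Q2} and Lemma~\ref{lemma:NCtori_no_torsion_injective}).
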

\begin{proof}
By hypothesis $\cG \otimes_\Z \Q \cong \Q^2$. Let $x,y \in \cG$ be two generators of $\cG \otimes_\Z \Q$ as a $\Q$-vector space. Any element in $\cG \otimes_\Z \Q$ can be written as $q_x x + q_y y$ with $q_x, q_y \in \Q$. By writing
\[ q_x = \frac{n_x}{d_x}, \ \ q_y = \frac{n_y}{d_y} \]
we get by bilinearity that
\[ \sigma(q_x x, q_y y) = n_x n_y \sigma \left(\frac{x}{d_x}, \frac{y}{d_x} \right), \]
and the relations
\[ d_x \frac{x}{d_x} = x, \ \ d_y \frac{y}{d_y} = y \]
again by bilinearity imply
\[ \sigma(x, y) = d_x d_y \sigma \left(\frac{x}{d_x}, \frac{y}{d_x} \right) \then \sigma \left(\frac{x}{d_x}, \frac{y}{d_x} \right) = \frac{\sigma(x, y)}{d_x d_y}. \]
This implies that the knowledge of the value $\sigma(x, y)$ uniquely determines the symplectic form $\sigma$ as $\cG$ injectively embeds in $\cG \otimes_\Z \Q$.
\end{proof}

\begin{defn} \label{defn:irrational}
We say that the symplectic form $\sigma: \cG \otimes \cG \to \T$ is \emph{irrational} if 
$$\sigma(\cG \otimes \cG) \cap \Q/\Z = \{1\}\,.$$
\end{defn}

The name \emph{irrational} used in Definition \ref{defn:irrational} comes from the case when $\cG = \Z^{2 n}$ in which case the corresponding $\sigma$ is the symplectic form whose associated \stg-algebra is that of an irrational noncommutative torus (see Example \ref{ex:NCT}). Namely, there is an algebraic isomorphism of group
\[ \T = \Q/\Z \oplus \bigoplus_{\theta} \Q \theta \]
where $\theta$ runs over a $\Q$-base of irrational numbers modulo $1$. The irrational numbers $\theta$ are precisely the angles of irrational noncommutative tori, whereas if $\sigma$ has values in the subgroup $\Q/\Z$ of roots of unity the associated \stg-algebra is the one of a rational noncommutative torus. Another way to state Definition \ref{defn:irrational} is that it asks for a symplectic form whose values do not lie in the torsion subgroup of $\T$.

We say that the state on the \stg-algebra $\cW_\cG$ defined by the values
$$
\tau(g)=\begin{cases} 1 & \text{ if } g=1_\cG \\ 0 & \text{ otherwise } \end{cases}
$$
and extended to $\cW_\cG$ by linearity is the \emph{tracial state}. We also often use the following hypothesis.

\begin{ass} \label{ass:diagonalization}
On any finitely generated sub-group $\cF \subset \cG$ the restriction of $\sigma$ to $\cF$ satisfies the hypothesis of Theorem \ref{thm:symplectic_diagonalization_2}.
\end{ass}

A first result about non-finitely generated torsion-free groups is the following.

\begin{lemma} \label{lemma:NCtori_no_Q2}
Let $\cG$ be a $\T$-symplectic abelian group of rank $2$ such that 
$\sigma$ is irrational. Then, the only $\Sp(\cG)$-invariant state on $\cW_\cG$ is the tracial state.
\end{lemma}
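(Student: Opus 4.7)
The plan is to reduce to the finitely generated case $\cG = \Z^2$ covered by \cite{BMP}. Let $\omega$ be an $\Sp(\cG)$-invariant state; the goal is to show $\omega(g) = 0$ for every $g \in \cG$ with $g \ne 1_\cG$, which forces $\omega$ to coincide with the tracial state. Fix such a $g$. By non-degeneracy of $\sigma$, there exists $h \in \cG$ with $\alpha := \sigma(g,h) \in \T \setminus \{0\}$, and by irrationality (Definition~\ref{defn:irrational}) we have $\alpha \notin \Q/\Z$. Since $\cG$ is torsion-free and $g, h$ are $\Q$-linearly independent in $\cG \otimes_\Z \Q \cong \Q^2$, the subgroup $\cF := \langle g, h \rangle \subset \cG$ is free abelian of rank $2$; and by Lemma~\ref{lemma:unique_hyperbolic_plane} applied to $\cF$, the restriction $\sigma|_\cF$ coincides with $\alpha \sigma_2$ in the basis $\{g,h\}$, so $(\cF, \sigma|_\cF) \cong (\Z^2, \alpha \sigma_2)$ with $\alpha$ irrational.

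The inclusion $\cF \hookrightarrow \cG$ induces an inclusion of $*$-algebras $\cW_\cF \hookrightarrow \cW_\cG$ (via the functoriality of Remark~\ref{rmk:functor}), and $\omega$ restricts to a state $\omega_\cF$ on $\cW_\cF$. If one can establish that $\omega_\cF$ is $\Sp(\cF) = \SL(2,\Z)$-invariant, then the main result of~\cite{BMP} applied to $(\Z^2, \alpha \sigma_2)$ immediately forces $\omega_\cF$ to be the tracial state on $\cW_\cF$, whence $\omega(g) = \omega_\cF(g) = 0$, concluding the proof.

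The main obstacle is therefore proving that $\omega_\cF$ is $\Sp(\cF)$-invariant. Via the basis $\{g,h\}$, $\Sp(\cG)$ embeds into $\Sp(\Q^2, \alpha \sigma_2) = \SL(2, \Q)$ as the subgroup stabilizing $\cG \subset \Q^2$, and $\Sp(\cF)$ embeds as $\SL(2, \Z)$. The task is to show that every $M \in \SL(2, \Z)$, viewed in $\SL(2, \Q)$, preserves $\cG$ as a subset of $\Q^2$ — equivalently, admits a lift to $\tilde M \in \Sp(\cG)$ with $\tilde M|_\cF = M$. This is not automatic for a general rank-$2$ subgroup $\cG \subset \Q^2$, so the hard part is either (i) to construct such lifts explicitly by analyzing the structure of $\cG$ as a rank-$2$ subgroup of $\Q^2$ containing $\cF$, or (ii) to replicate the positivity argument of~\cite{BMP} directly inside $\cW_\cG$: one sets $c := \omega(g) \in [-1,1]$ (real by $\Inv$-invariance, Proposition~\ref{prop:real_values}), chooses a large family $g_1, \ldots, g_N$ of elements in the $\Sp(\cG)$-orbit of $g$, and exploits the positivity inequality $\omega\bigl((\sum_i g_i)^*(\sum_i g_i)\bigr) \ge 0$ together with the irrationality of the pairwise values $\sigma(g_i, g_j)$ to force $c = 0$ in the limit $N \to \infty$. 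Strategy (ii) is the more robust route, as it avoids the delicate extension question entirely and mirrors the scheme of~\cite{BMP}.
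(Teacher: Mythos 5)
Your plan follows the same route as the paper's own (one-line) proof: reduce to a hyperbolic plane $(\Z^2,\alpha\sigma_2)$ with $\alpha$ irrational and quote Theorem 2.2 of \cite{BMP} (the paper does this via Lemma~\ref{lemma:unique_hyperbolic_plane}). But your write-up stops exactly where the work is. You correctly identify that everything hinges on showing that the restriction $\omega_\cF$ is $\Sp(\cF)=\Sl(2,\Z)$-invariant, i.e.\ that $\Sl(2,\Z)$ acting in the basis $\{g,h\}$ lifts into $\Sp(\cG)$, and then you offer two strategies without carrying either out. That is a genuine gap, and in fact neither strategy can be executed at the stated level of generality. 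Take $\cG=\Z[\tfrac{1}{2}]\oplus\Z\subset\Q^2$ with $\sigma=\theta\sigma_2$, $\theta$ irrational: this is a rank-$2$ irrational $\T$-symplectic abelian group, but every automorphism of $\cG$ must preserve the $2$-divisible subgroup $\Z[\tfrac{1}{2}]\oplus 0$, so $\Sp(\cG)$ consists only of the maps $\pm T_b$ with $T_b(x,y)=(x+by,y)$, $b\in\Z[\tfrac{1}{2}]$. Hence no element of $\Sl(2,\Z)$ beyond $\pm$ shears lifts (killing your strategy (i)), and the $\Sp(\cG)$-orbit of $(x,0)$ is the two-point set $\{\pm(x,0)\}$, so no large family $g_1,\dots,g_N$ in one orbit exists (killing your strategy (ii)). Worse, the statement itself fails for this $\cG$: the functional defined by $\omega(x,0)=\cos(2\pi\lambda x)$ and $\omega(x,y)=0$ for $y\neq 0$ is a non-tracial $\Sp(\cG)$-invariant state for generic $\lambda$; positivity holds because grouping $\omega(\aa^*\aa)$ by the $y$-fibres gives, for each fixed $y$, the manifestly non-negative quantity
\begin{equation*}
\tfrac{1}{2}\Big|\sum_x a_{(x,y)}e^{\imath(\pi\theta y+2\pi\lambda)x}\Big|^2+\tfrac{1}{2}\Big|\sum_x a_{(x,y)}e^{\imath(\pi\theta y-2\pi\lambda)x}\Big|^2 ,
\end{equation*}
while invariance is clear since $T_b$ fixes the line $y=0$ pointwise and preserves each fibre $y\neq 0$.

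So the obstacle you flag is real and, for arbitrary rank-$2$ groups, insurmountable: the lemma is only safe when $\Sp(\cG)$ is rich enough, e.g.\ when $\cG$ is a $2$-dimensional $\Q$-vector space, which is precisely how the lemma is used downstream (Lemma~\ref{lemma:NCtori_no_torsion_injective} applies it to $\cH_\Q$). In that divisible case your lifting problem disappears — any $\Q$-linear change of basis preserving the form lies in $\Sp(\cG)$, so the copy of $\Sl(2,\Z)$ attached to $\{g,h\}$ sits inside $\Sp(\cG)$ and your argument closes by \cite{BMP}. To be fair, the paper's own proof (``a direct application of Theorem 2.2 of \cite{BMP}'') silently elides the very same point, so you have put your finger on the delicate step; but identifying the obstacle is not the same as overcoming it, and as submitted your proposal does not prove the statement.
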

\begin{proof}
By Lemma \ref{lemma:unique_hyperbolic_plane} the claim follows by a direct application of Theorem 2.2 of \cite{BMP}.
\end{proof}

The next step is to generalize the previous lemma to all injective abelian groups.

\begin{lemma} \label{lemma:NCtori_no_torsion_injective}
Let $(\cG, \sigma)$ be an injective $\T$-symplectic abelian group such that 
$\sigma$ is irrational and that satisfies Assumption \ref{ass:diagonalization}. 
Then, the only $\Sp(\cG)$-invariant state on $\cW_\cG$ is the tracial state.
\end{lemma}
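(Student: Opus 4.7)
My plan is to exhibit, for any $g \in \cG\setminus\{1_\cG\}$, a rank-$2$ $\T$-symplectic subspace $\cH \subset \cG$ containing $g$ such that the restriction $\omega|_{\cW_\cH}$ is $\Sp(\cH)$-invariant, then invoke Lemma \ref{lemma:NCtori_no_Q2} to conclude $\omega(g) = 0$. The crucial preliminary step is to promote $\sigma$ from a $\Z$-bilinear form to a genuinely $\Q$-bilinear one, so that classical symplectic linear algebra becomes available. For clarity I switch to additive notation on $\cG$ in what follows.

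Since $\cG$ is torsion-free and injective, Baer's criterion yields that $\cG$ is divisible, hence a $\Q$-vector space. By Definition \ref{defn:irrational} the image of $\sigma$ meets the torsion subgroup $\Q/\Z \subset \T$ only trivially, so the projection $\T \to \T/(\Q/\Z) \cong \R/\Q$ is injective on that image and $\sigma$ can be regarded as taking values in the $\Q$-vector space $\R/\Q$. For any $a, b \in \cG$ and $n \in \N$, the identity $n\cdot\sigma(a/n, b) = \sigma(a,b)$ combined with unique divisibility in $\R/\Q$ forces $\sigma(a/n, b) = \sigma(a,b)/n$, and similarly in the second variable. Therefore $\sigma$ is $\Q$-bilinear, and $(\cG, \sigma)$ becomes a symplectic $\Q$-vector space in the classical sense.

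Given $g \neq 1_\cG$, non-degeneracy produces $h \in \cG$ with $\sigma(g,h) \neq 0$. Setting $\cH := \Q g + \Q h$ yields a $2$-dimensional $\Q$-subspace on which $\sigma$ restricts to a non-degenerate and still irrational form, so by Lemma \ref{lemma:unique_hyperbolic_plane} the pair $(\cH, \sigma|_\cH)$ fits into the hypotheses of Lemma \ref{lemma:NCtori_no_Q2}. With $\Q$-bilinearity available, the standard symplectic Gram--Schmidt argument supplies an orthogonal direct sum decomposition $\cG = \cH \oplus \cH^\perp$, where $\cH^\perp = \{ y \in \cG \mid \sigma(y, h') = 0 \text{ for all } h' \in \cH \}$. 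Every $\Theta \in \Sp(\cH)$ then extends to $\tilde\Theta := \Theta \oplus \Id_{\cH^\perp} \in \Sp(\cG)$, so by $\Sp(\cG)$-invariance of $\omega$ its restriction $\omega|_{\cW_\cH}$ is $\Sp(\cH)$-invariant. Lemma \ref{lemma:NCtori_no_Q2} then forces $\omega|_{\cW_\cH}$ to be the tracial state on $\cW_\cH$, so $\omega(g) = 0$; since $g$ was arbitrary, $\omega$ coincides with the tracial state on $\cW_\cG$.

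The main obstacle is precisely this $\Q$-bilinearity upgrade: without it the orthogonal decomposition $\cG = \cH \oplus \cH^\perp$ fails in general, because the pairing $\sigma(y, \cdot)|_\cH$ of some $y \in \cG$ need not be realized by any element of $\cH$ (the values $\sigma(y, g), \sigma(y, h) \in \T$ need not lie in $\Q \cdot \sigma(g,h)$). The irrationality hypothesis rules this pathology out: it forces the image of $\sigma$ into a $\Q$-vector subspace of $\T$, turning $\Z$-bilinearity plus divisibility of $\cG$ into honest $\Q$-linearity of the form, after which the reduction to the rank-$2$ case is routine.
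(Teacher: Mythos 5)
Your overall strategy is the paper's: restrict an invariant state to a rank-$2$ symplectic subspace $\cH$ through $g$, extend elements of $\Sp(\cH)$ by the identity on a complement, and conclude via Lemma \ref{lemma:NCtori_no_Q2}. Your $\Q$-bilinearity upgrade is correct and even a nice touch: irrationality makes the image $S = \sigma(\cG \otimes \cG)$ a subgroup of $\T$ meeting $\Q/\Z$ trivially, hence torsion-free, and $S$ is divisible (it is a quotient of the $\Q$-vector space $\cG \otimes_\Z \cG$), so $S$ is a $\Q$-vector space and unique divisibility gives honest $\Q$-bilinearity of $\sigma$. But there is a genuine gap at the splitting step. ``Standard symplectic Gram--Schmidt'' requires the form to take values in a one-dimensional space of scalars, whereas here $S$ may have $\Q$-dimension greater than $1$. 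To project $y$ orthogonally onto $\cH = \Q g + \Q h$ you must have $\sigma(y,g),\,\sigma(y,h) \in \Q\,\sigma(g,h)$; $\Q$-bilinearity only places these values in the common $\Q$-vector space $S$, not on the line $\Q\,\sigma(g,h)$. Your closing paragraph asserts that irrationality rules out exactly this pathology, but it does not. Concretely, take $\cG = \Q^3$ with $\sigma(x,y) = \theta_1 (x \times y)_1 + \theta_2 (x \times y)_2 + \theta_3 (x \times y)_3 \bmod \Z$, where $x \times y$ is the cross product and $1, \theta_1, \theta_2, \theta_3$ are $\Q$-linearly independent reals. This is an injective, irrational, non-degenerate $\T$-symplectic abelian group, yet for $\cH = \Q e_1 + \Q e_2$ one computes $\cH^\perp = 0$ (e.g.\ $\sigma(y, e_1) = \theta_2 y_3 - \theta_3 y_2 = 0$ forces $y_2 = y_3 = 0$), so $\cG \ne \cH \oplus \cH^\perp$ and no extension $\Theta \oplus \Id_{\cH^\perp}$ exists.

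This is precisely where Assumption \ref{ass:diagonalization} --- which appears in your hypotheses but is never used in your argument --- must enter: the paper derives the decomposition $(\cG, \sigma) \cong (\cH_\Q, \sigma|_{\cH_\Q}) \oplus (\cH_\Q^\perp, \sigma|_{\cH_\Q^\perp})$ from the diagonalizability hypothesis (Theorem \ref{thm:symplectic_diagonalization_2} applied on finitely generated subgroups), not from linear algebra over $\Q$. Consistently with this, the example above violates Assumption \ref{ass:diagonalization} (the component matrices of the cross-product form generate a non-commutative algebra), so it contradicts your proof but not the lemma; as written, your argument would prove a false strengthening in which the assumption is dropped. If the image of $\sigma$ happens to have $\Q$-rank one, your Gram--Schmidt step is valid verbatim, but that is a special case. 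The rest of your proof --- torsion-freeness (which you assert; it does follow since irrationality plus non-degeneracy kills torsion elements), non-degeneracy and irrationality of $\sigma|_\cH$, the appeal to Lemmas \ref{lemma:unique_hyperbolic_plane} and \ref{lemma:NCtori_no_Q2}, and the restriction of invariant states --- is fine once the splitting is secured by invoking the assumption.
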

\begin{proof}
The condition of $\cG$ being injective is equivalent to $\cG$ being a $\Q$-vector space. By Proposition \ref{prop:hyperbolic plane} we know that there is a hyperbolic plane $\cH \subset \cG$ that contains $g$. Since $\cG$ is a $\Q$-vector space then we can consider the $\Q$-vector space generated by $\cH$ in $\cG$, that we denote $\cH_\Q$. The restriction of $\sigma$ to $\cH_\Q$ is a $\T$-symplectic form, hence $\cH_\Q$ is isomorphic to $(\cH_\Q, r \sigma_2)$, for some irrational $r$, as a consequence of Lemma \ref{lemma:unique_hyperbolic_plane}. Since $\sigma$ is diagonalizable, it is easy to see that 
\[ (\cG, \sigma) \cong (\cH_\Q, \sigma|_{\cH_\Q}) \oplus (\cH_\Q^\perp, \sigma|_{{\cH_\Q}^\perp}) \]
as a $\T$-symplectic abelian group, because it is easy to check that $\sigma(h, h') = 0$ for $h \in \cH_\Q$, $h' \in \cH_\Q^\perp$. Therefore, we get an inclusion $\Sp(\cH_\Q) \subset \Sp(\cG)$. This implies that every $\Sp(\cG)$-invariant state $\omega$ on $\cW_\cG$ must restrict to a $\Sp(\cH_\Q)$-invariant state on $\cW_{\cH_\Q}$. This implies that $\omega$ is trivial on $\cH_\Q$ by Lemma \ref{lemma:NCtori_no_Q2}.
\end{proof}

The next one is our first general result.

\begin{thm} \label{thm:NCtori_no_torsion_CD}
Let $(\cG, \sigma)$ be completely decomposable $\T$-symplectic abelian group such that 
$\sigma$ is irrational and that satisfies Assumption \ref{ass:diagonalization}. 
Then, the only $\Sp(\cG)$-invariant state on $\cW_\cG$ is the tracial state.
\end{thm}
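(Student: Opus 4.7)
The plan is to follow the strategy of Lemma~\ref{lemma:NCtori_no_torsion_injective}, replacing the role of $\Q$-vector space structure with the (weaker) property of complete decomposability, and upgrading the splitting step via Assumption~\ref{ass:diagonalization}. Fix an $\Sp(\cG)$-invariant state $\omega$; our goal is to show $\omega(g)=0$ for every $g\in \cG\setminus\{1_\cG\}$.

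\emph{Step 1 (Hyperbolic plane through $g$).} By non-degeneracy of $\sigma$, choose $h \in \cG$ with $\sigma(g,h) \neq 0$. Since $\sigma$ is irrational, $r := \sigma(g,h) \in \T$ lies outside $\Q/\Z$. Proposition~\ref{prop:hyperbolic plane} together with Remark~\ref{rmk:rank_hyperbolic_plane} gives a hyperbolic plane $\cH = \langle g, h\rangle \cong (\Z^2, r\sigma_2) \subset \cG$, with irrational symplectic form.

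\emph{Step 2 (Pure closure of $\cH$).} Embed $\cG \hookrightarrow \cG \otimes_\Z \Q$ and let $\cH_\Q \subset \cG \otimes_\Z \Q$ be the 2-dimensional $\Q$-subspace generated by $\cH$. Set $\bar\cH := \cG \cap \cH_\Q$, the pure closure of $\cH$ in $\cG$; this is a rank-2 torsion-free sub-group containing $g$. By Lemma~\ref{lemma:unique_hyperbolic_plane}, $\sigma|_{\bar\cH} = (r\sigma_2)|_{\bar\cH}$, which is still non-degenerate and irrational. Moreover, since $\cG \cong \bigoplus_{i\in I}\cG_i$ is completely decomposable of rank-1 pieces, the pure rank-2 sub-group $\bar\cH$ is itself completely decomposable.

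\emph{Step 3 (Symplectic splitting of $\cG$, the key step).} We want to exhibit an isomorphism
\[
(\cG, \sigma) \cong (\bar\cH, \sigma|_{\bar\cH}) \oplus (\bar\cH^\perp, \sigma|_{\bar\cH^\perp})
\]
of $\T$-symplectic abelian groups. Fix a $\Z$-basis $\{x_1, x_2\}$ of some finitely generated pure sub-group of $\bar\cH$ isomorphic to the hyperbolic plane $\cH$. For any finitely generated sub-group $\cF \subset \cG$ containing $\{x_1, x_2\}$, Assumption~\ref{ass:diagonalization} and Theorem~\ref{thm:symplectic_diagonalization_2} allow one to simultaneously diagonalize $\sigma|_\cF$ into hyperbolic planes; one can arrange that $\langle x_1, x_2\rangle$ is one of the blocks of the diagonalization, producing a symplectic complement $\cF^\flat \subset \cF$ with $\sigma(\langle x_1, x_2\rangle, \cF^\flat) = 0$. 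Passing to the filtered colimit over such $\cF$ (using complete decomposability to guarantee that the complements $\cF^\flat$ assemble coherently), one obtains a global sub-group $\cG^\flat \subset \cG$ that is a $\T$-symplectic complement of $\bar\cH$, whence the splitting above.

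\emph{Step 4 (Conclusion via rank-2 case).} From the splitting, each $\Phi \in \Sp(\bar\cH, \sigma|_{\bar\cH})$ extends by the identity on $\bar\cH^\perp$ to an element of $\Sp(\cG, \sigma)$, giving an inclusion $\Sp(\bar\cH) \hookrightarrow \Sp(\cG)$. Hence the restriction $\omega|_{\cW_{\bar\cH}}$ is $\Sp(\bar\cH)$-invariant on the \stg-algebra of the rank-2 $\T$-symplectic abelian group $(\bar\cH, \sigma|_{\bar\cH})$ with irrational form. Lemma~\ref{lemma:NCtori_no_Q2} then forces $\omega|_{\cW_{\bar\cH}}$ to be the tracial state, so in particular $\omega(g)=0$.

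The main obstacle is Step~3: in the injective case of Lemma~\ref{lemma:NCtori_no_torsion_injective} the splitting is immediate from $\Q$-linearity, but in the completely decomposable setting it must be assembled from local diagonalizations on finitely generated sub-groups via a colimit argument, which is precisely where Assumption~\ref{ass:diagonalization} is essential. The remaining steps are either straightforward or invoke already-established results.
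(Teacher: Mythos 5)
Your proposal follows the paper's own route step for step (hyperbolic plane through $g$, pure closure $\ol{\cH}=\cG\cap\cH_\Q$, symplectic splitting $(\cG,\sigma)\cong(\ol\cH,\sigma)\oplus(\ol\cH^\perp,\sigma)$, the inclusion $\Sp(\ol\cH)\hookrightarrow\Sp(\cG)$, and Lemma~\ref{lemma:NCtori_no_Q2} to conclude), but your Step~3 contains a genuine gap at exactly the point you identify as the main obstacle. With an \emph{arbitrary} $h$ satisfying $\sigma(g,h)\neq 0$ (your Step~1), the pure plane $\ol\cH$ need not be split, and one cannot in general ``arrange that $\lt x_1,x_2\gt$ is one of the blocks'' of the diagonalization. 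Concretely, take $\cG=(\Z^2,\theta\sigma_2)\oplus(\Z^2,\theta\sigma_2)$ with $\theta$ irrational and standard basis $e_1,f_1,e_2,f_2$; all hypotheses of the theorem hold (the form is already diagonal). For $g=e_1+e_2$ and $h=f_1+f_2$ one has $\sigma(g,h)=2\theta\neq 0$ and $P=\lt g,h\gt=\cG\cap\cH_\Q$ is pure with $\sigma|_P=2\theta\,\sigma_2$ non-degenerate and irrational, yet $P^\perp=\lt e_1-e_2,\,f_1-f_2\gt$ and $P+P^\perp=\{z\in\Z^4 \,:\, z_1+z_3\equiv z_2+z_4\equiv 0 \bmod 2\}$ has index $4$ in $\cG$; moreover any orthogonal complement $C$ would satisfy $C\subseteq P^\perp$, so no splitting exists at all. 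Hence $\Sp(P)$ does not extend by the identity and your Step~4 fails for this choice of $h$. The obstruction is a divisibility condition ($\sigma(x_i,z)$ must be an integer multiple of $\sigma(x_1,x_2)$ for all $z$), so $h$ must be chosen \emph{adapted to the diagonalization} of a finitely generated subgroup containing $g$ (here $h=f_1$ works, and $\lt g,f_1\gt$ does split). What is really needed, and missing from your write-up, is a proof that every $g\neq 1_\cG$ lies in some \emph{split} hyperbolic plane in the sense of Definition~\ref{defn:split_hyperbolic_plane} --- this is precisely what the paper later axiomatizes as the hypothesis $\Sp(\cG)\otimes_\Z\Q\supset\Sp^H(\cG\otimes_\Z\Q)$ in Theorem~\ref{thm:NCtori_no_torsion}, and what its (admittedly terse) appeal to ``the same reasoning as Lemma~\ref{lemma:NCtori_no_torsion_injective}'' is meant to cover.

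Two further points. First, your filtered-colimit assembly is not well-defined as stated: the complements $\cF^\flat$ produced via Theorem~\ref{thm:symplectic_diagonalization_2} depend on basis choices and carry no canonical transition maps $\cF^\flat\to\cF'^\flat$ for $\cF\subset\cF'$. The correct formulation is to define the complement globally as $\ol\cH^\perp$ and verify $\cG=\ol\cH+\ol\cH^\perp$ elementwise, using for each $z$ a finitely generated $\cF\ni x_1,x_2,z$ whose diagonalization has $\cF\cap\cH_\Q$ as a block --- which again presupposes the adapted choice of plane; complete decomposability does no visible work in your version of this argument. Second, your claim in Step~2 that the pure rank-$2$ subgroup $\ol\cH$ of a completely decomposable group is itself completely decomposable is false: such subgroups are exactly finite-rank Butler groups, and indecomposable Butler groups of rank $2$ exist (e.g.\ the kernel of the sum map $A_1\oplus A_2\oplus A_3\to\Q$ for rank-$1$ groups of pairwise incomparable types). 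Fortunately this claim is never used, since Lemma~\ref{lemma:NCtori_no_Q2} requires only rank $2$ and irrationality.
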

\begin{proof}%[Proof of Theorem \ref{thm:NCtori_no_torsion}]
We can embed $\cG$ in its injective envelop $\cG \otimes_\Z \Q$, that comes equipped with a structure of $\T$-sympletic abelian group $\sigma \otimes_\Z \Q$ that canonically extends the one of $\cG$, and for any $g \in \cG$ we can consider a $\Q$-hyperbolic plane $\cH_\Q \subset \cG \otimes_\Z \Q$ containing $g$. Now, suppose that there exists a non-trivial $\Sp(\cG)$-invariant state on $\cW_\cG$, then this state would be such that $\omega(g) \ne 0$ for some $g \ne 1_\cG$. For such a $g$ we have that $(\cG, \sigma) \cong (\cH_\Q \cap \cG, \sigma) \oplus ((\cH_\Q \cap \cG)^\perp, \sigma)$, by the same reasoning used in Lemma \ref{lemma:NCtori_no_torsion_injective}. Therefore, $\Sp((\cH_\Q \cap \cG, \sigma))$ embeds in $\Sp(\cG)$, proving that any $\Sp(\cG)$-invariant state on $\cW_\cG$ must restrict to the trivial state on $(\cH_\Q \cap \cG, \sigma)$, contradicting to the hypothesis that $\omega(g) \ne 0$.
\end{proof}

In the next theorem we axiomatize the proof of Theorem \ref{thm:NCtori_no_torsion_CD}. In order to do that we introduce the following concepts. 

\begin{defn} \label{defn:split_hyperbolic_plane}
Let $(\cG, \sigma)$ be a $\T$-symplectic abelian group. Let $\cH \subset (\cG, \sigma)$ be a rank $2$ hyperbolic plane (by this we mean a sub-group of rank $2$ of $\cG$ where $\sigma$ restricts to a non-degenerate form). We say that $\cH$ is \emph{split} if
\[ \cG \cong (\cH, \sigma_\cH) \oplus (\cH^\perp, \sigma_{\cH^\perp}) \]
as a $\T$-symplectic abelian group.
\end{defn} 

Notice that for any split hyperbolic plane $\cH \subset (\cG, \sigma)$ we have an injective morphism
\[ \Sp(\cH) \to \Sp(\cG). \]

\begin{defn} \label{defn:plane_automorphisms}
Let $(\cG, \sigma)$ be a $\T$-symplectic abelian group. Let $\{ \cH_i \}_{i \in I}$ be the family of split hyperbolic planes of $\cG$. We define the group of \emph{plane automorphisms} of $\cG$ as 
\[ \lt \{ \Sp(\cH_i) \}_{i \in I} \gt \subset \Sp(\cG), \]
and we denote it by $\Sp^H(\cG)$.
\end{defn}

\begin{thm} \label{thm:NCtori_no_torsion}
Let $(\cG, \sigma)$ be a $\T$-symplectic abelian group such that 
$\sigma$ is irrational and satisfies Assumption \ref{ass:diagonalization}. Assume also that $\Sp(\cG)\otimes_\Z \Q \supset \Sp^H(\cG \otimes_\Z \Q)$, where $\cG \otimes_\Z \Q$ is equipped with the  $\T$-symplectic form $\sigma \otimes_\Z \Q$.
Then, the only $\Sp(\cG)$-invariant state on $\cW_\cG$ is the tracial state.
\end{thm}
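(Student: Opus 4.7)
My plan is to reduce the theorem to the rank-$2$ case handled by Lemma \ref{lemma:NCtori_no_Q2}, using the hypothesis $\Sp^H(\cG \otimes_\Z \Q) \subset \Sp(\cG)\otimes_\Z \Q$ as a substitute for the genuine splitting of $\cG$ that Theorem \ref{thm:NCtori_no_torsion_CD} exploited. Let $\omega$ be a $\Sp(\cG)$-invariant state on $\cW_\cG$ and pick $g \in \cG$ with $g \ne 1_\cG$; the aim is to prove $\omega(g) = 0$. I would first pass to the rational envelope $\cG \otimes_\Z \Q$, equipped with the extended form $\sigma \otimes_\Z \Q$, which is again irrational since its image lies in that of $\sigma$. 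Using Assumption \ref{ass:diagonalization} applied to a finitely generated sub-group $\cF \subset \cG$ containing $g$, together with Theorem \ref{thm:symplectic_diagonalization_2} and Proposition \ref{prop:hyperbolic plane}, I plan to extract a split hyperbolic plane $\cH_\Q$ of $\cG \otimes_\Z \Q$ containing $g$. By Lemma \ref{lemma:unique_hyperbolic_plane}, such a $\cH_\Q$ is automatically isomorphic to $(\Q^2, r\sigma_2)$ for some irrational $r\in\T$, and $\cH_\Q \cap \cG$ is then a rank-$2$ $\T$-symplectic abelian subgroup of $\cG$ that still contains $g$ and carries an irrational symplectic form.

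The next step is to transfer $\Sp(\cG)$-invariance of $\omega$ to $\Sp(\cH_\Q \cap \cG)$-invariance of the restriction $\omega|_{\cW_{\cH_\Q \cap \cG}}$. Any $\phi \in \Sp(\cH_\Q \cap \cG)$ extends uniquely by $\Q$-linearity to $\bar\phi \in \Sp(\cH_\Q)$ (the fact that $\bar\phi$ preserves $\sigma\otimes_\Z\Q$ on $\cH_\Q$ follows by bilinearity from its preservation of $\sigma$ on $\cH_\Q\cap\cG$), and extending further by the identity on $\cH_\Q^\perp$ yields an element of $\Sp^H(\cG \otimes_\Z \Q)$. By the standing hypothesis, this element is the image of some $\widetilde\phi \in \Sp(\cG)$ under the natural map $\Sp(\cG) \to \Aut(\cG\otimes_\Z\Q)$; since it agrees with $\bar\phi$ on $\cH_\Q$, the element $\widetilde\phi$ preserves $\cH_\Q \cap \cG$ and acts there as $\phi$. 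Via the functoriality of Remark \ref{rmk:functor}, $\cW_{\cH_\Q\cap\cG}$ embeds as a sub-$*$-algebra of $\cW_\cG$, and $\omega|_{\cW_{\cH_\Q \cap \cG}}$ becomes $\Sp(\cH_\Q \cap \cG)$-invariant. Lemma \ref{lemma:NCtori_no_Q2} forces this restriction to be the tracial state, whence $\omega(g) = 0$.

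The principal obstacle I expect is the construction of the split hyperbolic plane $\cH_\Q$ in the rational envelope: while in the completely decomposable case one has a global direct sum decomposition of $\cG$ into rank-$1$ factors that makes such a splitting transparent, in the present generality only finite-dimensional diagonalizations are provided by Assumption \ref{ass:diagonalization}, and these have to be organized coherently so that a single plane containing $g$ literally splits off in $\cG \otimes_\Z \Q$. Concretely, one diagonalizes $\sigma|_{\cF}$ via Theorem \ref{thm:symplectic_diagonalization_2}, isolates a rank-$2$ component containing $g$, and then must verify that the $\Q$-linear complement of this component in $\cG \otimes_\Z \Q$ is indeed $\sigma\otimes_\Z\Q$-orthogonal to the chosen plane. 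Once this geometric step is in place, the remainder is a straightforward combination of the lifting hypothesis on $\Sp^H$ with the already-established rank-$2$ case.
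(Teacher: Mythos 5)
Your proposal is correct and follows essentially the same route as the paper's proof: pass to the injective envelope $\cG \otimes_\Z \Q$, use the hypothesis $\Sp^H(\cG \otimes_\Z \Q) \subset \Sp(\cG)\otimes_\Z \Q$ to lift plane automorphisms to $\Sp(\cG)$ so that any invariant state restricts to an $\Sp(\cH_\Q \cap \cG)$-invariant state on $\cW_{\cH_\Q \cap \cG}$, and then invoke Lemma \ref{lemma:NCtori_no_Q2} on the rank-$2$ piece containing $g$. The two points you flag as delicate --- the extension of $\phi$ by $\Q$-linearity and identity on $\cH_\Q^\perp$, and the splitting of the hyperbolic plane via Assumption \ref{ass:diagonalization} --- are exactly the steps the paper handles (more tersely) through the same mechanism as in Lemma \ref{lemma:NCtori_no_torsion_injective} and Theorem \ref{thm:NCtori_no_torsion_CD}.
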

\begin{proof}
Let $\cG \otimes_\Z \Q$ be the injective envelope of $\cG$. Notice that every automorphism of $\cG$ lifts to an automorphism of $\cG \otimes_\Z \Q$ because of the universal property of $\cG \otimes_\Z \Q$. 
This gives an injection of $\Sp(\cG)$ in $\Sp(\cG \otimes_\Z \Q)$ because $\cG \otimes_\Z \Q$ is equipped with the $\T$-symplectic form $\sigma \otimes_\Z \Q$. Since we assume that ${\Sp(\cG)\otimes_\Z \Q} \supset \Sp^H(\cG \otimes_\Z \Q)$, this implies that every $\Sp(\cG)$-invariant state induces an invariant state on $\cW_{\cG \cap \cH_i}$, for any split hyperbolic plane $\cH_i$ of $\cG \otimes_\Z \Q$. And this restriction must be trivial as a consequence of Lemma \ref{lemma:NCtori_no_Q2}. Since each non-null element of $\cG \otimes_\Z \Q$ generates a split hyperbolic plane, we get that only the only $\Sp(\cG)$-invariant state on $\cW_\cG$ is the trivial state.
\end{proof}

\begin{rmk}
Without the hypothesis of diagonalization of Theorem~\ref{thm:symplectic_diagonalization_2}, the symplectic group could be very small, as discussed in Example~\ref{ex:conjugate sympl group}. Therefore, its action could not be ergodic, allowing many invariant states in some specific cases.
\end{rmk}

\subsection{$\cG$ is torsion and non-finitely generated}\label{inftors}

In this section  we investigate the uniqueness of $\Sp(\cG)$-invariant states for torsion $\T$-sympletic abelian groups $\cG$ of infinite rank. Since a complete classification of this class of groups is not known, we shall focus our attention on some important examples. 
First, consider an infinite direct sum
\[ \cG = \bigoplus_{i \in I} (\F_p^2, \sigma_2) \]
where $I$ is any set of infinite cardinality (as if $I$ is finite than the action of $\Sp(\cG)$ on $\cW_\cG$ is not ergodic, on account of Proposition~\ref{prop:ergodic}, and it is not difficult to see that in this case $\cW_\cG$ has many invariant states).  
Without loss of generality, we assume $I = \N$, since the cases when $I$ has bigger cardinality can be dealt in the same way or reduced to this case.

\begin{lemma} \label{lemma:infinite_torsion_orbits}
The action of $\Sp(\cG)$ on $\cG = \bigoplus_{i \in I} (\F_p^2, \sigma_2)$ has only one orbit besides the one of the identity of $\cG$.
\end{lemma}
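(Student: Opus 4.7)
The plan is to show that any two non-zero elements of $\cG$ are equivalent under $\Sp(\cG)$; together with the obvious orbit $\{0_\cG\}$, this gives the claim. The key observation is that since $\cG$ is a direct sum, every element has finite support, so we can reduce the problem to a transitivity statement on a \emph{finite} symplectic $\F_p$-space and then extend by the identity.

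First, I would fix non-zero $x, y \in \cG$ and pick an integer $N$ large enough so that both $x$ and $y$ are supported in the finite sub-sum
\[ \cG_N \doteq \bigoplus_{i = 1}^{N} (\F_p^2, \sigma_2). \]
By Proposition \ref{prop:direct_sum} we have $\cG \cong \cG_N \oplus \cG_N'$ as $\T$-symplectic abelian groups, where $\cG_N' \doteq \bigoplus_{i > N}(\F_p^2, \sigma_2)$, and $\cG_N$ is canonically isomorphic to $(\F_p^{2N}, \sigma_{2N})$.

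Second, I would establish the transitivity of $\Sp(2N, \F_p)$ on non-zero vectors of $\F_p^{2N}$, extending Lemma \ref{lem:Orbita_su_F_p} from $N = 1$ to arbitrary $N$. The cleanest route is induction on $N$: given a non-zero $v \in \F_p^{2N}$, by the non-degeneracy of $\sigma_{2N}$ there exists $w \in \F_p^{2N}$ with $\sigma_{2N}(v, w) = 1$, so $v$ and $w$ span a hyperbolic plane $P \subset \F_p^{2N}$ with $\F_p^{2N} = P \oplus P^\perp$ as symplectic $\F_p$-spaces (this is just Witt decomposition over $\F_p$, valid because $P$ is non-degenerate). Pulling back the canonical symplectic basis of $P$ and using the inductive hypothesis on $P^\perp$, one sees that any non-zero vector can be completed to a symplectic basis, and any two symplectic bases are related by an element of $\Sp(2N, \F_p)$. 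Hence there exists $\Theta_0 \in \Sp(\cG_N)$ with $\Theta_0(x) = y$.

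Third, I would define $\Theta \in \Aut(\cG)$ by setting $\Theta|_{\cG_N} = \Theta_0$ and $\Theta|_{\cG_N'} = \Id$. Since the direct sum decomposition is orthogonal with respect to $\sigma$ (by construction of $\displaystyle \sum_i \sigma_{\cG_i}$ in Proposition \ref{prop:direct_sum}), for $(a, a'), (b, b') \in \cG_N \oplus \cG_N'$ we have
\[ \sigma(\Theta(a, a'), \Theta(b, b')) = \sigma_N(\Theta_0 a, \Theta_0 b) + \sigma_{N'}(a', b') = \sigma_N(a, b) + \sigma_{N'}(a', b') = \sigma((a, a'), (b, b')), \]
so $\Theta \in \Sp(\cG)$, and by construction $\Theta(x) = y$.

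The only non-routine step is the transitivity of $\Sp(2N, \F_p)$ on $\F_p^{2N} \setminus \{0\}$, but this is a classical fact that follows from the existence of symplectic complements over a field (Witt's extension theorem, specialized to the symplectic case), which in turn reduces to Lemma \ref{lem:Orbita_su_F_p} once one has isolated a hyperbolic plane. All the other steps are purely formal consequences of the structure of a direct sum in $\bSymp_\T$.
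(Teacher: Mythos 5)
Your proof is correct, but it takes a genuinely different route from the paper's. Both arguments exploit finite support to reduce to a finite-rank block and both extend a finite-rank symplectic automorphism by the identity on the complement (you via $\Theta_0 \oplus \Id$ on $\cG_N \oplus \cG_N'$, the paper via the embeddings $\Sp((\F_p^2,\sigma_2)) \hookrightarrow \Sp(\cG)$ coming from split hyperbolic planes); the difference is in the core transitivity step. The paper stays elementary and explicit: it applies Lemma \ref{lem:Orbita_su_F_p} factor by factor, together with the symplectic pair-swap $(x_1,x_2,x_3,x_4)\mapsto(x_3,x_4,x_1,x_2)$, to bring both elements to the normal form $((x,x),\ldots,(x,x),(0,0),\ldots)$ with $n$, resp.\ $m$, non-zero blocks, and then changes the number of blocks by induction using a single explicit matrix $\bM\in\Sp(\F_p^4,\sigma_4)$ sending $((x,x),(0,0))$ to $((x,x),(x,x))$; no symplectic linear algebra beyond the rank-$2$ case is needed. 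You instead invoke the classical transitivity of $\Sp(2N,\F_p)$ on $\F_p^{2N}\setminus\{0\}$ via Witt decomposition and symplectic basis extension, which is more conceptual, avoids the normal-form bookkeeping, and in fact proves the stronger statement that the paper only announces after Lemma \ref{lem:Orbita_su_F_p} (transitivity for $(\F_q^{2n}, r\sigma_{2n})$). What the paper's route buys is self-containedness and the fact that the induction on blocks does not use division, whereas your route genuinely needs $\F_p$ to be a field (to rescale $w$ so that $\sigma_{2N}(v,w)=1$), so it would have to be modified for $\Z/n\Z$ coefficients, where transitivity on non-zero vectors actually fails (cf.\ the several orbits noted before Corollary \ref{cor:infinite_torsion}). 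Two points worth tightening in your write-up: the inductive hypothesis you need on $P^\perp$ is the existence of a symplectic basis (the classification of non-degenerate alternating forms over a field), from which transitivity follows by mapping one completed symplectic basis onto another; and you silently identify $\Sp(\cG_N)$, defined through abstract group automorphisms preserving a $\T$-valued form, with the linear group $\Sp(2N,\F_p)$ --- this is harmless, since $\Aut(\F_p^{2N})=\GL(2N,\F_p)$ (integer scalar action is group-theoretic) and the image of $\sigma$ lies in the $p$-torsion subgroup of $\T$, but it deserves a sentence.
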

\begin{proof}
Let $g, h \in \cG$ be two non null-elements. By definition, we can write $g = (g_i)$, $h = (h_i)$ with $g_i,h_i \in \F_p^2$ where only a finite number of $g_i$ and $h_i$ are not null (for simplicity in this lemma we are using the additive notation for the operation of $\cG$). Let us write $g_{i_1}, \ldots, g_{i_n}$ and $h_{j_1}, \ldots, h_{j_m}$ for the non-null components of $g$ and $h$. We can always find elements $\phi_g, \phi_h \in \Sp(\cG)$ such that
\[ \phi_g(g) = (\underbrace{(x, x), (x, x), \ldots, (x, x)}_{n \text{ times}}, (0,0), \ldots) \]
and
\[ \phi_h(h) = (\underbrace{(x, x), (x, x), \ldots, (x, x)}_{m \text{ times}}, (0,0), \ldots), \]
for an arbitrary fixed non null-element $x$ of $\F_p$. This is due to the fact that in a direct sum of the form $(\F_p^2, \sigma_2) \oplus (\F_p^2, \sigma_2)$ the map that sends an element $(x_1, x_2, x_3, x_4)$ to $(x_3, x_4, x_1, x_2)$ is easily seen to be in the symplectic group. The general result follows by induction.\\
 In this way, we can map $g$ and $h$ to vectors whose non-null components are in the first $n$ and $m$ coordinates respectively. Then, the symplectic group $\Sp((\F_p^2, \sigma_2))$ of each factor maps injectively in $\Sp(\cG)$ because they are split hyperbolic plane in the sense of Definition \ref{defn:split_hyperbolic_plane}. Then, applying Lemma \ref{lem:Orbita_su_F_p} we get the desired form for $\phi_g(g)$ and $\phi_h(h)$.

It remains to check that we can always map the elements $\phi_g(g)$ and $\phi_h(h)$ to each other via an element of $\Sp(\cG)$. The general case can be reduced to the case $\phi_g(g) = ((x, x), (x, x))$ and $\phi_h(h) = ((x, x), (0,0))$ by induction, so we discuss only this case. Then, it is enough to show that there exists a symplectic automorphism $\psi: (\F_p^4, \sigma_4) \to (\F_p^4, \sigma_4)$ such that  $\psi(\phi_g(g)) = \phi_h(h)$. Consider the following matrix
\[ \bM =
\begin{pmatrix}
1& 0 & 0 & 0 \\
0 & 1 & 1 & -1 \\
1 & 0 & 1 & 0 \\
1 & 0 & 0 & 1
\end{pmatrix}.
 \]
It is easy to check that the automorphism induced by $\bM$ on $(\F_p^4, \sigma_4)$ belongs to the symplectic group. Since we have
\[
\begin{pmatrix}
1 & 0 & 0 & 0 \\
0 & 1 & 1 & -1 \\
1 & 0 & 1 & 0 \\
1 & 0 & 0 & 1
\end{pmatrix}
\begin{pmatrix}
x \\
x \\
0 \\
0
\end{pmatrix}
= 
\begin{pmatrix}
x \\
x \\
x \\
x
\end{pmatrix}
\]
we can conclude the proof.
\end{proof}

As immediate consequence of Lemma \ref{lemma:infinite_torsion_orbits}  we obtain that a $\Sp(\cG)$-invariant state on $\cW_\cG$ must be constant on all generators. With the next theorem prove uniqueness of $\Sp(\cG)$-invariant states on $\cW_\cG$.

\begin{thm} \label{thm:infinite_torsion}
Let $\cW_\cG$ be the symplectic twisted group algebra associated to $\cG = \bigoplus_{i \in I} (\F_p^2, \sigma_2)$ and consider the constant normalized functional $\omega: \cW_\cG \to \C$ given by  
\[ \omega(g) = \begin{cases}
1 & \text{ if } g = 1_\cG \\
q \in [-1, 1], \text{ otherwise. } 
\end{cases}
 \]
Then, $\omega$ is positive if and only if $q = 0$.
\end{thm}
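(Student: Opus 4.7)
The case $q = 0$ is immediate: then $\omega$ reduces to the tracial state $\tau$, and $\omega(\aa^*\aa) = \sum_g |\alpha_g|^2 \geq 0$.

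For the converse, the plan is to restrict the positivity condition $\omega(\aa^*\aa) \geq 0$ to elements supported in the finite subgroup
\[ \cG_n := \bigoplus_{i=1}^{n}(\F_p^2, \sigma_2) \subset \cG, \]
which is legitimate because $|I|$ is infinite, and then to show that the resulting family of constraints, indexed by $n$, forces $q = 0$ as $n\to\infty$.

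Writing $\aa = \sum_{g \in \cG_n} \alpha_g\, g$, a direct use of the twisted-product formula, the bilinearity of $\sigma$, and the prescription $\omega(1_\cG) = 1$, $\omega(g) = q$ for $g \neq 1_\cG$, gives
\[ \omega(\aa^*\aa) = \alpha^* \bH_n \alpha, \qquad \bH_n = (1-q)\,\Id + q\, \bM_n, \]
where $\bM_n$ is the Hermitian $p^{2n} \times p^{2n}$ matrix with entries $(\bM_n)_{g,h} = \Omega(g,h)$. Since $\sigma$ on $\cG_n$ is a direct sum of the block forms, the bilinearity of $\sigma$ yields the tensor-product factorisation
\[ \bM_n = \bM_1^{\otimes n}, \]
where $\bM_1$ is the corresponding $p^2 \times p^2$ single-block matrix.

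The heart of the proof is the computation of the spectrum of $\bM_1$. By non-degeneracy of $\sigma_2$ on $\F_p^2$, the map $h \mapsto \Omega(u,h)$ is a non-trivial character of $\F_p^2$ for each $u \neq 0$, so standard character orthogonality gives
\[ (\bM_1 \bM_1^*)_{g,k} = \sum_{h \in \F_p^2} \Omega(g,h)\,\overline{\Omega(k,h)} = \sum_{h \in \F_p^2} \Omega(g-k,h) = p^2\,\delta_{g,k}, \]
so $\bM_1^2 = p^2\,\Id$. As $\bM_1$ is Hermitian, its eigenvalues are $\pm p$; the trace identity $\mathrm{tr}(\bM_1) = p^2$ then pins the multiplicities at $p(p+1)/2$ for $+p$ and $p(p-1)/2$ for $-p$, both strictly positive. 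By the tensor factorisation, the eigenvalues of $\bM_n$ are $\pm p^n$, each occurring with strictly positive multiplicity whenever $n \geq 1$.

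Consequently $\bH_n$ has eigenvalues $1 + (p^n - 1) q$ and $1 - (p^n + 1) q$, and positivity of $\omega$ forces both to be non-negative:
\[ -\frac{1}{p^n - 1} \leq q \leq \frac{1}{p^n + 1} \qquad \text{for every } n \in \N. \]
Because $|I|$ is infinite we may let $n \to \infty$, which squeezes $q$ to $0$.

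The main obstacle is the pair of identities $\bM_n = \bM_1^{\otimes n}$ and $\bM_1^2 = p^2\,\Id$; once these are in hand the rest is straightforward eigenvalue bookkeeping, and the hypothesis that $I$ is infinite is precisely what drives the sandwich of bounds down to $\{0\}$.
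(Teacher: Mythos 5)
Your proof is correct, and it reaches the conclusion by a genuinely different mechanism than the paper's. The paper never diagonalizes the full Gram matrix: for each $n$ and each $p$-th root of unity $e^{2\pi i k/p}$ it constructs $np$ group elements with constant pairwise cocycle value (by solving a linear system over $\F_p$, which is exactly where the infinite rank of $\cG$ enters), so that the associated Gram matrix has ones on the diagonal, $q$ along the first row and column, and $q e^{2\pi i k/p}$ elsewhere; averaging over $k$ annihilates the generic off-diagonal entries and leaves an arrowhead matrix whose determinant $1-(np-1)q^2$ is eventually negative for $q\neq 0$, contradicting the fact that a convex combination of positive matrices is positive. You instead compute the exact spectrum of the full Gram matrix on $\C[\cG_n]$ via character orthogonality ($\bM_1^2=p^2\,\Id$, using non-degeneracy of $\sigma_2$) and the tensor factorization $\bM_n=\bM_1^{\otimes n}$, obtaining the sharp window $-1/(p^n-1)\le q\le 1/(p^n+1)$. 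Your route buys sharp quantitative bounds, an explicit treatment of the easy direction $q=0$ (left implicit in the paper), and it transfers verbatim to $(\Z/n\Z)^2$-blocks, i.e.\ the paper's subsequent corollary, since only non-degeneracy of the bicharacter is used; the paper's averaging trick buys economy, needing only finitely many well-chosen elements and no spectral computation. Two harmless nits: the off-diagonal Gram entries are $\omega(g^{*}h)=q\,\overline{\Omega(g,h)}$, so strictly $\bH_n=(1-q)\,\Id+q\,\overline{\bM_n}$ --- immaterial, since $\overline{\bM_n}=\bM_n^{T}$ is Hermitian with the same spectrum --- and your orthogonality step uses the bicharacter identity $\Omega(g,h)\,\overline{\Omega(k,h)}=\Omega(g-k,h)$, which holds for the normalization $\Omega=e^{2\pi i\sigma}$ that the paper itself employs in its examples (its nominal embedding $x\mapsto e^{i\pi x}$ of $\T$ into $\C^{\times}$ is not well defined), so this convention issue is inherited from the paper rather than introduced by you.
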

Before entering into the details of the proof, we define some useful notation.
\begin{notation}\label{eq:M_notation}
 With  $\bM_n(p, q)$, we denote the $n \times n$ matrix given by
\begin{align*} 
    (\bM_n(p, q))_{j,j} &= 1 &   j \geq 1\\
    (\bM_n(p, q))_{1,j}&=(\bM_n(p, q))_{j,1} = p &   j > 1\\
    (\bM_n(p, q))_{i,j} &= (\bM_n(p, q))_{j,i} = q &  \text{else}
    \end{align*} 
for which $p, q \in \C$. 
\end{notation}
\begin{proof}[Proof of Theorem~\ref{thm:infinite_torsion}]
Consider any element in $g \in \cG$. We want to find, for any $n \in \N$, suitable sequences of elements  $\aa_{1, k}, \ldots, \aa_{n p, k} \in \cW_\cG$, for $1 \le k \le {p}$ such that the matrix associated to the quadratic for $\omega(\aa_{i,k}^* \aa_{j,k})$, for $1 \le i {, j} \le n p$, is of the form $\bM_{n p}(q, q e^{2 \pi \i \frac{k}{p}})$, using the notation of Notation~\ref{eq:M_notation}. If $q$ is such that the functional $\omega$ is positive, then all matrices $\bM_{n p}(q, q e^{2 \pi \i \frac{k}{p}})$ must be positive.

Suppose that such elements exist. Then, we can consider the matrix
\[ \bR_n = \frac{1}{p} \sum_{k = 1}^p     \bM_{n p}(q, q e^{2 \pi \i \frac{k}{p}}) = \bM_{n p}(q, 0). \]
By hypothesis this can be done for any $n$. Hence, for a fixed $q$ there exists a $n$ big enough such that the matrix $\bR_n$ is non-positive, as it follows immediately by computing the determinant of $\bR_n$. But this is in contradiction with the hypothesis that all the matrices $\bM_{pn}(q, q e^{2 \pi \i \frac{k}{p}})$ are positive, as the convex sum of positive matrices must be a positive matrix.

It remains to show that it is always possible to find such elements $\aa_{i, k}$, for which the matrix associated to the quadratic for $\omega(\aa_{i,k}^* \aa_{j,k})$ is $\bM_{n p}(q, q e^{2 \pi \i \frac{k}{p}})$. To this end, consider a generic element $e_1 \in \cG$. It is enough to find for any $n \in \N$ elements $e_2, \ldots, e_n \in \cG$ such that
\begin{equation} \label{eq:sigma_system}
\sigma(e_i, e_j) = e^{2 \pi \i k/p }. 
\end{equation}
But since $e^{2 \pi \i k/p }$ is a primitive $p$-th root of $1$, we can identify the elements $e^{2 \pi \i k/p }$ with elements of $\F_p$ and the \eqref{eq:sigma_system} becomes a linear system of finitely many equations with $\F_p$ coefficients. Since $\F_p$ is a field and $\cG$ is an infinite dimensional vector space over it we can always find a suitable finite dimensional subspace of $\cG$ where the system \eqref{eq:sigma_system} admits solutions, concluding the proof.
\end{proof}

We conclude this section, by showing how the proof of Theorem \ref{thm:infinite_torsion} can be generalized to the case when $\F_p$ is replaced with $\Z/n\Z$. Indeed, the case when $n = p^f$, and therefore $(\Z/n\Z)^2$ is equipped with is canonical $\T$-symplectic form determined by $\sigma_2((1,0), (0,1)) = e^{\frac{2 \pi \i}{n}}$, can be dealt in the same way as done in Theorem \ref{thm:infinite_torsion}, using the fact that in this case there are $f$ different orbits each of which with infinite element, besides the trivial orbit of the identity element. Therefore the following corollary.

\begin{cor} \label{cor:infinite_torsion}
Theorem \ref{thm:infinite_torsion} remains true when for $\cG = \bigoplus_{i \in I} ((\Z/n\Z)^2, \sigma_2)$.
\end{cor}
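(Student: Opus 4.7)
The plan is to adapt the proof of Theorem~\ref{thm:infinite_torsion} in two stages: first the prime power case $n = p^f$, then the general case via primary decomposition. For $n = p^f$, the orbit analysis of Lemma~\ref{lemma:infinite_torsion_orbits} generalizes: the orbits of $\Sp(\cG)$ on $\cG = \bigoplus_I ((\Z/p^f\Z)^2, \sigma_2)$ are the singleton $\{1_\cG\}$ and, for each $j \in \{1,\dots,f\}$, the set $\cO_j$ of elements of additive order exactly $p^j$. The proof mirrors Lemma~\ref{lemma:infinite_torsion_orbits}: permutations of copies concentrate the support of any element in finitely many coordinates; the transitivity of $\Sp(2,\Z/p^f\Z)$ on elements of fixed order in $(\Z/p^f\Z)^2$ (compare Lemma~\ref{lem:Orbita_su_F_p} in the prime case) normalizes each coordinate within its orbit type; and the merging matrix at the end of Lemma~\ref{lemma:infinite_torsion_orbits} consolidates several non-trivial coordinates of order $p^j$ into a single one. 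Each $\cO_j$ is infinite, so by Proposition~\ref{prop:real_values} and $\Sp(\cG)$-invariance, $\omega$ takes a constant real value $q_j \in [-1,1]$ on $\cO_j$.

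To force $q_j = 0$ we run the matrix-averaging argument of Theorem~\ref{thm:infinite_torsion} with $p^j$-th roots of unity in place of $p$-th roots. For each $m \in \N$ and $k \in \{1,\dots,p^j\}$, choose elements $e_{1,k},\dots,e_{mp^j-1,k} \in \cO_j$ such that
\[ \sigma(e_{i,k}, e_{l,k}) \equiv k/p^j \pmod 1 \qquad \text{and} \qquad e_{i,k}^{-1} e_{l,k} \in \cO_j \quad \text{for all } i \neq l, \]
and set $\aa_{1,k} = 1_\cG$ and $\aa_{i+1,k} = e_{i,k}$. Such $e_{i,k}$ exist because the constraints form a finite system of $\Z/p^f\Z$-linear equations in the coordinates of the $e_{i,k}$, which is solvable by spreading them over sufficiently many copies of $(\Z/p^f\Z)^2$. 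The resulting Hermitian matrix $\bigl(\omega(\aa_{i,k}^* \aa_{l,k})\bigr)$ equals $\bM_{mp^j}(q_j, q_j e^{2\pi i k/p^j})$ (with conjugate phase below the diagonal). Averaging over $k$ and using $\sum_{k=1}^{p^j} e^{2\pi i k/p^j} = 0$ yields the real symmetric matrix $\bM_{mp^j}(q_j, 0)$, whose minimal eigenvalue $1 - |q_j|\sqrt{mp^j - 1}$ becomes negative for $m$ large unless $q_j = 0$.

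For general $n = \prod_i p_i^{a_i}$, the Chinese remainder theorem gives an isomorphism of $\T$-symplectic abelian groups $(\Z/n\Z)^2 \cong \bigoplus_i (\Z/p_i^{a_i}\Z)^2$, hence $(\cG, \sigma) \cong \bigoplus_i (\cG_{p_i}, \sigma_{p_i})$ with $\cG_{p_i} = \bigoplus_I (\Z/p_i^{a_i}\Z)^2$ and $\sigma$ vanishing between distinct primary components by Proposition~\ref{prop:direct_sum_torsion}. Since each $\Sp(\cG_{p_i})$ embeds in $\Sp(\cG)$ via the identity on the other factors, the restriction of any $\Sp(\cG)$-invariant $\omega$ to $\cW_{\cG_{p_i}}$ is $\Sp(\cG_{p_i})$-invariant, hence tracial by the prime power case. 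For a general non-trivial $g \in \cG$ one reruns the matrix-averaging argument directly on $\cG$ with $n$-th roots of unity, choosing for each $k \in \{1,\dots,n\}$ elements $g_{1,k}, \dots, g_{mn-1,k}$ in the $\Sp(\cG)$-orbit of $g$ with $g_{i,k}^{-1} g_{l,k}$ in the same orbit and $\sigma(g_{i,k}, g_{l,k}) \equiv k/n \pmod 1$. By the Chinese remainder theorem these constraints decouple into independent constraints in each primary component, each solvable as in the prime power case. Averaging over $k$ produces the matrix $\bM_{mn}(\omega(g), 0)$ and forces $\omega(g) = 0$.

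The most delicate step is realizing the $\sigma$-value constraints and the orbit constraints on the pairwise differences simultaneously. For small orbit classes $j \leq f/2$ in the prime power case, the subgroup of elements of order dividing $p^j$ inside a single copy $(\Z/p^f\Z)^2$ is $\sigma$-isotropic, so the required non-zero phases $k/p^j$ cannot be produced inside a single copy; the infinite direct sum structure of $\cG$ must be exploited essentially to spread the $e_{i,k}$ over several copies while keeping each of them, and each of the differences $e_{i,k}^{-1} e_{l,k}$, in the orbit $\cO_j$. This replaces the straightforward $\F_p$-linear system of Theorem~\ref{thm:infinite_torsion} by a more involved $\Z/p^f\Z$-module-theoretic argument that must be checked with some care.
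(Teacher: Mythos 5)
There is a genuine gap in your prime-power stage, and it sits exactly at the step you flag as ``delicate''. Your key solvability claim --- that for each $k$ one can find $e_{1,k},\dots,e_{mp^j-1,k}$ in the orbit $\cO_j$ of order-$p^j$ elements with $\sigma(e_{i,k},e_{l,k})\equiv k/p^j \pmod 1$ --- is false for every $j<f$, not merely hard. If $p^j e = p^j e' = 0$ in $\cG=\bigoplus_I((\Z/p^f\Z)^2,\sigma_2)$, then $e=p^{f-j}u$ and $e'=p^{f-j}v$ for some $u,v\in\cG$, so by bilinearity $\sigma(e,e')=p^{2(f-j)}\sigma(u,v)$, which lies in $\tfrac{1}{p^{2j-f}}\Z/\Z$ when $2j>f$ and is identically $0$ when $2j\le f$. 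Thus the attainable phases on $\cO_j\times\cO_j$ are at most the $p^{2j-f}$-th roots of unity, never the full set of $p^j$-th roots unless $j=f$. This obstruction comes from bilinearity on all of $\cG$ at once, so it is global: ``spreading the $e_{i,k}$ over sufficiently many copies'', your proposed remedy in the last paragraph, cannot produce the missing phases, and no $\Z/p^f\Z$-linear system realizes them. Worse, the orbit-wise vanishing $q_j=0$ you are aiming for is outright false: already for $f=2$, the functional taking value $1$ on the subgroup $p\cG$ and $0$ elsewhere is positive (since $\sigma$ vanishes identically on $p\cG\times p\cG$, the standard coset computation writes $\omega(\aa^*\aa)$ as a sum over $p\cG$-cosets of squared moduli) and is $\Sp(\cG)$-invariant because $p\cG$ is a characteristic subgroup; yet it equals $1\neq 0$ on the orbit of order-$p$ elements. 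So your program of killing each orbit value separately is unsalvageable, and the same divisibility obstruction also breaks your general-$n$ step, where you ask for $\sigma(g_{i,k},g_{l,k})\equiv k/n$ with all $g_{i,k}$ in the $\Sp(\cG)$-orbit of an element $g$ whose order may be a proper divisor of $n$.

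The statement actually being proved, however, concerns the \emph{constant} functional of Theorem \ref{thm:infinite_torsion}: one value $q$ on all $g\neq 1_\cG$. For that statement there is no need to stay inside a single orbit: run the averaging argument with elements of full order $n$ only. For instance, index copies by pairs $\{i,l\}$ and give $e_i$ the component $(1,0)$ and $e_l$ the component $(0,k)$ in the copy $\{i,l\}$; then $\sigma(e_i,e_l)=k/n$ for all $i<l$, each $e_i\neq 1_\cG$ and each $e_i^{-1}e_l\neq 1_\cG$. The differences may fall into lower orbits, but the constant functional assigns them the same value $q$, so the Gram matrices are exactly $\bM_N(q,q\,e^{2\pi\i k/n})$ and averaging over $k=1,\dots,n$ forces $q=0$ as in Theorem \ref{thm:infinite_torsion}. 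This is in substance the paper's route: the prime-power case ``in the same way'' as the theorem, followed by the reduction of general $n$ via Corollary \ref{cor:direct_sum_torsion} and $\Sp(\bigoplus_{p\in\P}\cG_p)\cong\prod_{p\in\P}\Sp(\cG_p)$ --- your CRT stage matches this and is fine, but your strengthened per-orbit stage 1 is both unnecessary for the stated corollary and false as formulated.
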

\begin{proof}
We already remarked that the case when $n = p^f$ is similar to the case of the theorem. The general case follows from Corollary \ref{cor:direct_sum_torsion} and the observation that in this case 
\[ \Sp(\bigoplus_{p \in \P} \cG_p) \cong \prod_{p \in \P} \Sp(\cG_p) \]
where $\cG_p$ is the $p$-primary part of $\cG$.
\end{proof}

 We conclude this section by remarking that it is easy to find examples of infinite torsion groups on which the action of any group of automorphisms cannot give ergodic \stg-algebras, as their $p$-primary parts are finite, \eg $\bigoplus_{p \in \P} \F_p$.

\section{Conclusions} \label{sec:conclusion}
We conclude this paper with some conjectures and an application of our results to abelian Chern-Simons theory. To this end, let us remark that, in all cases we have considered so far, we proved the uniqueness of $\Sp(\cG)$-invariant states when the group $\cG$ was equipped with a non-degenerate pre-symplectic form. Indeed, it is easy to see that this is a necessary condition  on $\cW_\cG$ for having a unique $\Sp(\cG)$-invariant state. 
\begin{cor}[Corollary to Proposition~\ref{thm:CTori}.]\label{cor:degenerate sympl. form}
Let $(\cG,\sigma)$ be a $\T$-pre-symplectic abelian group. If $\sigma$ is degenerate, then $\cW_\cG$ admits plenty of $\Sp(\cG)$-invariant states.
\end{cor}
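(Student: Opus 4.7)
The idea is to exploit the existence of a non-trivial radical and reduce positivity to the untwisted setting already handled in Proposition~\ref{thm:CTori}. Since $\sigma$ is degenerate, the radical
\[
\cG^\perp = \{x\in\cG \mid \sigma(x,y)=0 \text{ for all } y\in\cG\}
\]
is a non-trivial subgroup of $\cG$. Two observations are immediate and crucial. First, $\cG^\perp$ is stable under $\Sp(\cG)$: if $\Theta\in\Sp(\cG)$ and $x\in\cG^\perp$, then $\sigma(\Theta(x),y)=\sigma(x,\Theta^{-1}(y))=0$ for every $y\in\cG$. Second, the cocycle $\Omega$ trivializes on $\cG^\perp\times\cG^\perp$, so the subalgebra of $\cW_\cG$ generated by $\cG^\perp$ is canonically identified with the untwisted group $*$-algebra $\C[\cG^\perp]$.

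\textbf{Construction and invariance.} For each $q\in[0,1]$ I propose the linear functional $\omega_q:\cW_\cG\to\C$ determined by
\[
\omega_q(g)=\begin{cases} 1 & g=1_\cG,\\ q & g\in\cG^\perp\setminus\{1_\cG\},\\ 0 & g\notin\cG^\perp,\end{cases}
\]
and extended by linearity. Normalisation is built in, and since $\Sp(\cG)$ preserves $\cG^\perp$ (hence also its complement) and $\omega_q$ depends only on whether the argument lies in $\cG^\perp$, the functional is $\Sp(\cG)$-invariant automatically. As $q$ varies over a full interval, the states $\omega_q$ are pairwise distinct, which will yield the claimed abundance of invariant states provided each $\omega_q$ is positive.

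\textbf{Positivity via coset decomposition.} This is the only non-trivial point. Choose coset representatives $\{g_i\}_{i\in I}$ for $\cG/\cG^\perp$ and decompose any $\aa\in\cW_\cG$ as $\aa=\sum_i \aa_i$, with $\aa_i=\sum_{k\in\cG^\perp}\a_{i,k}\, g_i k$ supported on the coset $g_i\cG^\perp$ (and only finitely many coefficients non-zero). Using the bilinearity and skew-symmetry of $\sigma$, one checks that $\aa_i^*\aa_j$ is supported on $g_i^{-1}g_j\cG^\perp$, so $\omega_q(\aa_i^*\aa_j)=0$ unless $i=j$, and therefore
\[
\omega_q(\aa^*\aa)=\sum_i \omega_q(\aa_i^*\aa_i).
\]
For each fixed $i$ the twist in $\aa_i^*\aa_i$ involves only factors $\Omega((g_ik)^{-1},g_i l)$ with $k,l\in\cG^\perp$; expanding the symplectic form and using $\sigma(k,\cdot)=\sigma(\cdot,l)=0$ gives $\Omega((g_i k)^{-1},g_i l)=1$. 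Hence $\omega_q(\aa_i^*\aa_i)$ coincides with the evaluation of the constant functional of Proposition~\ref{thm:CTori} on $\bb_i^*\bb_i\in\C[\cG^\perp]$, where $\bb_i=\sum_k \a_{i,k}k$. Positivity of $\omega_q$ on $\cW_\cG$ therefore follows from the positivity statement established in Proposition~\ref{thm:CTori} applied to $\C[\cG^\perp]$.

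\textbf{Expected obstacle.} The conceptual content is transparent, and the main point where one must be a little careful is the cancellation $\Omega((g_ik)^{-1},g_i l)=1$ on each diagonal block, which relies on the fact that $\cG^\perp$ pairs trivially against the whole of $\cG$, not just against itself. Once this is checked, the rest is a bookkeeping reduction to Proposition~\ref{thm:CTori}.
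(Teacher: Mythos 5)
Your proposal is correct and takes essentially the same route as the paper: the identical one-parameter family of functionals supported on the radical $\cG^\perp$, invariance deduced from the $\Sp(\cG)$-stability of $\cG^\perp$, and positivity obtained by reduction to Proposition~\ref{thm:CTori}. Your coset decomposition of $\aa^*\aa$ and the check that $\Omega$ trivializes against $\cG^\perp$ merely spell out the step the paper compresses into ``positive on account of the computations done in the proof of Proposition~\ref{thm:CTori}.''
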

\begin{proof}
If the pre-symplectic form on $\cG$ is degenerate, then $\cG^\perp \ne 0$ and $\Phi(\cG^\perp) \subset \cG^\perp$ for all $\Phi \in \Sp(\cG)$. Then, it is easy to check that any functional defined by
$$
\tau(g)=\begin{cases} 1 & \text{ if } g=1_\cG \\ 
0 & g \not\in \cG^\perp \\
q & g \in \cG^\perp  \end{cases}
$$
with $q \in (0, 1)$, is $\Sp(\cG)$-invariant and positive on account of the computations done in the proof of Proposition \ref{thm:CTori}.
\end{proof}
The next theorem summarizes our main results on the uniqueness of $\Sp(\cG)$-invariant states, \ie Corollary~\ref{cor:degenerate sympl. form}, Theorem~\ref{thm:NCtori_no_torsion}, Theorem~\ref{thm:infinite_torsion} and Corollary~\ref{cor:infinite_torsion}.

\begin{thm} \label{thm:sum_up}
Let $(\cG, \sigma)$ be a $\T$-pre-symplectic abelian group, then
\begin{enumerate}
\item if $\sigma$ is degenerate, then $\cW_\cG$ admits plenty of $\Sp(\cG)$-invariant states;
\item if $(\cG, \sigma)$ is symplectic, irrational (in the sense of Definition \ref{defn:irrational}), the symplectic form is diagonalizable (in the sense of Notation \ref{not: diagonalization}) and ${\Sp(\cG)\otimes_\Z \Q} \supset \Sp^H(\cG \otimes_\Z \Q)$, then $\cG$ is torsion-free and $\cW_\cG$ admits only one invariant state;
\item if $(\cG, \sigma) \cong \bigoplus_{i \in I} ((\Z/n\Z)^2, \sigma_2)$, where $I$ has infinite cardinality, then the associated \stg-algebra admits a unique $\Sp(\cG)$-invariant state.
\end{enumerate}
\end{thm}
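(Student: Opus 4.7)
This theorem is a compendium: each of the three claims has essentially been established earlier in the paper, so the plan is to assemble the correct references and supply the few short observations that glue them together.

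For claim (1), the statement is literally Corollary \ref{cor:degenerate sympl. form}, so I would simply invoke it: given a degenerate pre-symplectic form, the radical $\cG^\perp$ is a non-trivial $\Sp(\cG)$-invariant subgroup, and any functional that is $1$ at $1_\cG$, some constant $q \in (0,1)$ on elements of $\cG^\perp$, and $0$ elsewhere, is $\Sp(\cG)$-invariant and positive by the same Hermitian-matrix argument used in the proof of Proposition \ref{thm:CTori}.

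For claim (2), I first need to deduce that $\cG$ is torsion-free from the irrationality hypothesis. The key remark is a small sharpening of Lemma \ref{lem:torsion}: if $x \in \cG_\tors$ with $n x = 0$, then for every $y \in \cG$ one has $n\sigma(x,y) = \sigma(nx,y) = 0$, so $\sigma(x,y)$ lies in the torsion subgroup $\Q/\Z$ of $\T$. By the irrationality assumption (Definition \ref{defn:irrational}), $\sigma(\cG\otimes\cG)\cap \Q/\Z = \{0\}$, hence $\sigma(x,y) = 0$ for every $y$; non-degeneracy then forces $x = 0$, so $\cG$ is torsion-free. Once this is in hand, the remaining hypotheses (irrational, diagonalizable in the sense of Notation \ref{not: diagonalization}, and $\Sp(\cG)\otimes_\Z\Q \supset \Sp^H(\cG\otimes_\Z\Q)$) are exactly those of Theorem \ref{thm:NCtori_no_torsion}, which delivers uniqueness of the $\Sp(\cG)$-invariant state (namely the canonical trace).

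For claim (3), I would split into the prime-power and general cases. When $n = p^f$ is a prime power, apply Theorem \ref{thm:infinite_torsion} (for $f = 1$) together with the extension sketched in the remarks just before Corollary \ref{cor:infinite_torsion}: Lemma \ref{lemma:infinite_torsion_orbits} shows that $\Sp(\cG)$ has essentially one orbit besides $\{1_\cG\}$ on each primary component, so an $\Sp(\cG)$-invariant state is constant on the non-identity generators, and the positivity argument via the matrices $\bM_{np}(q, q e^{2\pi\i k/p})$ of Notation \ref{eq:M_notation} forces that constant to be $0$. For general $n$, Corollary \ref{cor:direct_sum_torsion} decomposes $\cG$ into its $p$-primary components $\cG_p$, each of which is an infinite direct sum of copies of $((\Z/p^{f_p}\Z)^2, \sigma_2)$; since $\Sp(\cG) \cong \prod_{p\in\P}\Sp(\cG_p)$, each $\cG_p$-restriction of an invariant state must be the tracial state by the prime-power case, hence so is the state on $\cW_\cG$ itself. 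I do not expect any step here to be a substantive obstacle: the theorem is a packaging of previously proven results, and the only genuinely new observation needed is the short deduction of torsion-freeness from irrationality in claim (2).
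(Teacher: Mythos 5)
Your proposal is correct and matches the paper's treatment exactly: the paper offers no separate proof of this theorem, presenting it as a summary of Corollary~\ref{cor:degenerate sympl. form} (claim 1), Theorem~\ref{thm:NCtori_no_torsion} (claim 2), and Theorem~\ref{thm:infinite_torsion} with Corollary~\ref{cor:infinite_torsion} (claim 3), which is precisely the assembly you carry out. Your short deduction of torsion-freeness in claim (2) --- torsion elements force $\sigma$-values into $\Q/\Z$, which irrationality and non-degeneracy rule out --- is a correct sharpening of Lemma~\ref{lem:torsion} that the paper asserts but leaves implicit, so it is a welcome addition rather than a deviation.
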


From these results, we conjecture the following to hold.

\begin{conj} \label{conj:symplectic_ergodic}
If $(\cG, \sigma)$ is a $\T$-symplectic abelian group such that the action of $\Sp(\cG)$ on $\cW_\cG$ is ergodic, then there are no non-trivial $\Sp(\cG)$-invariant states on $\cW_\cG$.
\end{conj}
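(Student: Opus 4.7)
The plan is first to reduce, as far as possible, to the three cases already settled in Theorem~\ref{thm:NCtori_no_torsion}, Theorem~\ref{thm:infinite_torsion} and Corollary~\ref{cor:infinite_torsion}, and then, where a reduction fails, to run a unified positivity argument modelled on the proof of Theorem~\ref{thm:infinite_torsion}. Suppose $\omega$ is a $\Sp(\cG)$-invariant state and, arguing by contradiction, that $\omega(g) = q \neq 0$ for some $g \neq 1_\cG$. By Proposition~\ref{prop:ergodic} the $\Sp(\cG)$-orbit $\mathcal{O}_g$ is infinite, and $\omega$ is constant equal to $q$ on $\mathcal{O}_g$; the goal is to derive a contradiction.

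The first step I would attempt is structural. Using Corollary~\ref{cor:direct_sum_torsion} one can isolate the $p$-primary components of $\cG_\tors$, and using Proposition~\ref{prop:direct_sum_torsion} together with the non-degeneracy of $\sigma$ one can try to split $\cG$ as an orthogonal direct sum of a torsion-free piece and of the $\cG_p$'s, each preserved by a subgroup of $\Sp(\cG)$. If such a splitting respects both $\sigma$ and the orbit $\mathcal{O}_g$, then the restriction of $\omega$ to the summand containing $g$ is an invariant state on a smaller \stg-algebra to which one of the three known results applies. This reduction should succeed whenever $(\cG,\sigma)$ is essentially a direct sum of pieces of the already-treated types and retains ergodicity on each factor; it leaves open genuinely mixed cases and the intricate finite-rank situations illustrated by Example~\ref{ex:conjugate sympl group}.

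For the remaining cases I would attempt the following unified argument. Pick $h_1,\dots,h_N \in \mathcal{O}_g$ and consider the Gram-type matrix $M^{(N)}_{ij} = \omega(h_i^* h_j) = \Omega(h_i^{-1},h_j)\,\omega(h_i^{-1}h_j)$, which must be positive semi-definite for every $N$. The strategy, modelled on the proof of Theorem~\ref{thm:infinite_torsion}, is to choose the $h_i$ inside the orbit so that the off-diagonal values $\omega(h_i^{-1}h_j)$ are constant (say equal to some $q' \in [-1,1]$) while the twist phases $\Omega(h_i^{-1},h_j)$ run over all $k$-th roots of unity as a single parameter is varied; convexly averaging the resulting positive matrices over this parameter then produces the matrix $\bM_N(q,0)$ of Notation~\ref{eq:M_notation}, which fails positivity for $N$ large, yielding the contradiction.

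The principal obstacle is the engineering step just described: given only ergodicity of $\Sp(\cG)$ on $\cG$, it is not clear that inside a single orbit one can always find arbitrarily large families with the required pattern of pairwise symplectic pairings. In Theorem~\ref{thm:infinite_torsion} the direct-sum structure and the abundance of coordinate-swapping symplectic automorphisms make this immediate, and in Theorem~\ref{thm:NCtori_no_torsion} the split hyperbolic-plane decomposition plays the analogous role. In the full generality of the conjecture neither device is available a priori, and I expect that a complete proof will require either an amenability- or fixed-point-type argument on $\Sp(\cG)$ (which is delicate since such groups need not be amenable), a structural theorem decomposing every ergodic $\T$-symplectic abelian group into already-handled pieces, or a genuinely new positivity argument extracting the required finite configurations from ergodicity alone.
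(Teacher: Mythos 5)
You should first note that the paper does not prove this statement at all: it is stated as Conjecture \ref{conj:symplectic_ergodic} in Section \ref{sec:conclusion}, and even the weaker Conjecture \ref{conj:symplectic_ergodic_weak} (which adds irrationality and diagonalizability hypotheses) is left open. The paper's actual results cover only three regimes --- degenerate forms (Corollary \ref{cor:degenerate sympl. form}), torsion-free irrational diagonalizable groups satisfying the extra hypothesis $\Sp(\cG)\otimes_\Z\Q \supset \Sp^H(\cG \otimes_\Z \Q)$ (Theorem \ref{thm:NCtori_no_torsion}), and infinite direct sums of $((\Z/n\Z)^2,\sigma_2)$ (Theorem \ref{thm:infinite_torsion} and Corollary \ref{cor:infinite_torsion}) --- and the authors explicitly identify proving that the extra hypothesis always holds as a possible route to the weak conjecture. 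So your proposal cannot be measured against a proof in the paper; it can only be judged on whether it closes the conjecture, and it does not, as you yourself acknowledge in your final paragraph.

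The gaps you flag are genuine, and two further points deserve precision. First, your structural reduction overreaches: Proposition \ref{prop:direct_sum_torsion} splits only torsion groups into pieces of coprime torsion, and there is no general mechanism for splitting off the torsion-free part of $\cG$ orthogonally, nor for guaranteeing that the stabilizing subgroup of $\Sp(\cG)$ acts ergodically on each summand; moreover diagonalizability can already fail on $\Z^4$ (Example \ref{ex:noncomm sympl form}), in which case $\Sp(\cG)$ can collapse to something as small as $\{\pm\Id\}$ (Example \ref{ex:conjugate sympl group}), and deciding whether ergodicity excludes all such degenerations is exactly the open content of the conjecture. Second, your ``unified positivity argument'' cannot work as stated in the irrational torsion-free case: the averaging trick of Theorem \ref{thm:infinite_torsion} relies on the twist phases $\Omega(h_i^{-1},h_j)$ exhausting the finite cyclic group of $p$-th roots of unity, so that the finite convex combination $\frac{1}{p}\sum_{k=1}^{p}\bM_{np}(q, q e^{2\pi\i k/p}) = \bM_{np}(q,0)$ kills the off-diagonal terms; when $\sigma$ is irrational the relevant phases are dense in the circle and never form a finite group, which is why the paper instead invokes Theorem 2.2 of \cite{BMP} through Lemma \ref{lemma:NCtori_no_Q2}, an argument requiring the split hyperbolic-plane structure rather than ergodicity alone. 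In short, your proposal is a sensible research program that correctly locates the hard step --- extracting controlled finite configurations of pairwise symplectic pairings from ergodicity alone --- but that step is precisely the conjecture's open core, and neither your sketch nor the paper resolves it.
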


A step in this direction is the following weak version of Conjecture \ref{conj:symplectic_ergodic}.

\begin{conj} \label{conj:symplectic_ergodic_weak}
If $(\cG, \sigma)$ is a irrational $\T$-symplectic abelian group such that the action of $\Sp(\cG)$ on $\cW_\cG$ is ergodic and $\sigma$ is diagonalizable, then there are no non-trivial $\Sp(\cG)$-invariant states on $\cW_\cG$.
\end{conj}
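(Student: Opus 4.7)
The plan is to adapt the strategy of Theorem \ref{thm:NCtori_no_torsion}, substituting ergodicity for the plane-automorphism hypothesis $\Sp(\cG)\otimes_\Z \Q \supset \Sp^H(\cG \otimes_\Z \Q)$. I would argue by contradiction: assume there exists a non-tracial $\Sp(\cG)$-invariant state $\omega$ on $\cW_\cG$, so that $\omega(g) = q \neq 0$ for some $g \in \cG$ with $g \ne 1_\cG$. By Proposition \ref{prop:ergodic}, the $\Sp(\cG)$-orbit $\cO(g)$ of $g$ is necessarily infinite, and by invariance $\omega$ is constant on $\cO(g)$ with value $q$.

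The first step is to extract an irrational rank-$1$-valued piece of the form from the support of $g$. Passing to the injective envelope $\cG \otimes_\Z \Q$, the extended form $\sigma \otimes_\Z \Q$ remains diagonalizable, and the $\Q$-hyperbolic plane $\cH_\Q$ generated by $g$ carries a restriction of the shape $r \sigma_2$ for some $r \in \T$ by Lemma \ref{lemma:unique_hyperbolic_plane}. The irrationality hypothesis on $\sigma$ forces $r \notin \Q/\Z$, so $\{n r \bmod 1 : n \in \Z\}$ is dense in $\T$. Diagonalizability also provides a splitting $(\cG \otimes_\Z \Q, \sigma \otimes \Q) \cong (\cH_\Q, \sigma|_{\cH_\Q}) \oplus (\cH_\Q^\perp, \sigma|_{\cH_\Q^\perp})$ of $\T$-symplectic abelian groups.

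Next I would attempt to build, using ergodicity, a finite family $h_1, \dots, h_n$ drawn from $\cO(g)$ whose pairwise pairings $\sigma(h_i, h_j)$ reproduce the configuration employed in Theorem 2.2 of \cite{BMP}. Once such a family is in hand, the Gram matrix with entries $\omega(h_i^* h_j) = \Omega(h_i^{-1}, h_j)\,\omega(h_i^{-1} h_j)$ becomes explicit up to the undetermined values $\omega(h_i^{-1} h_j)$, and a convex-combination averaging argument in the spirit of the proof of Theorem \ref{thm:infinite_torsion}, together with the density of the $\Z$-orbit of $r$ in $\T$, should force $q = 0$ by violating positivity of the resulting matrix, yielding the desired contradiction.

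The hard part will be constructing this family of elements from pure ergodicity. Without the plane-automorphism hypothesis of Theorem \ref{thm:NCtori_no_torsion}, there is no a priori reason why $\Sp(\cG)$ should contain the specific $\Sl(2,\Z)$-style transformations on $\cH_\Q$ needed to realize such a configuration inside a single orbit. Ergodicity alone guarantees that orbits are infinite but not that they are rich enough with respect to the $\sigma$-form. Bridging this gap would likely require either a new structural lemma showing that ergodicity combined with diagonalizability enforces the required abundance of orbit configurations, or an averaging technique that sidesteps the need to pin down specific orbit elements and instead extracts positivity constraints directly from the global structure of $\cW_\cG$. Precisely this subtlety is what keeps the statement a conjecture in the paper.
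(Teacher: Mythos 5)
This statement is not a theorem in the paper: it is stated as an open conjecture, with no proof given. So there is nothing for your argument to match, and indeed your proposal does not close the question --- as you yourself concede in the final paragraph. The gap you flag is genuine and is exactly the mathematical content of the conjecture: ergodicity, via Proposition \ref{prop:ergodic}, only tells you that every nontrivial $\Sp(\cG)$-orbit is infinite; it says nothing about the distribution of the values $\sigma(h_i,h_j)$ as the $h_i$ range over a single orbit. The positivity arguments you want to import (Theorem 2.2 of \cite{BMP} for the irrational case, the averaging of Theorem \ref{thm:infinite_torsion} for the torsion case) both require producing finite families of group elements whose pairwise pairings realize a \emph{prescribed} configuration, and in the paper this abundance is always supplied by an explicit supply of symplectic automorphisms --- either split hyperbolic planes with their full $\Sp(\cH_i)$ (Theorem \ref{thm:NCtori_no_torsion}) or the concrete matrices of Lemma \ref{lemma:infinite_torsion_orbits}. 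An infinite orbit could a priori be very ``thin'' with respect to $\sigma$, and no lemma in the paper rules this out. Note also that your two model arguments pull in different directions: the averaging in Theorem \ref{thm:infinite_torsion} exploits exact $p$-th roots of unity from torsion, while the irrational case exploits density of $\{nr \bmod 1\}$; your sketch does not explain how either mechanism is triggered by ergodicity alone.

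For comparison, the route the paper itself suggests is different from yours: rather than working inside a single orbit, the authors propose proving that the technical hypothesis $\Sp(\cG)\otimes_\Z \Q \supset \Sp^H(\cG \otimes_\Z \Q)$ of Theorem \ref{thm:NCtori_no_torsion} is \emph{automatically} satisfied under the hypotheses of the conjecture, which would reduce the conjecture to that theorem. Your single-orbit strategy is a reasonable alternative plan, but as written it is a program, not a proof; the ``new structural lemma'' you ask for (ergodicity plus diagonalizability forces rich orbit configurations) is precisely what is missing, and until it is supplied the statement remains open.
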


One possible approach to Conjecture \ref{conj:symplectic_ergodic_weak} is to prove that the technical assumption ${\Sp(\cG) \otimes_\Z \Q} \supset \Sp^H(\cG \otimes_\Z \Q)$ in Theorem \ref{thm:NCtori_no_torsion} is always satisfied. 

\vspace{0.5cm}

We conclude this paper with an application of the results proven so far inspired by \cite{DMS}. In \lc, the quantization of Abelian Chern-Simons theory is interpreted as a functor 
$$\mathfrak{A}: \Man_2 \to\stgAlg $$
which assigns symplectic twisted group $*$-algebras to 3-dimensional manifolds of
the form $\R \times \Sigma$, where $\Sigma$ is a 2-dimensional oriented manifold.
This assignment is obtained by composing the functor $$ \mathfrak{S}:=( H_c^1(-,\Z),\sigma): \Man_2 \to \bP\bSymp_\T $$ 
which assign to every $\Sigma\in\Man_2$ its first (singular) homology group with compact support $H_c^1(\Sigma)$ endowed with a $\T$-valued pre-symplectic form $\sigma$, with the functor
$$ \mathfrak{CCR}: \bP\bSymp_\T \to \stgAlg$$
which assign to every $\Z$-pre-symplectic abelian group a symplectic twisted group algebra. We say that such a functor $\mathfrak{A}$ is a \emph{Chern-Simons functor}.
 
By functoriality, $\mathfrak{CCR}$ induces a representation of the group of orientation preserving diffeomorphisms Diff$^+ (\Sigma)$ of $\Sigma$, that is the group of automorphisms of $\Sigma$ in $\Man_2$, on $\mathfrak{A}(\Sigma)$ as a group of $*$-algebra automorphisms.
This representation can be related to a representation of the mapping class group and, for compact $\Sigma$, to a
representation of the discrete symplectic group $\Sp(2n,\Z)$. \\
For a quantum physical interpretation of quantum Abelian Chern-Simons theory it is necessary to choose for each $\Sigma\in\Man_2$ a state $\omega_\Sigma: \mathfrak{A}(\Sigma) \to \C$ on the \stg-algebra $\mathfrak{A}(\Sigma)$. Motivated by the functoriality of the association, it seems natural to demand that the family of states $\{\omega_\Sigma\}_{\Sigma\in\Man_2}$ is compatible with
the functor $\mathfrak{A}: \Man_2 \to \stgAlg$ in the sense that
$$\omega_{\Sigma'} \circ \mathfrak{A}(f) = \omega_\Sigma $$
for all $\Man_2$-morphisms $f : \Sigma \to \Sigma'$. Such compatible families of states are called \textit{natural states} on $\mathfrak{A}: \Man_2 \to \stgAlg.$
Even though the idea of natural states is very beautiful and appealing, there are hard obstructions to the existence of natural states.

Using the results in Theorem~\ref{thm:sum_up} we can now generalize the main result of \cite{DMS}. Before stating the theorem, let us given a definition.
\begin{defn}
We say that a Chern-Simons functor is \emph{irrational and not degenerate} if each $\mathfrak{A}(\Sigma)$ is irrational in the sense of Definition \ref{defn:irrational} and, for any $\Sigma$ such that $H^1_c(\Sigma;\Z)\simeq \Z^{2n}$, the symplectic form is \emph{non-degenerate.}
\end{defn}

\begin{thm}
 There exists no natural state for an irrational and non-degenerate Chern-Simons functor.
\end{thm}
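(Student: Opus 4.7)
The plan is to apply the uniqueness result of Theorem \ref{thm:sum_up} to the two-dimensional torus and then to exploit the functoriality of $\mathfrak{A}$ against two incompatible open embeddings in order to derive a contradiction.

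First I would focus on $\Sigma = T^2$. By hypothesis $H^1_c(T^2;\Z) \cong \Z^2$ carries an irrational and non-degenerate $\T$-valued symplectic form, and the mapping class group $\mathrm{MCG}(T^2) \cong \Sl(2,\Z) = \Sp(2,\Z)$ acts on $H^1_c(T^2;\Z)$ through its standard representation. Via the functor $\mathfrak{CCR}$ this promotes to a $*$-automorphism action of $\Sp(2,\Z)$ on $\mathfrak{A}(T^2)$. Applying the naturality condition to all self-diffeomorphisms of $T^2$ forces $\omega_{T^2}$ to be $\Sp(2,\Z)$-invariant. Since $(\Z^2, \sigma)$ is a finitely generated, irrational, non-degenerate $\T$-symplectic abelian group, Theorem \ref{thm:sum_up}(2) (in this rank two case it suffices to invoke Lemma \ref{lemma:NCtori_no_Q2}) implies that $\omega_{T^2}$ is the tracial state; in particular $\omega_{T^2}(h) = 0$ for every non-identity generator $h$ of $\mathfrak{A}(T^2)$.

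Next I would construct two open orientation-preserving embeddings $j_1, j_2: A \hookrightarrow T^2$ of the open annulus $A = S^1 \times (0,1)$: the embedding $j_1$ sends $A$ to a tubular neighbourhood of a small contractible loop in $T^2$, while $j_2$ sends $A$ to a tubular neighbourhood of a meridian circle representing a primitive class of $H_1(T^2;\Z)$. Since $H^1_c$ is covariantly functorial on open embeddings via extension by zero, and this operation corresponds through Poincar\'e duality to the ordinary pushforward on $H_1$, the generator $\gamma$ of $H^1_c(A;\Z) \cong \Z$ satisfies
\[
(j_1)_*(\gamma) = 0, \qquad (j_2)_*(\gamma) \neq 0,
\]
both viewed in $H^1_c(T^2;\Z)$.

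Finally, the naturality of the family $\{\omega_\Sigma\}$ yields on the one hand
\[
\omega_A(\gamma) \;=\; \omega_{T^2}\bigl(\mathfrak{A}(j_1)(\gamma)\bigr) \;=\; \omega_{T^2}(1) \;=\; 1,
\]
since the zero class in $H^1_c(T^2;\Z)$ corresponds to the unit of the \stg-algebra $\mathfrak{A}(T^2)$ and $\omega_{T^2}$ is normalized; on the other hand
\[
\omega_A(\gamma) \;=\; \omega_{T^2}\bigl(\mathfrak{A}(j_2)(\gamma)\bigr) \;=\; 0,
\]
because $(j_2)_*(\gamma)$ is a non-identity generator of $H^1_c(T^2;\Z)$ and $\omega_{T^2}$ is the tracial state. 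These two equalities are incompatible, ruling out the existence of any natural state. The main obstacle of this argument lies in confirming the precise functorial behaviour of $\mathfrak{S} = (H^1_c, \sigma)$ on morphisms of $\Man_2$: namely that $\mathfrak{A}(j)$ for an open embedding $j$ is induced by extension by zero on $H^1_c$, and that Poincar\'e duality translates this into the ordinary pushforward on $H_1$. Once this is granted, the rest of the argument is an elementary combination of uniqueness of the invariant state on $T^2$ with the topological fact that a closed loop in $T^2$ may be either null-homologous or a primitive element of $H_1$.
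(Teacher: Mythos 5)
Your proposal is correct and follows essentially the same route as the paper: both arguments pin down $\omega_{\T^2}$ as the tracial state via uniqueness of the invariant state on the irrational noncommutative torus (your Lemma \ref{lemma:NCtori_no_Q2}, the paper's Theorem \ref{thm:NCtori_no_torsion_CD}), and then play two embeddings of a cylinder/annulus against each other --- one killing the generator of $H^1_c$, the other sending it to a primitive class --- to force $1 = 0$. The only differences are cosmetic: the paper kills the generator by embedding the cylinder into $\bS^2$, where $H^1_c(\bS^2;\Z)=0$ forces $\omega_{\R\times\T}\equiv 1$, whereas you embed the annulus around a null-homotopic loop in $\T^2$ itself; moreover you spell out the $\Sp(2,\Z)$-invariance of $\omega_{\T^2}$ obtained from naturality under orientation-preserving diffeomorphisms, a step the paper's proof leaves implicit when invoking its uniqueness theorem.
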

\begin{proof}
Let us assume that there exists a natural state $\{\omega_{\Sigma}\}_{\Sigma\in\Man_2}$. Consider the $\Man_2$-diagram 
$$
 \bS^2 \stackrel{f_1}{\longleftarrow} \R\times \T\stackrel{f_2}{\longrightarrow} \T^2
$$
describing an orientation preserving open embedding of the cylinder $\R\times\T$ into the $2$-sphere $\bS^2$ and 
the $2$-torus $\T^2$. The Chern-Simons functor assigns $*$-homomorphisms
$$\mathfrak{A}(\bS^2) \stackrel{\mathfrak{A}(f_1)}{\longleftarrow} \mathfrak{A}(\R\times \T)
\stackrel{\mathfrak{A}(f_2)}{\longrightarrow} \mathfrak{A}(\T^2)$$
and the naturality of the state implies the condition
\begin{equation}\label{eqn:statecondition}
\omega_{\bS^2}^{}\circ \mathfrak{A}(f_1) = \omega_{\R\times\T}^{} = \omega_{\T^2}^{}\circ \mathfrak{A}(f_2)~.
\end{equation}
Because of $H^1_{c}(\bS^2;\Z)=0$, it follows that $\mathfrak{A}(\bS^2) \simeq \C$
and hence $\omega_{\bS^2}^{} = \Id_{\C}$ has to be the unique state on $\C$.
Using further that $H^1_c(\R\times \T) \simeq \Z$, it follows
by the first equality in \eqref{eqn:statecondition}
\begin{equation}\label{eqn:condition1}
\omega_{\R\times\T}^{}\big(n\big)= 1~,
\end{equation}
for all $n\in\Z$. By the non-degeneracy hypothesis on $\mathfrak{A}$ it follows that $(H^1_{c}(\T^2;\Z),\sigma_{\T^2})$ is isomorphic to the abelian group $\Z^2$ with an irrational $\T$-symplectic form, \ie $\mathfrak{A}(\T^2)$ is an algebraic irrational noncommutative torus.
We can choose $f_2$ such that the $\ast$-algebra homomorphism $\mathfrak{A}(f_2) : \mathfrak{A}(\R\times\T)\to \mathfrak{A}(\T^2)$ is given by $n \mapsto {(n,0)}$, for all $n\in \Z$.
As a consequence of \eqref{eqn:statecondition} and \eqref{eqn:condition1}, we obtain that
$$
\omega_{\T^2}^{}\big({(n,0)}\big) =1~,
$$
for all $n\in\Z$, which is not positive by Theorem \ref{thm:NCtori_no_torsion_CD} and hence not a state.
\end{proof}

\vspace{1cm}
{

}


\begin{thebibliography}{99}

 \bibitem{AS}
S.~Albeverio and A.~N.~Sengupta,
\textit{Complex phase space and Weyl’s commutation relations.} 	Expo. Math. \textbf{34}  (2016), 249-286.

 \bibitem{Albu1}
H.~Albuquerque and R.~C.~Krau\ss har, 
\textit{Multiplicative invariant lattices in $\R^n$ obtained by twisting of group algebras and some explicit characterizations.} J.\ Algebra \textbf{319} (2008), 1116-1131.

 \bibitem{Albu2}
H.~Albuquerque and S.~Majid, 
\textit{Quasialgebra structure of octonions.} J. Algebra \textbf{220} (1999), 188-224.

 \bibitem{Albu3}
H.~Albuquerque and S.~Majid, 
\textit{Clifford algebras obtained by twisting of group algebras.} J.\ Pure Appl.\ Algebra \textbf{171} (2002), 133-148.

 \bibitem{ergodicbook}
H.~Araki, C. C. Moore, S. Stratila and D. Voiculescu,
\textit{Operator Algebras and
their Connections with
Topology and Ergodic Theory.} 
Springer-Verlag
Berlin Heidelberg New York Tokyo (1983).

\bibitem{A}
C.~Asmuth, \textit{Weil Representations of Symplectic p-Adic Groups.} Amer. J. Math. \textbf{101}  (1979), 885-908.

\bibitem{Moore}
L.~Auslander and C.~C.~Moore, \textit{Unitary representations of solvable Lie groups.} Mem.\ Am.\ Math.\ Soc.\ \textbf{62}  (1966), 199.

\bibitem{Bae}
R.~Baer, \textit{Abelian groups without elements of finite order.} Duke Mathematical Journal, \textbf{3} (1937), 68-122.

\bibitem{Packer}
L.~Baggett and J.~Packer,
  \textit{The Primitive Ideal Space of Two-Step Nilpotent Group $C^*$-Algebras} J. Funct. Anal. \textbf{124} (1994), 389-426.
   
 \bibitem{BMP}
F.~Bambozzi, S. Murro and N.~Pinamonti,
\textit{Invariant states on noncommutative tori.} to appear on Int.\ Math.\ Res.\ Not.\	 (arXiv:1802.02487)

\bibitem{defquant}
F.~Bayen, M.~Flato, C.~Fronsdal, A.~Lichnerowicz, D.~Sternheimer, \textit{Deformation theory and quantization. I. Deformations of symplectic structures.}
Annals of Physics \textbf{111}  (1978),61-110.

\bibitem{BDH} 
M.~Benini, C.~Dappiaggi and T.~P.~Hack,
\textit{Quantum field theory on curved backgrounds -- a primer.}   Int.\ J.\ Mod.\ Phys.\ A {\bf 28}  (2013), 1330023.

 \bibitem{simogravity}
M.~Benini, C.~Dappiaggi and S.~Murro,
\textit{Radiative observables for linearized gravity on asymptotically flat spacetimes and their boundary induced states.} J.\ Math.\ Phys.\ \textbf{55}, 082301 (2014). 

\bibitem{OA1} 
O.~Bratteli and D.~W.~Robinson,
\textit{Operator Algebras and Quantum Statistical Mechanics I \& II.}   Springer  (1987).

\bibitem{OA2} 
B.~Blackadar, \textit{Operator Algebras: Theory of $C^*$-Algebras and von Neumann Algebras.} Springer  (2005).

\bibitem{AQFT1} 
R.~Brunetti, C. Dappiaggi, K. Fredenhagen and J. Yngvason, \textit{Advances in algebraic quantum field theory. } Springer  (2015).

\bibitem{Connes}
A.~Connes, \textit{Non-commutative differential geometry.} Publ. I.H.E.S  \textbf{62}  (1985), 41-144.


\bibitem{simo4}
C. Dappiaggi, F.~Finster, S.~Murro and E.~Radici,
{\it The Fermionic Signature Operator in De Sitter Spacetime,}  J.\ Math.\ Anal.\ Appl.\ \textbf{485}, 123808 (2020).


 \bibitem{DMS}
C.~Dappiaggi, S. Murro and A.~Schenkel,
\textit{Non-existence of natural states for Abelian Chern-Simons theory.} J.\ Geom.\ 
Phys.\ \textbf{116}  (2017), 119-123.  

 \bibitem{DegliEspositi}
M.~Degli Esposti,
\textit{Quantization of the orientation preserving automorphisms of the torus.} Ann.\ Inst.\ Poincar\'e \textbf{58}  (1993), 323-341.

 \bibitem{gerard} 
J.~Derezi\'nski, and C. G\'erard, \textit{Mathematics of quantization and quantum fields.} Cambridge University Press,  (2013).

 \bibitem{Dixmier1}
 J.~Dixmier, 
\textit{Sur les algèbres de Weyl II.}
Bull.\ Sci.\ Math.\  \textbf{94} (1970) 289-301. 

 \bibitem{Dixmier2}
J.~Dixmier, 
\textit{Algèbres enveloppantes.} Gauthier-Villars (1974).

\bibitem{dixmier}
J.~Dixmier, \textit{$C^*$-algebras.} North-Holland publishing company  (1977).

%\bibitem{DKS}
% S.~Doplicher, D.~Kastler and E.~St\o rmer,
%  \textit{Invariant States and Asymptotic Abelianness.}
%  J.\ Func.\ Anal.\  {\bf 3} (1969), 419-434.

 \bibitem{simo3}
 N.~Drago and S.~Murro,
  \textit{A new class of Fermionic Projectors: M{\o}ller operators and mass oscillation properties,}
  Lett.\ Math.\ Phys.\ \textbf{107}, 2433-2451 (2017).

\bibitem{OA3}
D.~E.~Evans and M.~Takesaki, 
\textit{Operator Algebras and Applications.} 
London Mathematical Society Lecture Note Series  (1989) .

\bibitem{ER}
S.~Echterhoff and J.~Rosenberg, \textit{Fine structure of the Mackey machine for actions of abelian groups with constant Mackey obstruction.}
Pacific J. Math.  \textbf{170}  (1995), 17-52.

\bibitem{FV} 
M. Fannes and A.~Verbeure,
\textit{On the Time Evolution Automorphisms
of the CCR-Algebra for Quantum Mechanics.} Commun.\ Math.\ Phys.\  {\bf 35}  (1974), 257-264.

  \bibitem{simo1}
F.~Finster, S.~Murro and C.~R\"oken,
{\it The Fermionic Projector in a Time-Dependent External Potential: Mass Oscillation Property and Hadamard States,}
J.\ Math.\ Phys.\  {\bf 57}, 072303 (2016).

\bibitem{simo2}
F.~Finster, S.~Murro and C.~R\"oken,
{\it The Fermionic Signature Operator and Quantum States in Rindler Space-Time,}
  J.\ Math.\ Anal.\ Appl.\  {\bf 454}, 385 (2017).
  
  
\bibitem{FR16}
  K.~Fredenhagen and K.~Rejzner,
 \textit{Quantum field theory on curved spacetimes: Axiomatic framework and examples.}    J.\ Math.\ Phys.\  {\bf 57}   (2016), 031101.

\bibitem{MurroGinoux}
N. Ginoux and S. Murro, \textit{On the Cauchy problem for Friedrichs systems on globally hyperbolic
manifolds with timelike boundary.} arXiv:2007.02544 [math.AP] (2020).

\bibitem{MurroGrosse}
N. Gro\ss e and S. Murro, \textit{The well-posedness of the Cauchy problem for the Dirac operator on globally hyperbolic manifolds with timelike boundary.} Documenta Math. \textbf{25} (2020), 737-765.

\bibitem{Hthm} 
R.~Haag, \textit{On quantum field theories.} Mat.\ fys\ \textbf{29}  (1955), 1-37.

\bibitem{AQFT2} 
R.~Haag, \textit{ Local quantum physics: Fields, particles, algebras.} Springer Science \& Business Media,  (2012).
 
 
\bibitem{HHW} 
R.~Haag, N.~M.~Hugenholtz and M.~Winnink, \textit{ On the Equilibrium States
in Quantum Statistical Mechanics.} Commun.\ Math.\ Phys.\textbf{5} (1967), 215-236.

\bibitem{ThomasAlex}
T.-P.~Hack and A.~Schenkel, 
  \textit{Linear bosonic and fermionic quantum gauge theories on curved spacetimes.}
Gen. Rel. Grav.   \textbf{45}   (2013),  877-910.
   
\bibitem{Hanna}
  K.~Hannabuss,
   \textit{Representations of nilpotent locally compact groups.} J. Funct. Anal. \textbf{34} (1979), 146-165.

\bibitem{HLS}
R.~H\o egh-Krohn, M.~B.~Landstad and E.~St\o rmer, 
\textit{Compact ergodic groups of automorphisms.} 
Annals of Math.\ \textbf{114} (1981), 75-86. 

%\bibitem{Hugenholtz} 
%N.~M.~Hugenholtz,\textit{ On the factor type of equilibrium states in quantum
%statistical mechanics.} Commun.\ Math.\ Phys.\  {\bf 6} (1967), 189-193.
%
% \bibitem{Kadison}
%R.~V.~Kadison ,
% \textit{States and representations.}
%Trans.\ Am.\ Math.\ Soc.\  {\bf 103} (1962), 304-319.
% 
%   \bibitem{KR}
%  D.~Kastler and D.~W.~Robinson,
%  \textit{Invariant states in statistical mechanics.}
%  Comm.\ Math.\ Phys.\  {\bf 3} (1966), 151-180.
%  
  \bibitem{Kleppner}
A.~Kleppner, 
\textit{Multipliers on Abelian Groups.}
Math.\ Ann. \textbf{158} (1965), 11-34.
  
     \bibitem{KovacsSzucs}
 J.~Kov\'acs and J.~Sz\"ucs, \textit{Ergodic type theorems in von Neumann algebras.} Acta\ Sci.\ Math.\ Szeged \textbf{27} (1966), 233-246. 
 
\bibitem{Kubo} 
R. Kubo, \textit{Statistical mechanical theory of irreversible processes I.}
J. Phys. Soc. Japan 12, 570 (1957).


\bibitem{Lance}
E.~C.~Lance,
\textit{Ergodie Theorems for Convex Sets
and Operator Algebras.} Inventiones math. 37, 201-214 (1976).
 
  \bibitem{LP}
 A. L\"udeking and D. Poguntke, \textit{Cocycles on abelian groups and primitive ideals in group C*-algebras of two-step nilpotent groups and connected Lie groups.} Journal of Lie Theory 4.1 (1994).
 
\bibitem{MSTV} 
J.~Manuceau, M.~Sirugue, D.~Testard and A.~Verbeure,
\textit{The Smallest $C^*$-Algebra for Canonical Commutations Relations.} Commun.\ Math.\ Phys.\  {\bf 32}  (1973), 231-243.


\bibitem{MaSc}
P. C. Martin and J. Schwinger,
\textit{Theory of many particle systems: I.} Phys. Rev. 115, 1342 (1959).


\bibitem{MT}
C.~M\oe glin and M.~Tadi\'c, \textit{Construction of discrete series for classical p-adic groups.} J. Amer. Math. Soc. \textbf{15}  (2002), 715-786.

 \bibitem{Moretti}
V.~Moretti,
\textit{Spectral theory and Quantum Mechanics - with an introduction to the Algebraic Formulation of Quantum Theories.} Springer-Verlag Mailand, (2012).

 \bibitem{MS}
M.~Moskowitz and R.~Sacksteder, 
\textit{An extension of a theorem of Hlawka.} Mathematika 56.2 (2010): 203-216.

\bibitem{M}
	A.~Moy, \textit{Representations of GSp(4) over a p-adic field: parts 1 and 2.} Compositio Math. \textbf{66}  (1988), 237-328.
	
	\bibitem{MurroVolpe}
	S. Murro and D. Volpe,
\textit{	Intertwining operators for symmetric hyperbolic systems on globally
hyperbolic manifolds.} to appear on Ann. Glob. Anal. Geom.  (arXiv:2004.03300).

  \bibitem{Naimark} 
M.~A.~Naimark, 
 \textit{Normed algebras.} Springer Science \& Business Media (2012).
 
  \bibitem{NesSto} 
 S. Neshveyev, E. Størmer, \textit{Dynamical entropy in operator algebras.} Vol. 50. Springer Science \& Business Media, 2006.
 
\bibitem{NM} M. Newman, \textit{Integral matrices.} Vol. 45. Academic Press, 1972.

%\bibitem{MvNthm}
%F.~J.~Murray and J.~von Neumann, \textit{On rings of operators.} Ann.\ of\ Math.\ \textbf{37} (1936), 116-229.

   \bibitem{OPK} 
 D.~Olesen, G.~K.~Pedersen and M.~Takesaki, \textit{Ergodic actions of compact abelian groups.} J.\ Operator\ Theory \textbf{3} (1980),  237-269.

\bibitem{RW}
I.~Raeburn and D.~P.~Williams,
\textit{Pull-Backs of $C^*$-Algebras and Crossed Products by Certain Diagonal Actions.} Tran.\ Am.\ Math.\ Soc.\ \textbf{287}  (1985),  755 - 777.

    \bibitem{Robinson1} 
  P.~L.~Robinson,
\textit{The structure of exponential Weyl algebras.}
J.\ Austral.\ Math.\ Soc.\ \textbf{55} (1993), 302-310.


      \bibitem{Robinson2} 
  P.~L.~Robinson,
\textit{Polarized states on Weyl algebras.}
Publ.\ RIMS,\ Kyoto\ Univ.\ \textbf{39} (2003), 415-434.

 
\bibitem{KO}
K. Sanders,
\textit{Thermal equilibrium states of a linear scalar
quantum field in stationary spacetimes.}
Int. J. Mod. Phys. A \textbf{28}, 1330010 (2013).
 
  \bibitem{SW1}
B.~Schumacher and M.~D.~Westmoreland,
\textit{Modal Quantum Theory.} Found.\ Phys.\  \textbf{42}  (2012), 918-925.

  \bibitem{SW2}
B.~Schumacher and M.~D.~Westmoreland,
\textit{Almost Quantum Theory.} In: Chiribella G., Spekkens R. (eds), \textit{Quantum Theory: Informational Foundations and Foils,} 45-81 Springer  (2015).
  
  
\bibitem{Stanek}  P. F. G. Stanek, \textit{Two element generation of the symplectic group.} Bull. Amer. Math. Soc. \textbf{67} (1961), 225-227.
  
\bibitem{stormer1} 
E.~St\o rmer, \textit{Large Groups of Automorphisms of $C^*$-Algebras.} Commun.\ Math.\ Phys.\  {\bf 5} (1967), 1-22.

  
\bibitem{stormer67} 
E.~St\o rmer, \textit{Types of von Neumann Algebras Associated with Extremal Invariant States.} Commun.\ Math.\ Phys.\  {\bf 6} (1967), 194-204.
 
 
\bibitem{StormerJFA} 
E.~St\o rmer,  
\textit{States and Invariant Maps of Operator Algebras} J.\ Func.\ Anal.\ \textbf{5} (1970), 44-65.

\bibitem{StormerACTA} 
E.~St\o rmer,  
\textit{Automorphisms and invariant states of operator algebras.} Acta Math. \textbf{127} (1971), 1-9.

\bibitem{Stormer} 
E.~St\o rmer,  
\textit{Spectra of ergodic transformation.} J.\ Func.\ Anal.\  {\bf 15} (1974), 202-215. 
  
  \bibitem{T}
M.~Tadi\'c, \textit{Representations of p-adic symplectic groups.} Compositio Math.\ \textbf{90}  (1994), 123-181.


% \bibitem{Tao}
% T.~Tao, 
% \textit{Topics in random matrix theory.}  American Mathematical Society, (2012). 

  
  \bibitem{SVNthm}
  J.~von Neumann,
\textit{Die Eindeutigkeit der Schr\"odingerschen Operatoren.}  Math.\ Ann.\ \textbf{104}  (1931), 570-578.

%  \bibitem{VNthm}
%  J.~von Neumann,
%\textit{On Rings of Operators. Reduction Theory.} Ann.\ of Math.\ \textbf{50}  (1949), 401-485.
  
   \bibitem{Zel} E. I.~Zelenov, \textit{Representations of commutations relations for p-adic systems of infinitely many degrees of freedom.} J. Math. Phys. \textbf{33} (1992), 178-188.
\end{thebibliography}
\end{document}